\definecolor{gry}{gray}{0.75}
 \def\@seccntformat#1{\csname the#1\endcsname.\ } 
\date{}
\newtheorem{theorem}{Theorem}
\newtheorem{lemma}{Lemma}
\newtheorem{proposition}{Proposition}
\newtheorem{corollary}{Corollary}
\theoremstyle{definition}
\newtheorem{problem}{Problem}
\newtheorem{remark}{Remark}
\title{An enumeration of 1-perfect ternary codes%
\thanks{This is the author's final version of the manuscript
 published in Discrete Mathematics 346(7), 2023, paper 113437, \url{https://doi.org/10.1016/j.disc.2023.113437}

The work of M. J. Shi is supported by the National Natural Science
Foundation of China (12071001); the work of
D. S. Krotov is supported within the framework of the state contract of the
Sobolev Institute of Mathematics (FWNF-2022-0017).
}}
\author{%
Minjia Shi%
\thanks{(a)
Key Laboratory of Intelligent Computing and Signal Processing, Ministry of Education, School of Mathematical Sciences, Anhui University,
Hefei, Anhui, China.
(b)
State Key Laboratory of Integrated Service Networks, Xidian University, Xi’an, 710071, China.
\href{mailto:smjwcl.good@163.com}{smjwcl.good@163.com}
}
\ and Denis S. Krotov%
\thanks{(c) Sobolev Institute of Mathematics, Novosibirsk 630090, Russia.
\href{mailto:krotov@math.nsc.ru}{krotov@math.nsc.ru}
}
}
\newcommand\vc[1]{\bar{#1}}
\newcommand\FF{\mathbb{F}}
\newcommand\Aut{\mathrm{Aut}}
\newcommand\MAut{\mathrm{MAut}}
\newcommand\PAut{\mathrm{PAut}}
\newcommand\GL[2][3]{\mathrm{GL}_{#2}(\FF_{#1})}
\begin{document}
\maketitle

\begin{abstract}
We study codes with parameters of the ternary Hamming $(n{=}(3^m-1)/2,3^{n-m},3)$ code, i.e., ternary $1$-perfect codes. The rank of the code is defined to be the dimension of its affine span. We characterize ternary $1$-perfect codes of rank $n-m+1$, count their number, and prove that all such codes can be obtained from each other by a sequence of two-coordinate switchings. We enumerate ternary $1$-perfect codes of length $13$ obtained by concatenation from codes of lengths $9$ and $4$; we find that there are $93241327$ equivalence classes of such codes.

Keywords: perfect codes, ternary codes, concatenation, switching.
\end{abstract}

\section{Introduction}

Perfect $1$-error-correcting $q$-ary codes are codes
with parameters of $q$-ary Hamming codes over the 
Galois field GF$(q)$ of order $q$,
which exist for every prime power $q$
and length $n$ of form $(q^m-1)/(q-1)$,
$m\in\{2,3,\ldots\}$.
Since the pioneer work of Vasil'ev on 
$1$-perfect binary codes~\cite{Vas:nongroup_perfect.de}
and its $q$-ary generalization 
by Sch{\"o}nheim~\cite{Schonheim68},
it is known that the Hamming code is not a unique
$1$-perfect code and the number of nonequivalent
$1$-perfect codes grows doubly exponentially in $n$
(at least $q^{q^{cn-o(n)}}$, 
where $c=\frac1q$ if $q=2,3$ 
 and 
$c\simeq \frac2q$ for large~$q$~\cite{HK:q-ary}).

Perfect codes, including non-binary ones, 
can be used in different applications,
for example, in 
steganography schemes, see e.g.~\cite{ZhangLi:2008}, \cite{RRR:2011}, \cite{KPCh:2019}.
As mentioned in \cite{ZhangLi:2008},
the possibility to choose a code from a large variety 
can increase security of steganographic systems,
adding additional difficulties to anyone who
wants to hack such a scheme. 
So, the study of nonlinear $1$-perfect codes,
especially classes of codes whose structure is well understood
(which allows to develop efficient decoding algorithms),
is important both from theoretical point of view and
for evaluating their potential use in applications.

The problem of characterization of the class of $1$-perfect 
codes, in any constructive terms, is far from being solved 
even for $q=2$. However, there are characterizations 
for $1$-perfect codes with some restrictions.
One of important restrictions is the restriction 
on the rank of the code. 
The rank is the dimension of the affine span of the code.
We say that a $1$-perfect code is of rank $+r$ if its rank is
$r$ greater than the dimension
of the Hamming code of the same parameters.
Binary $1$-perfect code of rank at most~$+1$ 
and of rank at most~$+2$
are characterized by Avgustinovich,
Heden, and Solov'eva in~\cite{AvgHedSol:class};
in the last case, the characterization 
is up to the characterization
of multiary quasigroups of order~$4$, 
which was completed 
later in~\cite{KroPot:4}.
In~\cite{HK:q-ary},
all $1$-perfect codes of rank less than
$n-1$ are proven to be decomposed into
some independent subsets, so-called $\mu$-components,
but the characterization of $\mu$-components
is as hard as of $1$-perfect codes in general.
However, in special cases 
(for example, for the case $q=2$ and rank~$+2$ 
considered in~\cite{AvgHedSol:class}),
the structure of $\mu$-components can be well understood,
and we use this decomposition
to characterize 
the set of ternary $1$-perfect codes of rank~$+1$,
which is the first main result of the current paper.

The second result is computational: we classify ternary concatenated $1$-perfect codes of length~$13$. 
It can be considered 
as a ternary analog of the result~\cite{Phelps:2000}
on the classification of binary concatenated 
extended $1$-perfect codes of length~$16$; 
however, in contrast to the results of~\cite{Phelps:2000},
the number $93241327$ of nonequivalent ternary perfect codes 
found is huge, and processing them
in a reasonable time (it took about thirty 
core-years)
required combining and development 
of classification methods used in~\cite{Phelps:2000}
for codes and by the second author 
for equitable partitions and orthogonal arrays~\cite{Kro:OA13}.
By now, all known computational enumerations of 
$1$-perfect codes were focused on the binary case.
In contrast to the $q$-ary $1$-perfect codes with $q>2$,
every binary $1$-perfect code has its extended
version obtained by appending 
the all-parity-check bit to every codeword.
Although this mapping is bijective,
enumerating $1$-perfect and
extended $1$-perfect codes up to equivalence
are different tasks.
Phelps~\cite{Phelps:2000}
enumerated partitions of
the binary Hamming space of dimension~$7$ 
into 
$1$-perfect codes and binary extended
$1$-perfect codes of length~$16$
obtained from such partitions by concatenation.
V.\,Zivoniev and D.\,Zinoviev 
enumerated binary $1$-perfect codes of
length~$15$ and rank~$13$ in~\cite{ZZ:2004}, 
extended binary $1$-perfect codes of
length~$16$ and ranks~$13$ and~$14$ 
in~\cite{ZZ:2002} and~\cite{ZZ:2006:l16r14}, 
respectively.
Finally, {\"O}sterg{\aa}rd and Pottonen
enumerated all binary $1$-perfect codes of length~$15$
and their extended versions in~\cite{OstPot:15};
in the subsequent paper~\cite{OPP:15}, 
different properties
of these codes were studied.
Note that, as was mentioned 
in the later researches \cite{ZZ:2006:l16r14}, \cite{OPP:15},
the numbers of nonequivalent codes
found in~\cite{Phelps:2000}, \cite{ZZ:2002}, \cite{ZZ:2004},
and~\cite{ZZ:2006:l16r14} contain mistakes;
however, methods
developed there are correct
and important for further research,
including our current study.

In contrast to the most of previous
results on non-binary $1$-perfect codes,
e.g.,
\cite{PheVil:2002:q},
\cite{Los06},
\cite{Mal2010:q},
\cite{HK:q-ary},
\cite{Romanov:2019},
in the current paper
we focus our efforts on the ternary case.
It should be noted that this case is of special
interest by the following theoretical reasons.
For~$q=3$,
as well as for~$q=2$, the group of isometries
of the Hamming space is a subgroup of the group
of the automorphisms of the corresponding
affine space over~GF$(q)$. We hence can ensure
that codes that are equivalent combinatorially 
(the equivalence is defined in Section~\ref{ss:equi})
are equivalent algebraically. In particular,
algebraic properties such as the rank, the kernel structure
(see Section~\ref{ss:gs}), 
maximal affine subspaces are invariant under equivalence.
This is not the case for~$q\ge 4$, where a linear code
can be equivalent to a code that linearly spans the whole space.
The last fact is used in~\cite{Dinitz:06} to construct 
special decompositions of a group 
(the additive group of the space) called tilings, 
with different parameters (of special interest are so-called
full-rank tilings, which can be constructed from $1$-perfect codes
spanning the space; the kernel size is also important for tilings). 
However, for~$q \le 3$,
by the reasons mentioned above,
we cannot construct different tilings from equivalent codes.
This motivates to study the connection between perfect codes
and tilings more closely for~$q=2$ and~$q=3$. 
In the binary case, 
most of related questions have been solved
\cite{CLVZ:tiling}, \cite{EV:98}, \cite{TraVar:2003}, \cite{OstVar:2004}, 
including the characterization of all admissible
rank--kernel dimension pairs 
for $1$-perfect codes~\cite{ASH:RankKernel}.
For $q=3$ 
(we mention important results~\cite{OstSza:2007}, 
\cite{Szabo:2008}, 
\cite{Krotov:3tiling}
focused on this case), many questions are open, and the table
of ranks and kernels for concatenated codes (Table~\ref{t:rk})
can be considered as a step in this direction.
\smallskip

The structure of the paper is as follows.
In Section~\ref{s:def}, we define the main concepts
and recall some facts we use in our study.

In~Section~\ref{s:+1}, we prove a characterization
theorem for $3$-ary $1$-perfect codes of rank~$+1$
(Theorem~\ref{th:q3r1}),
count their number (Theorem~\ref{th:all}), 
prove the connectedness 
of the set of such codes by mean of two-coordinate switching 
(Theorem~\ref{th:swi}),
and consider the structure 
of a $3$-ary $1$-perfect code~$C$ 
with kernel size~$|C|/3$ (Section~\ref{s:ker}).

In~Section~\ref{s:class} , we describe 
the computer-aided enumeration of 
the concatenated $3$-ary
$1$-perfect codes of length~$13$
(Theorem~\ref{th:p13})
and of auxiliary objects including the partitions 
of the space into $3$-ary $1$-perfect codes of length~$4$
(Theorem~\ref{th:p4})
and partitions of a $(9,3^8,2)_3$ MDS code into 
$(9,3^6,3)_3$ subcodes (Theorem~\ref{th:RM9}).
Enumerating the last partitions was the most 
resource-intensive step of the computing;
it is of independent interest because 
$(n=q^m,q^{n-m-1},q)_q$ subcodes of an
$(n,q^{n-1},2)_q$ MDS code form an interesting
class of completely regular codes,
which share with perfect codes some properties and 
constructing tools, see e.g.~\cite{Romanov:2022}.
Moreover, the obtained partitions can further be used for 
constructing $1$-perfect codes of any admissible length 
larger than~$13$
by generalized concatenated construction
(see~\cite{Zin1976:GCC} for the general approach)
as shown in~\cite{Romanov:concat}.
A database containing representatives of the equivalence
classes of the classified objects can be found at
\url{https://ieee-dataport.org/open-access/perfect-and-related-codes}%
~\cite{Perfect-related}.


\section{Preliminaries}\label{s:def}
In this section, we define main concepts and mention 
related facts important to our study.
\subsection{Graphs and spaces}\label{ss:gs}
The \emph{Hamming graph} $H(n,q)$ 
is a graph whose vertices 
are the words of length~$n$ in the alphabet 
$\{0,\ldots,q-1\}$, two vertices being adjacent
if they differ in exactly one symbol.
If $q$ is a prime power, the symbols of the alphabet are associated 
with the elements of the prime field
GF$(q)$, and the vertex set of
$H(n,q)$ forms an $n$-dimensional vector space
$\mathbb{F}_q^n$
over GF$(q)$ with the component-wise addition 
and multiplication by a constant.

The natural shortest-path distance in $H(n,q)$
coincides with the \emph{Hamming distance}, i.e.,
the distance between two words equals the number
of positions they differ. 
The \emph{weight} of a vertex~$\vc{x}$ is the distance
from~$\vc{x}$ to the all-zero word~$\vc{0}$.

A vertex set~$C$ in $H(n,q)$
is called a \emph{distance-$d$ code}, or an $(n,|C|,d)_q$
code, if there are no two different codewords in~$C$
with distance less than~$d$.
A code forming a linear subspace of~$\mathbb{F}_q^n$
is called \emph{linear}. 
The \emph{rank} of a code is the dimension of
its affine span (if the code contains the all-zero word, then, equivalently, the dimension of its linear span).
The \emph{kernel} of a code $C\subset \FF_q^n$ is the set 
$\{\vc{x}\in \FF_q^n:\ \alpha\vc{x}+C=C\ \forall \alpha\in\mathbb{F}_q \}$;
if $q$ is prime (in our case, $q=3$),
then
the kernel coincides with the set
$\{\vc{x}\in \FF_q^n:\  \vc{x}+C=C  \}$
of all periods of~$C$.

\subsection{Equivalence and automorphisms}\label{ss:equi}

The next group of definitions concerns different equivalences
and automorphisms of codes. 
We recall that every automorphism
of the graph
$H(n,q)$ can be uniquely represented as the composition 
of a coordinate permutation
$$ \pi: 
(c_0,\ldots,c_{n-1}) \to 
(c_{\pi^{-1}(0)},\ldots,c_{\pi^{-1}(n-1)})
$$
and an \emph{isotopy}
$\vc\theta = (\theta_0,\ldots,\theta_{n-1})$
that consists 
of $n$~permutations of the alphabet
$\{0,\ldots,q-1\}$, acting independently 
on the corresponding $n$ symbols of a word 
of length~$n$ over $\{0,\ldots,q-1\}$:
$$ \vc\theta: 
(c_0,\ldots,c_{n-1}) \to 
(\theta_{0}(c_{0}),\ldots,\theta_{n-1}(c_{n-1})).
$$ 
Two sets~$C$ and~$D$ of vertices of
$H(n,q)$ are said to be \emph{equivalent}
if there is an automorphism of~$H(n,q)$
that sends~$C$ to~$D$. 
The set of automorphism of $H(n,q)$
that send a~vertex set~$C$ to itself forms 
the \emph{automorphism group} $\Aut(C)$ of~$C$,
with composition in the role of the group operation.

Two sets~$C$ and~$D$ of vertices of $H(n,q)$ are
\emph{monomially equivalent}
if there is an automorphism of $H(n,q)$ 
that is 
at the same time an automorphism of the corresponding
vector space
and sends~$C$ to~$D$
(in the case $q=3$, every automorphism of $H(n,3)$
that fixes the all-zero word is 
an automorphism of the vector space).
Two sets~$C$ and~$D$ of vertices of $H(n,q)$ are 
\emph{permutably equivalent} if there is a permutation of 
coordinates that sends~$C$ to~$D$.
The \emph{monomial automorphism group} $\MAut(C)$
and
\emph{permutation automorphism group} $\PAut(C)$
are subgroups of $\Aut(C)$ that correspond to 
monomial and permutation equivalence, respectively.

\subsection{1-perfect, distance-2 MDS, and Reed--Muller-like codes}

A \emph{$1$-perfect code} is an independent 
set of vertices of $H(n,q)$ (or any other graph)
such that every non-code vertex is adjacent to exactly
one codeword.
If $q$ is a prime power, then a necessary and sufficient
condition for the existence of $1$-perfect codes is
$n=(q^m-1)/(q-1)$, $m\in\{1,2,\ldots\}$;
so, such codes are $(n,q^{n-m},3)_q$ codes. 
In particular, for every~$q$ and~$m$,
there is a unique (up to equivalence) linear $1$-perfect code,
called 
a \emph{Hamming code},
which has dimension~$n-m$, 
the order of the 
monomial automorphism group 
$$|\GL[q]{m}|=(q^{m}-1)(q^{m}-q)\ldots(q^{m}-q^{m-1}),$$
and a check matrix consisting of the maximum collection 
of mutually non-colinear columns 
of height~$m$ (recall that the rows of a \emph{check matrix} 
form a basis of the dual space of the linear code).
The Hamming code, obviously, has the minimum rank, $n-m$, among all 
$1$-perfect codes of the same parameters; 
thus,
we say that a $1$-perfect code is \emph{of rank $+r$}
if its rank is $(n-m)+r$.

A code with parameters $(n,q^{n-1},2)_q$ is called a 
\emph{distance-$2$ MDS code} 
(note that we do not require this code to be linear).
A function $f:\{0,\ldots,q-1\}^n \to \{0,\ldots,q-1\}$
such that its graph
$\{ (\vc{x},f(\vc{x})):\ \vc{x}\in \{0,\ldots,q-1\}^n \}$
is a distance-$2$ MDS code 
is called
an \emph{$n$-ary} ({\emph{multiary}) \emph{quasigroup} of order~$q$.

The third special kind of codes that plays a role
in our theory
is $(n=q^m,q^{n-m-1},3)_q$ codes that are subsets
of a distance-$2$ MDS code.
We call such codes
\emph{RM-like codes},
because the linear code of this kind is
$\mathcal{R}_q(qm-m-2,m)$,
a generalized Reed--Muller code
(see, e.g.,~\cite[\S5.4]{AssmKey92})
of order $(q-1)m-2$.
As follows from the following proposition, every RM-like
code is a maximum distance-$3$ subcode of
a distance-$2$ MDS code. 
\begin{proposition}\label{p:RMl}
If $C\subset M$, where $C$ is a RM-like code
and $M$ is a distance-$2$ MDS code,
then every vertex not in~$M$ is adjacent
to exactly one codeword of~$C$.
\end{proposition}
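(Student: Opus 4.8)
The plan is to prove this by an exact sphere-packing count. Write $n=q^m$; then $C$ has $q^{n-m-1}$ codewords and $M$ has $q^{n-1}$ codewords, so the set $\FF_q^n\setminus M$ of vertices outside $M$ has cardinality $q^n-q^{n-1}=q^{n-1}(q-1)$. The two distance hypotheses play complementary roles: the minimum distance $2$ of $M$ localizes the unit spheres of codewords of $C$, while the minimum distance $3$ of $C$ keeps those spheres disjoint, and the RM-like parameters make the counts match exactly.

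The steps I would carry out are the following. For $\vc c\in C$, let $S(\vc c)=\{\vc x\in\FF_q^n:\ d(\vc x,\vc c)=1\}$ be its unit sphere, of size $n(q-1)=q^m(q-1)$. First, since $\vc c\in C\subseteq M$ and $M$ has minimum distance $2$, no vertex at distance $1$ from $\vc c$ can lie in $M$, so $S(\vc c)\subseteq\FF_q^n\setminus M$. Second, if a vertex belonged to $S(\vc c_1)\cap S(\vc c_2)$ for distinct $\vc c_1,\vc c_2\in C$, then $d(\vc c_1,\vc c_2)\le 2<3$, contradicting the minimum distance of $C$; hence the spheres $S(\vc c)$ with $\vc c\in C$ are pairwise disjoint. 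Third, the disjoint union $\bigcup_{\vc c\in C}S(\vc c)$ is contained in $\FF_q^n\setminus M$ and has cardinality $|C|\cdot q^m(q-1)=q^{n-m-1}q^m(q-1)=q^{n-1}(q-1)$, which equals $|\FF_q^n\setminus M|$; therefore this containment is an equality and the union is in fact a partition of $\FF_q^n\setminus M$. Rephrasing the last statement, every vertex outside $M$ lies in exactly one sphere $S(\vc c)$, i.e., is adjacent to precisely one codeword of $C$, as claimed.

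I do not expect a genuine obstacle here: the argument is elementary, and the only point that needs care is the cardinality bookkeeping, which hinges on the identity $|C|\cdot n=q^{n-1}$ (equivalently $q^{n-m-1}\cdot q^m=q^{n-1}$) built into the definition of an RM-like code, together with attributing each distance hypothesis to the correct step --- distance $2$ of $M$ to the ``spheres avoid $M$'' step and distance $3$ of $C$ to the ``spheres are disjoint'' step.
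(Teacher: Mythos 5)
Your proof is correct and follows essentially the same sphere-counting argument as the paper: minimum distance $3$ of $C$ makes the unit spheres disjoint, and the cardinality identity $|C|\cdot n\cdot(q-1)=q^n-q^{n-1}$ forces them to cover all of $\FF_q^n\setminus M$. You are slightly more explicit than the paper in noting that the minimum distance $2$ of $M$ places each sphere inside $\FF_q^n\setminus M$, a step the paper leaves implicit; this is a harmless and arguably welcome addition.
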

\begin{proof}
Since the minimum distance of~$C$ is~$3$, we see that every
vertex is adjacent to at most one codeword of~$C$.
The number of vertices adjacent to a codeword of~$C$
is $|C|\cdot n \cdot(q-1)$, i.e., $q^n-q^{n-1}$,
which is exactly the number of vertices not in~$M$.
\end{proof}
By an \emph{RM-like partition}, we mean a partition 
of a distance-$2$ MDS code into RM-like codes.

\subsection{Concatenation}
For any two words or symbols $\vc{x}$ and~$\vc{y}$,
by~$\vc{x}\vc{y}$ we denote their concatenation.
For a code~$C$ and a symbol or word~$\vc{x}$, we denote
$C\vc{x}=\{\vc{c}\vc{x}:\ \vc{c}\in C\}$
and
$\vc{x}C=\{\vc{x}\vc{c}:\ \vc{c}\in C\}$;
similarly,
$CD=\{\vc{x}\vc{y}:\ \vc{x}\in C,\ \vc{y}\in D\}$
for two codes~$C$ and~$D$.

Next, we define concatenated codes.
The following construction of $q$-ary $1$-perfect codes
suggested by Romanov~\cite{Romanov:2019}
is a $q$-ary generalization of the Solov'eva--Phelps
construction~\cite{Phelps:83,Sol:81} 
for binary $1$-perfect codes.

\begin{lemma}[Romanov \cite{Romanov:2019}]\label{l:rom}
Assume
$n=(q^m-1)/(q-1)$, $n'=(q^{m-1}-1)/(q-1)$, $n''=q^{m-1}$.
Let $(P_0, \ldots ,P_{n''-1})$ be a partition
of the Hamming space $H(n',q)$ into
$1$-perfect $(n', q^{n'-(m-1)}, 3)_q$ codes.
Let $(C_0, \ldots ,C_{n''-1})$ be a partition
of an $(n'',q^{n''-1},2)_q$ MDS code into $n''$
  codes with parameters
  $(n'', q^{n''-m}, 3)_q$.
And let $\tau$ be a permutation of $\{0, \ldots ,{n''-1}\}$.
Then the code
\begin{equation}
 \label{eq:P}
P= \bigcup_{i=0}^{q^{m-1}-1} C_i P_{\tau(i)} 
\end{equation}
is a $1$-perfect $(n, q^{n-m}, 3)_q$ code.
\end{lemma}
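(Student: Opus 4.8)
The plan is to verify directly that $P$ as defined in \eqref{eq:P} has the right cardinality and the right covering/packing properties. First I would count: each $C_i$ is an $(n'',q^{n''-m},3)_q$ code and each $P_{\tau(i)}$ is an $(n',q^{n'-(m-1)},3)_q$ code, so $|C_iP_{\tau(i)}| = q^{n''-m}\cdot q^{n'-(m-1)} = q^{n''+n'-m-m+1}$. Using $n=n'+n''$ (which follows from $(q^m-1)/(q-1) = (q^{m-1}-1)/(q-1) + q^{m-1}$) and summing over the $n''=q^{m-1}$ values of $i$, the total is $q^{m-1}\cdot q^{n-2m+1} = q^{n-m}$, the desired size. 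So it suffices to prove minimum distance $\ge 3$; since a distance-$3$ code of size $q^{n-m}$ in $H(n,q)$ is automatically $1$-perfect by the sphere-packing bound, that finishes the proof.

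For the distance bound, take two distinct codewords $\vc{a}\vc{u}$ and $\vc{b}\vc{v}$ of $P$, where $\vc{a},\vc{b}$ lie in the first $n''$ coordinates and $\vc{u},\vc{v}$ in the last $n'$. Say $\vc{a}\vc{u}\in C_iP_{\tau(i)}$ and $\vc{b}\vc{v}\in C_jP_{\tau(j)}$. I would split into two cases. If $i=j$, then $\vc{a},\vc{b}\in C_i$ and $\vc{u},\vc{v}\in P_{\tau(i)}$; if $\vc{a}=\vc{b}$ then $\vc{u}\ne\vc{v}$ so $d(\vc{u},\vc{v})\ge 3$ as $P_{\tau(i)}$ is distance-$3$; otherwise $d(\vc{a},\vc{b})\ge 3$ likewise, and in either subcase $d(\vc{a}\vc{u},\vc{b}\vc{v})\ge 3$. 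If $i\ne j$, then $\vc{a}$ and $\vc{b}$ lie in distinct codes $C_i,C_j$ of the partition of the $(n'',q^{n''-1},2)_q$ MDS code; since both belong to the MDS code, which has minimum distance $2$, and they are distinct, $d(\vc{a},\vc{b})\ge 2$. It remains to show that when $d(\vc{a},\vc{b})=2$ we still get $d(\vc{u},\vc{v})\ge 1$, i.e. $\vc{u}\ne\vc{v}$; but if $\vc{u}=\vc{v}$ then $\vc{u}\in P_{\tau(i)}\cap P_{\tau(j)}=\varnothing$ since $\tau$ is a permutation and $\tau(i)\ne\tau(j)$, a contradiction. Hence $d(\vc{a}\vc{u},\vc{b}\vc{v})\ge 3$ in all cases.

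The only genuinely delicate point is the $i\ne j$ case: it is precisely here that one needs \emph{both} structural hypotheses working together — the $C_i$ come from a common distance-$2$ MDS code (giving $d(\vc{a},\vc{b})\ge 2$ rather than merely $\ge 1$), and the $P_k$'s form a \emph{partition} with $\tau$ a bijection (forcing $\vc{u}\ne\vc{v}$ to contribute the extra $1$). I would make sure to state explicitly that $C_i\subseteq M$ for all $i$ with $M$ the fixed MDS code, since the lemma's phrasing ``a partition of an $(n'',q^{n''-1},2)_q$ MDS code'' is what licenses this. Everything else is routine bookkeeping with concatenated weights, $d(\vc{a}\vc{u},\vc{b}\vc{v}) = d(\vc{a},\vc{b}) + d(\vc{u},\vc{v})$, and the identity $n=n'+n''$.
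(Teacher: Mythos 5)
Your proof is correct. Note that the paper itself gives no proof of this lemma: it is imported verbatim from Romanov's paper \cite{Romanov:2019}, so there is nothing internal to compare against. Your argument is the standard one for such concatenation constructions --- count the codewords (using $n=n'+n''$ and the disjointness of the $C_i$ to justify summing the cardinalities), verify minimum distance $3$ by the case split on whether the two codewords come from the same block $C_iP_{\tau(i)}$, and invoke the sphere-packing bound to upgrade ``distance-$3$ code of size $q^{n-m}$'' to ``$1$-perfect.'' You also correctly isolate the one non-routine point, namely that in the $i\ne j$ case the distance-$2$ MDS ambient code contributes $d(\vc{a},\vc{b})\ge 2$ while the disjointness of $P_{\tau(i)}$ and $P_{\tau(j)}$ contributes the remaining $1$.
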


The role of the permutation $\tau$ in the construction above
is technical: since it just changes the order of the codes~$P_i$,
we will not lose generality by assuming that $\tau$ is identity.
However, as in Section~\ref{s:class} 
we work with concrete representatives 
of equivalence classes of partitions,
it is convenient to represent the reordering of the codes
in a partition explicitly, as a permutation~$\tau$.

The codes representable in the form~\eqref{eq:P} are called 
\emph{concatenated}. 
We note that this property is not invariant 
under equivalence because it depends on the 
order of coordinates.

\begin{remark}
Another $q$-ary generalization of the Solov'eva--Phelps
construction was proposed in~\cite{Mollard:84:une}
(see also~\cite[Theorem~11.4.5]{CHLL}); 
it can be regarded as a special case of the construction
in Lemma~\ref{l:rom} with a partition $(C_0,\ldots,C_{n''-1})$
explicitly constructed from a partition 
with the same parameters as $(P_0,\ldots,P_{n''-1})$.
In the case $n''=9$, considered in Section~\ref{s:class},
there are $65436$ nonequivalent partitions $(C_0,\ldots,C_{n''-1})$
of a~$(9,3^8,2)_3$ code into $(9,3^6,3)_3$ codes (see Theorem~\ref{th:RM9}), 
while the number of nonequivalent partitions $(P_0,\ldots,P_{n''-1})$
of~$\FF_3^4$ into $(4,9,3)_3$ codes is only~$2$ (see Theorem~\ref{th:p4}).
This shows that the construction in~\cite{Romanov:2019}
(Lemma~\ref{l:rom} above)
gives more codes than the one in~\cite{Mollard:84:une}.
\end{remark}
\begin{remark}
 The construction in Lemma~\ref{l:rom},
as well as its binary case~\cite{Phelps:83,Sol:81}, 
is very close to the construction 
of Heden~\cite{Heden:77}.
Namely, if we restrict the choice of the partition
$(P_0, \ldots ,P_{n''-1})$ by a partition into cosets
of the same $1$-perfect code and treat the partition
$(C_0, \ldots ,C_{n''-1})$ as a code in the 
mixed-alphabet Hamming space over $\FF_q^{n''} \times\{0,\ldots,n''\}$, 
then the following lemma turns into a special case
of~\cite[Theorem~1]{Heden:77}. 
Finally, we note that the construction can be treated
in terms of the generalized concatenation construction~\cite{Zin1976:GCC}.
\end{remark}

\section{Codes of rank +1}\label{s:+1}
In this section, we characterize 
the $3$-ary $1$-perfect codes of rank~$+1$, 
count the number of different such codes,
and discuss the possibility 
of switching between such codes.
\subsection{Characterization}\label{s:char}
We will use the result of~\cite{HK:q-ary},
which states that a code, depending on its rank,
is the union of one or more independently defined subsets,
called $\vc\mu$-components. Below we will show that in the case
of ternary $1$-perfect codes of rank~$+1$, 
such $\vc\mu$-components are in one-to-one
correspondence with multiary quasigroups of order~$3$.

\begin{lemma}[{\cite[Th.~2.1]{HK:q-ary}, $r=m-1$, $s=1$}] \label{l:HK}
 Let $C$ be a $q$-ary $1$-perfect code of length
 $n=(q^m-1)/(q-1)$ of rank at most~$+1$ and 
 $C^\star$ be the $q$-ary Hamming code
 of length 
 $n'=(q^{m-1}-1)/(q-1)$.
 Then for some translation vector $\vc{v}$ and monomial transformation~$\psi$, it holds
 $$\psi(C+\vc{v}) = \bigcup_{\vc{\mu}\in C^\star} K_{\vc\mu},$$
 where
 $$
   K_{\vc\mu} =
 \big\{(x_0,x_1,\ldots,x_{n-1}): \
 \vc\sigma(x_0, \ldots ,x_{n-2})=\vc\mu,\ \
 x_{n-1} = \lambda_{\vc\mu}(x_0,\ldots,x_{n-2}) \big\},
 $$ 
$$
\vc\sigma(x_0, \ldots ,x_{n-2})
=\bigg(
\sum_{i=0}^{q-1} x_i,  
\sum_{i=q}^{2q-1} x_i, 
\ \ldots ,
\!\!\sum_{i=n-1-q}^{n-2} \!\!\!\!\! x_i 
\bigg),
$$
for some 
$\{0,\ldots,q-1\}$-valued
functions~$\lambda_{\vc\mu}$,
$\vc\mu \in C^*$, defined on 
$$\{(x_0, \ldots, x_{n-2}):\ 
  \vc\sigma(x_0, \ldots ,x_{n-2})=\vc\mu \} $$
and satisfying
\begin{equation}\label{eq:la}
d(\vc{x}_*,\vc{y}_*) = 2 \quad \Longrightarrow \quad
\lambda_{\mu}(\vc{x}_*) \ne \lambda_{\mu}(\vc{y}_*).
\end{equation}
\end{lemma}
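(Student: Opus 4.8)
The statement is the specialization $r=m-1$, $s=1$ of the $\vc\mu$-component decomposition of~\cite{HK:q-ary}, so one option is simply to invoke \cite[Theorem~2.1]{HK:q-ary} and translate notation; below I describe the self-contained argument one obtains, to show where the content sits. Fix a check matrix $H^\star$ of $C^\star$. After a translation I may assume $\vc 0\in C$, so that the rank of $C$ equals $\dim\langle C\rangle$. First I would dispose of the rank-$+0$ case: then $|\langle C\rangle|=|C|$, hence $C=\langle C\rangle$ is linear and so, by uniqueness, a monomial image of the length-$n$ Hamming code; writing that code in the coordinates of the statement — group the coordinates $0,\dots,n-2$ into $n'$ blocks of size $q$ matched with the $q$-fold repeated columns of $H^\star$, with $x_{n-1}$ corresponding to the unique remaining column — exhibits it as $\bigcup_{\vc\mu\in C^\star}K_{\vc\mu}$ with all $\lambda_{\vc\mu}$ the restrictions of one linear function, for which \eqref{eq:la} follows from the distance-$3$ property of the Hamming code. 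So from now on I would assume the rank is $n-m+1$, i.e.\ $\dim\langle C\rangle^{\perp}=m-1$.

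The main step, and the one I expect to be the real obstacle, is a normalization: $\langle C\rangle^{\perp}$ is monomially equivalent to the $(m-1)$-dimensional code generated by the rows of the $(m-1)\times n$ matrix whose columns are the $n'$ pairwise non-collinear directions of $\FF_q^{m-1}$, each taken with multiplicity $q$, followed by one zero column; equivalently, after a monomial map $\psi$ and a relabelling of coordinates, $\langle C\rangle=D_0:=\{\vc x\in\FF_q^n:\ H^\star\vc\sigma(x_0,\dots,x_{n-2})=\vc 0\}$, with $\vc\sigma$ the block-sum map of the statement. This is exactly \cite[Theorem~2.1]{HK:q-ary} for $r=m-1$, $s=1$; there it is proved by inspecting the partial syndromes of $C$ with respect to a sub-check-matrix of the length-$n$ Hamming code, $1$-perfectness (and not just the distance-$3$ property — already the elementary observation that no coordinate can be constant on $C$ rules out weight-one words in $\langle C\rangle^{\perp}$) forcing both the zero column and the $q$-fold repetition of the remaining $n'$ column directions. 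I would apply $\psi$, absorb the induced translation into $\vc v$, and keep the name $C$, so that now $C\subseteq\langle C\rangle=D_0$.

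The rest is bookkeeping. Since $C^\star=\{\vc\mu:\ H^\star\vc\mu=\vc 0\}$, we have $D_0=\bigsqcup_{\vc\mu\in C^\star}D_{\vc\mu}$ with $D_{\vc\mu}=\{\vc x:\ \vc\sigma(x_0,\dots,x_{n-2})=\vc\mu\}$; as $\vc\sigma\colon\FF_q^{n-1}\to\FF_q^{n'}$ is linear and onto, each set $S_{\vc\mu}:=\{\vc x_*:\ \vc\sigma(\vc x_*)=\vc\mu\}$ has $q^{n-1-n'}=q^{n''-1}$ elements and $D_{\vc\mu}$ consists of the words $(\vc x_*,a)$ with $\vc x_*\in S_{\vc\mu}$ and $a\in\FF_q$. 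Because $C$ has distance $3$, the projection $C\cap D_{\vc\mu}\to S_{\vc\mu}$ forgetting $x_{n-1}$ is injective, so $|C\cap D_{\vc\mu}|\le q^{n''-1}$; summing over the $|C^\star|=q^{n'-(m-1)}$ slices and using $|C|=q^{n-m}=q^{n'-(m-1)}\cdot q^{n''-1}$ forces equality in every slice, hence each of these projections is a bijection and determines a unique function $\lambda_{\vc\mu}$ on $S_{\vc\mu}$ with $(\vc x_*,\lambda_{\vc\mu}(\vc x_*))\in C$; thus $C=\bigcup_{\vc\mu\in C^\star}K_{\vc\mu}$. Finally, for \eqref{eq:la}: if $\vc x_*,\vc y_*\in S_{\vc\mu}$ satisfy $d(\vc x_*,\vc y_*)=2$, then $(\vc x_*,\lambda_{\vc\mu}(\vc x_*))$ and $(\vc y_*,\lambda_{\vc\mu}(\vc y_*))$ are codewords at Hamming distance $2$ or $3$, and distance $3$ forces $\lambda_{\vc\mu}(\vc x_*)\neq\lambda_{\vc\mu}(\vc y_*)$. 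I would also note that no further condition is needed: distinct words of a single $S_{\vc\mu}$ automatically differ in at least two coordinates, and two words from slices with $\vc\mu\neq\vc\nu$ automatically lie at distance at least $3$, since a difference of weight at most $2$ would make $\vc\mu$ and $\vc\nu$ differ in at most two coordinates, impossible in the distance-$3$ code $C^\star$.
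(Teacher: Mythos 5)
The paper offers no proof of this lemma at all --- it is imported verbatim as the specialization $r=m-1$, $s=1$ of \cite[Theorem~2.1]{HK:q-ary} --- and your primary route, invoking that theorem and translating notation, is exactly what the paper does. Your supplementary self-contained sketch is also sound: you correctly localize the only nontrivial content in the normalization of $\langle C\rangle^{\perp}$ to the $q$-fold-repeated-columns-plus-zero-column form (which you rightly delegate back to the cited reference rather than reproving), and the remaining slice-counting and distance-$3$ bookkeeping, including the verification that condition~\eqref{eq:la} is exactly what distance $3$ imposes within a slice, checks out.
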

In the ternary case, the equation 
 $\vc\sigma(x_0, \ldots ,x_{n-1})=\vc\mu$ can be expressed as follows, 
 where $\vc\mu = (\mu_0, \ldots ,\mu_{n'-1})$:
\begin{equation}\label{eq:xmu} 
 x_2=-x_0-x_1+\mu_0,
 \quad 
 x_5=-x_3-x_4+\mu_1,\ \ldots, 
 \quad 
 x_{n-2}=-x_{n-4}-x_{n-3}+\mu_{n'-1}.
\end{equation}

\begin{lemma}\label{l:q3}
If the hypothesis and the conclusion
of~Lemma~\ref{l:HK} hold with $q=3$,
then 
\begin{equation}\label{eq:la'} 
\lambda_{\vc{\mu}}(x_0, \ldots , x_{n-2})
=
\lambda'_{\vc{\mu}}(x_1-x_0, x_4-x_3, \ldots , x_{n-3}-x_{n-4})
\end{equation}
for some $n'$-ary quasigroup $\lambda'_{\vc{\mu}}$
of order~$3$, where $n'=\frac{n-1}3$. 
Moreover, if $\lambda'_{\vc{\mu}}$ is an arbitrary
$\frac{n-1}3$-ary quasigroup
of order~$3$ and $\lambda_{\vc{\mu}}$ is defined by~\eqref{eq:la'}
on any values of arguments satisfying~\eqref{eq:xmu},
then $\lambda_{\vc{\mu}}$ satisfies~\eqref{eq:la}. 
\end{lemma}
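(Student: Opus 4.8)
The plan is to unravel the block structure of the domain of $\lambda_{\vc\mu}$ imposed by \eqref{eq:xmu} and then reduce the whole statement to one small fact about the affine plane $\mathrm{AG}(2,3)$. By \eqref{eq:xmu} the domain of $\lambda_{\vc\mu}$ splits into $n'$ blocks of three consecutive coordinates; the $j$-th block is $(x_{3j},x_{3j+1},x_{3j+2})$ with $x_{3j+2}=-x_{3j}-x_{3j+1}+\mu_j$, so it is free in its first two coordinates $a:=x_{3j}$, $b:=x_{3j+1}$, which run over $\FF_3^2$. Two distinct domain points that agree outside block $j$ differ there in at least two coordinates (changing just $a$ or just $b$ also changes $x_{3j+2}$), so any two domain points at Hamming distance $2$ differ inside a single block and there in exactly two positions. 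Inside a block, a quick count in $\FF_3$ shows that two distinct points are at distance $2$ exactly when they agree in one of $x_{3j},x_{3j+1},x_{3j+2}$ (equivalently, when one of $a$, $b$, $a+b$ coincides) and at distance $3$ exactly when $b-a$ coincides. On the nine block points, these four ``coincidence'' relations are precisely the four parallel classes of lines of $\mathrm{AG}(2,3)$, and ``distance $2$ inside the block'' means collinear in one of the first three classes.

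For the forward direction, fix $\vc\mu$, a block $j$, and all coordinates outside it. By the previous paragraph, \eqref{eq:la} forces the restriction of $\lambda_{\vc\mu}$ to the nine block points to be injective, hence bijective onto $\FF_3$, on every line of the three ``coincidence'' classes; therefore each fibre $\lambda_{\vc\mu}^{-1}(v)$ is a three-point set meeting every line of those three classes exactly once. The \emph{key step} is the elementary observation that a three-point transversal to three of the four parallel classes of $\mathrm{AG}(2,3)$ is a line of the fourth class: using transversality to the ``$a$ constant'' and ``$b$ constant'' classes, write such a set as $\{(a,\sigma(a)):a\in\FF_3\}$ with $\sigma$ a permutation of $\FF_3$; transversality to the ``$a+b$ constant'' class then says $a\mapsto a+\sigma(a)$ is also a permutation, which over $\FF_3$ is possible only for $\sigma(a)=a+c$, so the set is the line $\{b-a=c\}$. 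Hence $\lambda_{\vc\mu}$ on block $j$ depends only on $x_{3j+1}-x_{3j}$ and is a bijection in that value. Doing this for every block (and composing the block-wise modifications) shows that $\lambda_{\vc\mu}$ depends only on the tuple $(x_1-x_0,x_4-x_3,\ldots,x_{n-3}-x_{n-4})$, which defines $\lambda'_{\vc\mu}$, and that $\lambda'_{\vc\mu}$ is a bijection in each argument with the others fixed; this is exactly the condition that the graph of $\lambda'_{\vc\mu}$ be a distance-$2$ MDS code, i.e., that $\lambda'_{\vc\mu}$ be an $n'$-ary quasigroup of order $3$.

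For the converse, let $\lambda'_{\vc\mu}$ be an arbitrary $n'$-ary quasigroup of order $3$ and define $\lambda_{\vc\mu}$ by \eqref{eq:la'} on the domain cut out by \eqref{eq:xmu}. If $\vc x_*,\vc y_*$ lie in this domain with $d(\vc x_*,\vc y_*)=2$, then by the first paragraph they differ inside a single block $j$ and there at distance $2$; hence $x_{3i+1}-x_{3i}=y_{3i+1}-y_{3i}$ for $i\ne j$, while these differences are unequal for $i=j$ (this last point uses the computation from the first paragraph that distinct block points at distance $2$ have different values of $b-a$). Thus the argument tuples of $\lambda'_{\vc\mu}$ at $\vc x_*$ and $\vc y_*$ differ in exactly one position, so the quasigroup property gives $\lambda'_{\vc\mu}$ distinct values there, i.e., $\lambda_{\vc\mu}(\vc x_*)\ne\lambda_{\vc\mu}(\vc y_*)$, which is \eqref{eq:la}.

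The only genuine obstacle is the $\mathrm{AG}(2,3)$ transversal fact used in the forward direction; everything else is bookkeeping with the block decomposition dictated by \eqref{eq:xmu} and with the translation between Hamming distances on a block and collinearity in the four parallel classes of the affine plane.
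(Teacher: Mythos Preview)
Your argument is correct. Both you and the paper exploit the block structure from \eqref{eq:xmu} and conclude that $\lambda_{\vc\mu}$ depends on each block only through $x_{3j+1}-x_{3j}$, but the key step is executed differently. The paper shows directly that $\lambda_{\vc\mu}(\vc x)=\lambda_{\vc\mu}(\vc x+\vc e_{111})$, where $\vc e_{ijk}=(i,j,k,0,\ldots,0)$: the triples $\{\vc x,\vc x+\vc e_{012},\vc x+\vc e_{021}\}$ and $\{\vc x+\vc e_{111},\vc x+\vc e_{012},\vc x+\vc e_{021}\}$ are each pairwise at distance~$2$, so by \eqref{eq:la} $\lambda_{\vc\mu}$ exhausts $\{0,1,2\}$ on each, and the shared pair forces equality at the remaining points. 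You instead view the nine block points as $\mathrm{AG}(2,3)$ and argue that each fibre of $\lambda_{\vc\mu}$, being a $3$-point transversal to three of the four parallel classes, must be a line of the fourth. Your route is more geometric and delivers the quasigroup property of $\lambda'_{\vc\mu}$ as an immediate byproduct (bijectivity in each $b-a$ is automatic once the fibres are identified as the fourth parallel class); the paper's is a shorter direct computation but then dispatches the quasigroup check with a terse ``by the definition''. The converse direction is handled essentially the same way in both proofs.
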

\begin{proof}
 The second claim is straightforward
 from the definition
 of multiary quasigroups.
 Let us prove the first one. Any tuple
 $(x_0, \ldots, x_{n-2})$ satisfying~\eqref{eq:xmu}
 has the form
 \begin{multline*}
  (x_0, \ldots, x_{n-2}) = 
 (
 x_0,x_0+z_0,x_0-z_0+\mu_0,\\
 x_3,x_3+z_1,x_3-z_1+\mu_1,\\
  \ldots,\\
 x_{n-4},x_{n-4}+z_{n'-1},x_{n-4}-z_{n'-1}+\mu_{n'-1}
  ), 
 \end{multline*}
  where $z_i=x_{3i+1}-x_{3i}$, $i=0, \ldots , n'-1$. 
  So, 
  $$
  \lambda_{\vc\mu}(x_0, \ldots, x_{n-2}) = 
  \lambda''_{\vc\mu}(x_0,x_3,\ldots, x_{n-4},z_0,z_1,\ldots,z_{n'-1})
  $$
  for some function $\lambda''_{\vc\mu}$. Let us show that 
  $\lambda''_{\vc\mu}$ does not depend on $x_0$, $x_3$, \ldots, $x_{n-4}$.
  If $\vc{x}=(x_0, \ldots, x_{n-2})$ satisfies~\eqref{eq:xmu},
  then 
  $\vc{x}+\vc{e}_{012}$, $\vc{x}+\vc{e}_{021}$, 
  and 
  $\vc{x}+\vc{e}_{111}$, 
  where 
  $\vc{e}_{ijk}=(i,j,k,0, \ldots ,0)$, 
  also satisfy~\eqref{eq:xmu}.
  Since $\vc{x}$, $\vc{x}+\vc{e}_{012}$, and $\vc{x}+\vc{e}_{021}$ 
  are at mutual distance~$2$ from each other,
  we see from~\eqref{eq:la} that 
  $\{ \lambda_{\vc\mu}(\vc{x}), 
      \lambda_{\vc\mu}(\vc{x}+\vc{e}_{012}), 
      \lambda_{\vc\mu}(\vc{x}+\vc{e}_{021}) \}
      = \{ 0,1,2 \}$.
Similarly, 
  $\{ \lambda_{\vc\mu}(\vc{x}+\vc{e}_{111}), 
      \lambda_{\vc\mu}(\vc{x}+\vc{e}_{012}), 
      \lambda_{\vc\mu}(\vc{x}+\vc{e}_{021}) \}
      = \{ 0,1,2 \}$. 
Therefore, 
$\lambda_{\vc\mu}(\vc{x})=\lambda_{\vc\mu}(\vc{x}+\vc{e}_{111})$
and, in particular,
$\lambda''_{\vc\mu}(x_0,x_3,\ldots, x_{n-4},z_0,z_1,\ldots,z_{n'-1})$
does not depend on~$x_0$.
Similarly, it does not depend on $x_3$, \ldots, $x_{n-4}$,
and 
 $$ \lambda''_{\vc\mu}(x_0,x_3,\ldots, x_{n-4},z_0,z_1,\ldots,z_{n'-1})
 = \lambda'_{\vc\mu}(z_0,z_1,\ldots,z_{n'-1})$$
 for some $\lambda'_{\vc\mu}$, which is an $n'$-ary quasigroup, by the definition.
\end{proof}

Summarizing Lemmas~\ref{l:HK} and~\ref{l:q3}, we obtain the following.

\begin{theorem} \label{th:q3r1}
 Let $C$ be a $3$-ary $1$-perfect code of length
 $n=(3^m-1)/2$ of rank at most~$+1$ and 
 $C^\star$ be the $3$-ary Hamming code
 of length 
 $n'=(3^{m-1}-1)/2$.
 Then for some automorphism~$\psi$ of~$H(n,3)$,
 it holds
 $$\psi(C) = \bigcup_{\vc{\mu}\in C^\star} K_{\vc\mu},$$
 where
 \begin{multline*}
 K_{\vc\mu} =
 \big\{(x_0,x_1,\ldots,x_{n-1}): \
 x_2=\mu_0-x_0-x_1,\ 
 x_5=\mu_1-x_3-x_4,\ \ldots,\\
 x_{n-2}=\mu_{(n-4)/3}-x_{n-4}-x_{n-3},\\
 x_{n-1} = \lambda_{\vc\mu}(x_1-x_0,x_4-x_3,\ldots,x_{n-3}-x_{n-4}) \big\}
 \end{multline*} 
 for some $(n-1)/3$-ary 
 quasigroup~$\lambda_{\vc\mu}$ of order~$3$,
 $\vc\mu\in C^\star$.
\end{theorem}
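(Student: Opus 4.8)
The plan is to obtain Theorem~\ref{th:q3r1} simply by specializing Lemma~\ref{l:HK} to $q=3$ and then feeding the resulting functions $\lambda_{\vc\mu}$ into Lemma~\ref{l:q3}. First I would invoke Lemma~\ref{l:HK}: since $C$ has rank at most $+1$, there are a translation vector $\vc v$ and a monomial transformation $\psi_0$ with $\psi_0(C+\vc v)=\bigcup_{\vc\mu\in C^\star}K_{\vc\mu}$, where each $K_{\vc\mu}$ is cut out by the linear conditions $\vc\sigma(x_0,\ldots,x_{n-2})=\vc\mu$ together with $x_{n-1}=\lambda_{\vc\mu}(x_0,\ldots,x_{n-2})$ for functions $\lambda_{\vc\mu}$ satisfying~\eqref{eq:la}. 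The map $\vc x\mapsto\vc x+\vc v$ is an automorphism of $H(n,3)$, and a monomial transformation is an automorphism of $H(n,3)$ as well; hence the composition $\psi\colon\vc x\mapsto\psi_0(\vc x+\vc v)$ is an automorphism of $H(n,3)$ with $\psi(C)=\bigcup_{\vc\mu\in C^\star}K_{\vc\mu}$. This disposes of the automorphism asked for in the statement.

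Next I would rewrite the two defining ingredients of $K_{\vc\mu}$ in the ternary setting. The condition $\vc\sigma(x_0,\ldots,x_{n-2})=\vc\mu$ is, block by block of three coordinates, $x_{3i}+x_{3i+1}+x_{3i+2}=\mu_i$ for $i=0,\ldots,n'-1$ with $n'=(n-1)/3$; solving for $x_{3i+2}$ gives exactly the system~\eqref{eq:xmu}, which is the first batch of equations written for $K_{\vc\mu}$ in the statement (recall $\mu_{n'-1}=\mu_{(n-4)/3}$). For the last coordinate I would apply Lemma~\ref{l:q3}: every $\lambda_{\vc\mu}$ produced above satisfies~\eqref{eq:la} on the arguments satisfying~\eqref{eq:xmu}, so by~\eqref{eq:la'} it factors through the differences $z_i=x_{3i+1}-x_{3i}$, namely $\lambda_{\vc\mu}(x_0,\ldots,x_{n-2})=\lambda'_{\vc\mu}(x_1-x_0,x_4-x_3,\ldots,x_{n-3}-x_{n-4})$ for some $(n-1)/3$-ary quasigroup $\lambda'_{\vc\mu}$ of order $3$. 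Substituting this into $x_{n-1}=\lambda_{\vc\mu}(x_0,\ldots,x_{n-2})$ yields the last equation of $K_{\vc\mu}$ in the statement, and renaming $\lambda'_{\vc\mu}$ back to $\lambda_{\vc\mu}$ finishes the argument.

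I do not expect a genuine obstacle: all the substantive work is already in Lemmas~\ref{l:HK} and~\ref{l:q3}, and what remains is bookkeeping — matching index ranges, and observing that a translation composed with a monomial map is still an automorphism of $H(n,3)$ (which rests on the characteristic-$3$ fact, recalled in Section~\ref{ss:equi}, that any automorphism of $H(n,3)$ fixing $\vc 0$ is linear, hence monomial maps are automorphisms). The only point to keep in mind is that Lemma~\ref{l:q3} is applied to each $\vc\mu\in C^\star$ separately, producing a possibly different quasigroup $\lambda_{\vc\mu}$ for each $\vc\mu$, with no compatibility relations required among them — which is precisely why the final description is as clean as stated.
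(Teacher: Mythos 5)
Your proposal is correct and follows the paper's own route exactly: the paper derives Theorem~\ref{th:q3r1} by ``summarizing'' Lemmas~\ref{l:HK} and~\ref{l:q3}, which is precisely your combination of the $\vc\mu$-component decomposition, the observation that a translation composed with a monomial map is an automorphism of $H(n,3)$, and the factorization of each $\lambda_{\vc\mu}$ through the differences $x_{3i+1}-x_{3i}$. Your write-up merely makes explicit the bookkeeping the paper leaves implicit.
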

In contrast to multiary quasigroups of higher orders,
all $t$-ary quasigroups of order~$3$ 
(and the corresponding $3$-ary distance-$2$ MDS codes)
are affine:
\begin{proposition}[{\cite[Corollary~13.25, Exercise~13.11]{LaywineMullen}}]\label{p:3aff}
 There are exactly $2\cdot 3^t$ $t$-ary quasigroups of order~$3$.
 Each of them has the form
 \begin{equation}\label{eq:3aff}
   f(x_0,\ldots,x_{t-1}) = a_0x_0 + \ldots + a_{t-1}x_{t-1}+a
 \end{equation}
 for some $a_0$, \ldots, $a_{t-1}$ from~$\{1,2\}$ 
 and $a$ from~$\{0,1,2\}$.
 \end{proposition} 
 So, the characterization
 of $3$-ary $1$-perfect codes of rank at most~$+1$
 in Theorem~\ref{th:q3r1} is constructive.

\begin{corollary}\label{c:r-k}
 The dimension of the kernel of a $3$-ary $1$-perfect code~$C$
 of length~$n$ and rank~$+1$
 is at least $(n-1)/3$.
\end{corollary}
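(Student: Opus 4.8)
The plan is to use the structural description of $C$ given by Theorem~\ref{th:q3r1} together with the fact, noted immediately after that theorem, that every $t$-ary quasigroup of order $3$ is affine. By applying the automorphism $\psi$, which does not change the dimension of the kernel, we may assume $C = \bigcup_{\vc\mu\in C^\star} K_{\vc\mu}$ in the explicit coordinate form of Theorem~\ref{th:q3r1}. The idea is to exhibit a linear subspace of $\FF_3^n$ of dimension $(n-1)/3$ all of whose vectors are periods of $C$; this immediately gives the claimed lower bound on the kernel dimension.

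First I would fix, for each $\vc\mu\in C^\star$, the affine representation $\lambda_{\vc\mu}(y_0,\ldots,y_{n'-1}) = a^{\vc\mu}_0 y_0 + \cdots + a^{\vc\mu}_{n'-1} y_{n'-1} + a^{\vc\mu}$ with coefficients in $\{1,2\}$, where $n' = (n-1)/3$ and $y_j = x_{3j+1}-x_{3j}$. Next I would look for translation vectors $\vc t$ supported on the coordinates so that adding $\vc t$ preserves each $K_{\vc\mu}$ simultaneously. A vector $\vc t = (t_0,\ldots,t_{n-1})$ preserves the membership conditions $x_{3j+2}=\mu_j-x_{3j}-x_{3j+1}$ exactly when $t_{3j+2} = -t_{3j}-t_{3j+1}$ for every $j$, and preserves the last coordinate condition when $t_{n-1} = \sum_j a^{\vc\mu}_j (t_{3j+1}-t_{3j})$ for every $\vc\mu$. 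The cleanest way to satisfy the second family of equations for all $\vc\mu$ at once is to force $t_{3j+1} = t_{3j}$ for every block $j$; then the right-hand side is $0$ regardless of $\vc\mu$, so we also need $t_{n-1}=0$, and from the first family $t_{3j+2} = -2t_{3j} = t_{3j}$. Thus the vectors of the form $t_{3j}=t_{3j+1}=t_{3j+2}=s_j$ for arbitrary $s_0,\ldots,s_{n'-1}\in\FF_3$ and $t_{n-1}=0$ form a linear subspace $L$ of dimension $n'=(n-1)/3$, and every vector of $L$ is a period of each $K_{\vc\mu}$ and hence of $C$.

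Finally I would assemble these observations: $L\subseteq \ker(\psi(C))$, so $\dim\ker(\psi(C))\ge (n-1)/3$, and since $\psi$ is an automorphism of $H(n,3)$ (indeed a monomial transformation after an appropriate translation), it preserves the kernel dimension, giving $\dim\ker(C)\ge(n-1)/3$ as well. The one point that requires a little care — and is really the only thing beyond routine verification — is checking that adding such a $\vc t$ genuinely maps $K_{\vc\mu}$ to itself rather than to some $K_{\vc\mu'}$ with $\vc\mu'\neq\vc\mu$; this follows precisely because we arranged $t_{3j+2}+t_{3j}+t_{3j+1}=0$ for each $j$, so the syndrome-type values $\mu_j$ are unchanged. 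I expect no genuine obstacle here: the affinity of order-$3$ quasigroups makes the whole computation linear, and the main (modest) task is just organizing the two families of linear constraints and noticing that the diagonal choice $t_{3j}=t_{3j+1}=t_{3j+2}$ resolves them uniformly over all $\vc\mu$.
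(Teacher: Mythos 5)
Your proposal is correct and follows essentially the same route as the paper: both arguments reduce to exhibiting the $(n-1)/3$ independent kernel vectors $(1,1,1,0,\ldots,0),\ (0,0,0,1,1,1,0,\ldots,0),\ \ldots$, which preserve each $K_{\vc\mu}$ because they leave the block sums $x_{3j}+x_{3j+1}+x_{3j+2}$ and the differences $x_{3j+1}-x_{3j}$ (hence the value of $\lambda_{\vc\mu}$) unchanged. The only cosmetic difference is that you invoke the affinity of order-$3$ quasigroups, which is not actually needed once you set $t_{3j}=t_{3j+1}=t_{3j+2}$, since then the arguments of $\lambda_{\vc\mu}$ are untouched regardless of its form.
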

\begin{proof}
Without loss of generality, we can assume that
the conclusion of Theorem~\ref{th:q3r1}
holds with the identity~$\psi$.
From the proof of Lemma~\ref{l:q3}, we
see that $(1,1,1,0, \ldots, 0)$ is in the kernel
of each $K_{\vc\mu}$, $\vc\mu \in C^*$
and hence belongs to the kernel of~$C$.
Similarly, the kernel contains
$(0,0,0,1,1,1,0, \ldots, 0)$, \ldots,  $(0, \ldots, 0,1,1,1,0)$.
\end{proof}
As we will see in Section~\ref{s:class} (Table~\ref{t:rk}),
the bound is tight for $n=13$: there are 
concatenated $(13,3^{10},3)_3$-codes
of rank~$+1$ with kernel of size~$3^4$.
The smallest automorphism group of such concatenated codes,
however, is twice larger,~$162$.
We have analyzed $10000$ random $(13,3^{10},3)_3$-codes of rank~$+1$
and have not found a code with
automorphism group of order~$81$ (or of any other order that does not occur among concatenated codes of rank~$+1$).
The most typical values of the order are 
$162$ ($52.2\%$ of the cases), 
$243$ ($29.7\%$), 
$486$ ($14.8\%$), 
and~$729$ ($2.8\%$)
(the dimension of the kernel is 
$4$ in $ 95.6\% $,
$5$ in $ 4.3\% $,
$6$ in $ 0.05\% $ 
of the cases).
We conjecture that
this is a small-length phenomena,
and as $n$ grows, 
almost all $3$-ary $1$-perfect 
codes of length~$n$ and rank~$+1$
have exactly~$3^{\frac{n-1}3}$ automorphisms.

\subsection{The number of rank +1 codes}\label{ss:no}

\begin{lemma}\label{l:fixed}
In $H(n=\frac{3^m-1}2,3)$,
 the number of $1$-perfect codes 
 of rank $+1$ with the same affine span is
 \begin{equation}\label{eq:N'}
  N'(n)= (3\cdot 2^{\frac{n-1}3})^{3^{\frac{n-1}3 - m+1}} 
 - {6^{\frac{n-1}3}}\cdot{3^{-m+2}}.
 \end{equation}
\end{lemma}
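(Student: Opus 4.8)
The plan is to use the characterization from Theorem~\ref{th:q3r1}. Fix an affine span; without loss of generality we may assume it corresponds to the identity $\psi$, so every $1$-perfect code $C$ of rank $+1$ with this span is of the form $\bigcup_{\vc\mu\in C^\star} K_{\vc\mu}$, where each $K_{\vc\mu}$ is determined by a choice of $(n-1)/3$-ary quasigroup $\lambda_{\vc\mu}$ of order $3$. Conversely, by the second claim of Lemma~\ref{l:q3}, any such choice of quasigroups (one for each $\vc\mu\in C^\star$) yields a valid code. So the first step is to count the number of tuples $(\lambda_{\vc\mu})_{\vc\mu\in C^\star}$ of $t$-ary quasigroups of order $3$, where $t=(n-1)/3$, and then subtract those tuples whose union has rank strictly less than $+1$ (i.e.\ rank $0$, the Hamming-code rank — the span would then be too small, so these tuples do not actually realize the fixed span).

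First I would count quasigroups. Since every $t$-ary quasigroup of order $3$ is affine, $f(x_0,\ldots,x_{t-1})=a_0x_0+\cdots+a_{t-1}x_{t-1}+a$ with each $a_i\in\{1,2\}$ and $a\in\{0,1,2\}$, there are exactly $3\cdot 2^{t}$ of them. The Hamming code $C^\star$ has length $n'=(3^{m-1}-1)/2$ and dimension $n'-(m-1)$, hence $|C^\star|=3^{\,n'-m+1}$. Now note $n'=t$ (indeed $t=(n-1)/3=(3^m-1)/6=(3^{m-1}-1)/2=n'$), so $|C^\star|=3^{\,t-m+1}=3^{\frac{n-1}3-m+1}$. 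Therefore the total number of tuples of quasigroups is $(3\cdot 2^{t})^{3^{t-m+1}}$, which is exactly the first term of~\eqref{eq:N'}. The remaining task is to identify and count the tuples that produce a code of rank $0$ rather than $+1$.

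For the correction term, I would argue that the union $\bigcup_{\vc\mu} K_{\vc\mu}$ has rank $0$ (equals a coset of the Hamming code of length $n$, equivalently is an affine subcode) precisely when the affine map $\vc\mu\mapsto(\text{the affine data of }\lambda_{\vc\mu})$ is itself ``linear'' in a suitable sense — concretely, when $\lambda_{\vc\mu}$ can be chosen of the form $b_0(x_1-x_0)+\cdots+b_{t-1}(x_{t-1}-x_{t-2})+(\text{linear functional of }\vc\mu)+\text{const}$, i.e.\ the coefficient vector $(b_0,\ldots,b_{t-1})\in\{1,2\}^t$ is the same for all $\vc\mu$ and the additive constant depends affinely on $\vc\mu$. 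Counting these: there are $2^{t}$ choices of common coefficient vector, and the number of affine maps $C^\star\to\FF_3$ (the allowed dependence of the constant on $\vc\mu$) is $3^{(m-1)+1}=3^{m}$ divided appropriately — more carefully, an affine functional on the $(t-m+1)$-dimensional space $C^\star$ is determined by $3\cdot 3^{\,t-m+1}$ data but must be consistent, giving $3^{(t-m+1)+1}$... this is where care is needed. Matching the claimed second term $6^{t}\cdot 3^{-m+2}=2^{t}\cdot 3^{\,t-m+2}$ tells me the count of rank-$0$ tuples should be $2^{t}\cdot 3^{\,t-m+2}$, i.e.\ $2^{t}$ coefficient vectors times $3^{\,t-m+2}$ affine functionals on $C^\star$ (an affine functional on a $(t-m+1)$-dimensional affine space over $\FF_3$ is given by a linear part in the $(t-m+1)$-dimensional dual plus a constant, hence $3^{(t-m+1)+1}=3^{\,t-m+2}$ of them).

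The main obstacle is the second step: proving rigorously that the union has rank $0$ \emph{if and only if} the tuple $(\lambda_{\vc\mu})$ has the special ``globally affine'' form described, and that distinct such forms give distinct codes so that there is no over- or under-counting. For the ``if'' direction one exhibits an explicit larger kernel / checks the affine span shrinks; for ``only if'' one must show that if two of the $\lambda_{\vc\mu}$ have different coefficient vectors $(b_i)$, or if the constants fail to depend affinely on $\vc\mu$, then the code genuinely spans the $+1$-dimensional affine span. I expect this to follow by taking differences of codewords across different $K_{\vc\mu}$ components and using~\eqref{eq:xmu}; identifying exactly which vectors appear in the linear span is the delicate bookkeeping. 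Once that equivalence and the injectivity of the parametrization of rank-$0$ tuples are established, the count $N'(n)=(3\cdot 2^{t})^{3^{t-m+1}}-2^{t}\cdot 3^{\,t-m+2}$ with $t=(n-1)/3$ is immediate, which is~\eqref{eq:N'}.
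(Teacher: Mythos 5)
Your plan coincides with the paper's proof on the first term: both count the codes sharing the span $S$ as tuples $(\lambda_{\vc\mu})_{\vc\mu\in C^\star}$ of $t$-ary quasigroups of order $3$, $t=(n-1)/3=n'$, one chosen independently for each $\vc\mu$, giving $(3\cdot 2^{t})^{|C^\star|}$ with $|C^\star|=3^{t-m+1}$. For the correction term you take a genuinely different route. The paper never characterizes which tuples degenerate; it counts the rank-$0$ codes directly as cosets of Hamming codes contained in $S$: a check matrix of $S$ is obtained from that of $C^\star$ by tripling each column and appending a zero column, and completing it to a Hamming check matrix amounts to choosing one extra row making all columns pairwise non-colinear --- $6^{t}$ choices, identified modulo the $3^{m-1}$ row operations, times $3$ cosets each, i.e.\ $6^{t}/3^{m-2}$. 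Your count of ``globally affine'' tuples, $2^{t}\cdot 3^{t-m+2}$, agrees with this: a permutation $(c_0,c_1,c_2)$ of $(0,1,2)$ in the completing row corresponds bijectively to a slope in $\{1,2\}$ together with a coefficient of $\mu_j$, and two coefficient vectors induce the same functional on $C^\star$ exactly when they differ by a dual codeword, which accounts for the factor $3^{t}/3^{m-1}$ and, with the free constant, gives your $3^{t-m+2}$ affine functionals. What your version buys is an explicit description of the degenerate quasigroup tuples; what it costs is precisely the obstacle you flag yourself: the equivalence ``rank $0$ if and only if globally affine'' and the injectivity of that parametrization are asserted but not proved, and without them the subtraction is not justified. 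That equivalence is true and provable along the lines you sketch (taking differences of codewords across distinct $K_{\vc\mu}$ and tracking the resulting linear span), but it is real work that the paper's check-matrix count avoids entirely; so as it stands your argument is a correct and consistent plan with one substantive step left open rather than a complete proof.
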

\begin{proof}
 Assume without loss of generality that 
 one of the $1$-perfect codes of rank~$+1$
 satisfies the conclusion
 of Theorem~\ref{th:q3r1} with the identity~$\psi$.
 Denote by~$S$ its affine span; since $\psi$ is identity,
 $S$ is linear.
 Then the other $1$-perfect codes of rank~$+1$
 with affine span~$S$
 also satisfy the conclusion
 of Theorem~\ref{th:q3r1} with the identity~$\psi$
 and the same $C^\star$. The number of codes that
 satisfy the conclusion
 of Theorem~\ref{th:q3r1} with the identity $\psi$
 is $Q_{(n-1)/3}\cdot |C^{\star}|$, where $Q_t=3\cdot 2^{t}$
 is the number of $t$-ary quasigroups of order~$3$ 
 (Proposition~\ref{p:3aff})
 and $|C^{\star}|=3^{(n-1)/3 - m+1}$.
 However, some of these codes are of rank~$+0$,
 and it remains to find the number of such codes, 
 subsets of~$S$ that are cosets of Hamming codes.
 
 We first count the number of Hamming codes, 
 subsets of~$S$.
 Let $H^*$ be a check matrix
 of the Hamming code~$C^*$. 
 It consists of~$(n-1)/3$ 
 mutually non-colinear
 columns of height~$m-1$. It is straightforward
 that a check matrix~$H$ of~$S$ can be constructed 
 by repeating each column of~$H^*$ three times
 and adding one all-zero column.
 To complete~$H$ to a check matrix of a Hamming code,
 we need to add one row that makes all columns
 mutually non-colinear. There are $6^{(n-1)/3}$
 of ways to do so, assuming without loss of generality 
 that the last symbol is~$1$.
 Since adding to the last row 
 a linear combination of the first $m-1$ rows
 does not change the linear span of the rows,
 we have $6^{(n-1)/3}/3^{m-1}$ different 
 Hamming subcodes of $S$. Each of them has $3$ cosets
 in $S$, so the total number of cosets is
 $6^{(n-1)/3}/3^{m-2}$.
\end{proof}

\begin{theorem}\label{th:all}
The number of $1$-perfect codes 
 of rank $+1$ in $H\big(n=\frac{(3^m-1)}2,3\big)$ is
 $$
 \frac{ n!\cdot 6^n }{ 
 |\GL[3]{m{-}1}| \cdot 6^{\frac{n-1}3}
   \cdot 3^{n-m+1} }
   \cdot N'(n),
 $$
 where $N'(n)$ is from~\eqref{eq:N'}.
\end{theorem}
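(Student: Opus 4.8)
The plan is to count rank-$+1$ codes by grouping them according to their affine span. By Lemma~\ref{l:fixed}, every affine subspace that occurs as the span of some rank-$+1$ code is the span of exactly $N'(n)$ such codes, so the number sought equals $A\cdot N'(n)$, where $A$ is the number of distinct affine spans; the problem thus reduces to computing $A$. To this end I would first show that these affine spans form a single orbit under $\Aut(H(n,3))$. Given a rank-$+1$ code $C$, Theorem~\ref{th:q3r1} provides $\psi\in\Aut(H(n,3))$ with $\psi(C)=\bigcup_{\vc\mu\in C^\star}K_{\vc\mu}$; since $\psi(C)$ still has rank exactly $+1$, its affine span is forced, independently of the quasigroups $\lambda_{\vc\mu}$, to be the fixed linear code
$$S_0=\bigl\{\vc x\in\FF_3^n:\ (x_0{+}x_1{+}x_2,\;x_3{+}x_4{+}x_5,\;\ldots,\;x_{n-4}{+}x_{n-3}{+}x_{n-2})\in C^\star\bigr\}$$
(the coordinate $x_{n-1}$ being left free), so the affine span of $C$ is $\psi^{-1}(S_0)$. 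Hence all affine spans lie in the $\Aut(H(n,3))$-orbit of $S_0$, and conversely every element of that orbit is such a span, since $N'(n)\ge1$. By the orbit--stabilizer theorem,
$$A=\frac{|\Aut(H(n,3))|}{|\mathrm{Stab}(S_0)|}=\frac{n!\,6^{n}}{|\mathrm{Stab}(S_0)|}.$$

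The principal step is then to compute $|\mathrm{Stab}(S_0)|$, the order of the setwise stabilizer of $S_0$ in $\Aut(H(n,3))$. Because $S_0$ is linear and, for $q=3$, every automorphism of $H(n,3)$ fixing $\vc0$ is monomial, each $g\in\mathrm{Stab}(S_0)$ factors uniquely as the translation by $g(\vc0)\in S_0$ followed by a monomial automorphism preserving $S_0$; thus $|\mathrm{Stab}(S_0)|=|S_0|\cdot|\MAut(S_0)|=3^{n-m+1}\cdot|\MAut(S_0)|$. To evaluate $\MAut(S_0)$ I would read off the intrinsic structure of $S_0$: the coordinate $n-1$ is the unique index $i$ with $\vc e_i\in S_0$, so it is fixed by every monomial automorphism (and at most rescaled); the remaining $n-1$ coordinates split into the $(n-1)/3$ triples $\{3k,\,3k{+}1,\,3k{+}2\}$, which are recovered from $S_0$ as the supports of its weight-$2$ words --- these being exactly the $\pm(\vc e_i-\vc e_j)$ with $i,j$ in a common triple --- so $\MAut(S_0)$ permutes the triples; inside a triple the three coordinates may be permuted arbitrarily, giving a normal subgroup of order $(3!)^{(n-1)/3}=6^{(n-1)/3}$; and, passing to the quotient of $\FF_3^{n-1}$ by the kernel of the block-sum map, the induced action on $C^\star$ realizes precisely $\MAut(C^\star)$, whose order is $|\GL_3(m-1)|$. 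Assembling these contributions --- with due care for the coordinate-wise rescalings, including the one on the distinguished coordinate --- gives $|\MAut(S_0)|$, hence $|\mathrm{Stab}(S_0)|=|\GL_3(m-1)|\cdot 6^{(n-1)/3}\cdot 3^{n-m+1}$, and substitution into $A\cdot N'(n)$ produces the claimed formula.

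The hard part is exactly this stabilizer computation, specifically the verification that a monomial automorphism of $S_0$ can do nothing more than permute the triples, permute and rescale coordinates within triples, and rescale the distinguished coordinate. One has to check that the rescalings inside any single triple are forced to be uniform (by applying the automorphism to a weight-$2$ word of $S_0$ supported in that triple), that the induced rescalings on $C^\star$ must belong to $\MAut(C^\star)$, and that every element of $\MAut(C^\star)$ does lift --- equivalently, that the natural surjection onto $\MAut(C^\star)$ has kernel exactly the group $(S_3)^{(n-1)/3}$ of within-triple permutations --- all while tracking the coordinate rescalings carefully so that the final constant is right. The remaining ingredients (the appeal to Lemma~\ref{l:fixed}, the orbit--stabilizer reduction, and the arithmetic simplification) are routine.
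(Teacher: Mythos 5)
Your proposal follows the paper's proof essentially verbatim: partition the codes by affine span, invoke Lemma~\ref{l:fixed} for the factor $N'(n)$, observe via Theorem~\ref{th:q3r1} that all spans form a single orbit of one linear subspace $S_0$, and compute the orbit length by orbit--stabilizer with $|\Aut(S_0)|=|S_0|\cdot|\MAut(S_0)|$. The only non-routine ingredient in either argument is the value of $|\MAut(S_0)|$, which the paper asserts in one sentence and which you correctly single out as ``the hard part'' --- but then defer rather than execute.

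Precisely at that deferred step I see a concrete problem with the value you (and the paper) assert. You claim the surjection $\MAut(S_0)\to\MAut(C^\star)$ has kernel exactly the group $(S_3)^{(n-1)/3}$ of within-triple permutations. It does not: the rescaling $\delta\colon(x_0,\ldots,x_{n-1})\mapsto(x_0,\ldots,x_{n-2},2x_{n-1})$ of the distinguished coordinate is a monomial automorphism of $S_0$ (membership in $S_0$ does not constrain $x_{n-1}$ at all), it induces the identity on $C^\star$, and it is not a within-triple permutation; since every element of $\MAut(S_0)$ fixes coordinate $n-1$ and acts on it by an arbitrary nonzero scalar independently of its action on the other coordinates, one gets $|\MAut(S_0)|=2\cdot 6^{(n-1)/3}\cdot|\GL_3(m-1)|$. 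A direct check for $m=2$: the subspace $\{\vc x\in\FF_3^4:\ x_0+x_1+x_2=0\}$ has $6\cdot 2\cdot 2=24$ monomial automorphisms (permute $\{0,1,2\}$, scale that triple uniformly, scale $x_3$), whereas the asserted formula gives $|\GL_3(1)|\cdot 6=12$; equivalently, its orbit under $\Aut(H(4,3))$ has $4\cdot 12=48$ members, forcing $|\Aut(S_0)|=31104/48=648=27\cdot 24$. You explicitly say the rescaling of the distinguished coordinate must be tracked ``so that the final constant is right'', yet the constant you then write down omits it. Carrying your own outline through faithfully therefore yields an orbit length, and hence a total count, equal to one half of the theorem's displayed formula. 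The same omission occurs in the paper's one-line evaluation of $|\MAut(S)|$, so either there is a compensating subtlety that neither argument makes explicit, or the stated count is too large by a factor of $2$; in either case, the step ``assembling these contributions gives $|\MAut(S_0)|$'' is exactly where your proposal is not yet a proof.
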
 
 \begin{proof}
  The total number is~$N'(n)$ multiplied 
  by the number of sets equivalent to the subspace~$S$
  (we keep the notation from Lemma~\ref{l:fixed}
  and its proof).
  The group of monomial automorphisms of~$S$
  has order 
  $$
  |\MAut(S)|=|\MAut(C^*)|\cdot 6^{(n-1)/3}
  = |\GL[3]{m{-}1}|\cdot 6^{(n-1)/3}
  ,$$
  where $6=3!$ is the number of permutations 
  of three coordinates that correspond to three equal
  columns of~$H$.
  Hence, $|\Aut(S)|=|\MAut(S)|\cdot|S|$,
  and the number of sets (affine spaces)
  that are equivalent to~$S$ is
  
  \smallskip
  \mbox{}\hfill
  $
  \displaystyle
  \frac{|\Aut(H(n,3))|}{|\Aut(S)|} = 
  \frac{n!\cdot 6^n }{ 
   |\GL[3]{m{-}1}| \cdot 6^{(n-1)/3}
   \cdot 3^{n-m+1} }.
   $
 \end{proof}
 
 \begin{corollary}\label{c:noneq}
  The number of equivalence classes
  of $1$-perfect codes of rank $+1$ in $H(n=\frac{(3^m-1)}2,3)$ 
  is not less than
 $$
 \bigg \lceil
 \frac{ N'(n)}{ 
 |\GL[3]{m{-}1}| \cdot 2^{\frac{n-1}3}
   \cdot 3^{n-m+1} }
   \bigg \rceil.
 $$
 \end{corollary}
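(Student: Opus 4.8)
The plan is to divide the exact count of rank $+1$ codes provided by Theorem~\ref{th:all} by an upper bound on the size of a single equivalence class. The equivalence classes partition the set of all $3$-ary $1$-perfect codes of rank $+1$ in $H(n,3)$, and the class of a code $C$ has exactly $|\Aut(H(n,3))|/|\Aut(C)|$ members. Hence, any \emph{uniform} lower bound $|\Aut(C)|\ge A$ valid for every rank $+1$ code $C$ shows that each class contains at most $|\Aut(H(n,3))|/A$ codes, so the number of classes is at least the total number of codes times $A/|\Aut(H(n,3))|$; since the number of classes is a nonnegative integer, it is at least the ceiling of that quantity.

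First I would pin down the constant $A$. By Corollary~\ref{c:r-k}, every rank $+1$ code $C$ of length $n$ has a kernel $T\subseteq\FF_3^n$ with $\dim T\ge (n-1)/3$. For each $\vc v\in T$, the translation $\vc x\mapsto\vc x+\vc v$ is an isotopy of $H(n,3)$ — it acts as a cyclic shift of the alphabet in every coordinate where $\vc v$ is nonzero and as the identity elsewhere — hence it belongs to $\Aut(H(n,3))$; and by definition of the kernel it maps $C$ to itself, so it lies in $\Aut(C)$. These translations form a subgroup of $\Aut(C)$ of order $|T|\ge 3^{(n-1)/3}$, so one may take $A=3^{(n-1)/3}$.

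It then remains to substitute. Using $|\Aut(H(n,3))|=n!\cdot(3!)^n=n!\cdot 6^n$ together with the count from Theorem~\ref{th:all}, the number of equivalence classes is at least
\[
\left\lceil \frac{3^{(n-1)/3}}{n!\cdot 6^n}\cdot\frac{n!\cdot 6^n}{|\GL_3(m-1)|\cdot 6^{(n-1)/3}\cdot 3^{n-m+1}}\cdot N'(n)\right\rceil ;
\]
after cancelling $n!\cdot 6^n$ and using $3^{(n-1)/3}/6^{(n-1)/3}=2^{-(n-1)/3}$, this is exactly the expression claimed in Corollary~\ref{c:noneq}.

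This is a short counting argument rather than a hard one; the only points needing care are that the kernel translations are genuine graph automorphisms of $H(n,3)$ (which holds because translations are isotopies), that the relevant group for the equivalence is the full combinatorial automorphism group $\Aut(H(n,3))$ — consistent with the equivalence of Section~\ref{ss:equi} and with the normalization used in the proof of Theorem~\ref{th:all} — and that the passage to the ceiling is legitimate by integrality of the number of classes. I do not anticipate a real obstacle: the substantive work has already been done in Theorem~\ref{th:all} and Corollary~\ref{c:r-k}.
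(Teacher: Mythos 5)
Your proposal is correct and follows essentially the same route as the paper: divide the total count from Theorem~\ref{th:all} by the maximum class size $|\Aut(H(n,3))|/|\Aut(C)|$, bounding $|\Aut(C)|$ from below by $3^{(n-1)/3}$ via the kernel bound of Corollary~\ref{c:r-k}. You merely make explicit the step the paper leaves implicit, namely that kernel translations are isotopies and hence genuine automorphisms of $H(n,3)$ fixing $C$.
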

\begin{proof}
 The number of equivalence classes is not less than
 the value of all codes from Theorem~\ref{th:all}
 divided by the maximum cardinality of an equivalence class.
 The maximum cardinality of an equivalence class 
 equals $|\Aut(H(n,3))|$, i.e., $n!\cdot 6^n$,
 divided by the minimum order 
 of the automorphism group of a code from the considered family.
 The minimum order 
 of the automorphism group of a ternary $1$-perfect code of rank~$+1$
 is not less than~$3^{\frac{n-1}3}$, by Corollary~\ref{c:r-k}.
\end{proof}

For example, for $m=3$, we have $ N'(n) = 1352605460594256 $,
the total number of $(13,3^{10},3)_3$ codes of
rank $11$ is $ 9982462029409199967436800 $, the number of equivalence
classes is at least $ 9942054 $.
Based on experiments with random codes
mentioned in the end of Section~\ref{s:char},
we expect that
the real number of equivalence
classes is more than $20$ millions
(and only $1164330$ of them
can be obtained by concatenation, see Table~\ref{t:rk} in Section~\ref{s:class}).

\subsection{Switchings}\label{ss:swi}
In this section, we will show that the ternary $1$-perfect codes of rank at most~$+1$
can be obtained from each other by a sequence of two-coordinate switchings.
A similar result for extended binary $1$-perfect codes of rank at most~$+2$
was proved in~\cite{KroPot:swi}. 

Assume that we have two $q$-ary $1$-perfect codes~$C$, $C'$ 
of length~$n$ and
an automorphism $\beta=(\pi,\vc\theta)$ of $H(n,q)$ 
such that the coordinate
permutation~$\pi$ fixes all coordinates 
except maybe the $i$th and the $j$th coordinates
and the isotopy~$\vc\theta$ fixes the values of all coordinates 
except maybe the $i$th and the $j$th coordinates.
We say that $C'$ is a \emph{two-coordinate switching} of~$C$,
or an \emph{$\{i,j\}$-switching} of~$C$, or, more concrete, 
a \emph{$\beta$-switching} of~$C$, if
$$ C' \subset C \cup \beta(C). $$
(Similarly, three-, four-, etc. coordinate switchings
can be defined.) 
For a given~$\beta$, the process of finding all switchings of~$C$
is rather simple. We construct the \emph{inconsistency}
bipartite graph $G_{1,2}(C \cup \beta(C))$ on the vertex set $C \cup \beta(C)$, 
where two words are adjacent if
the distance between them is~$1$ or~$2$.
We collect in~$C'$ all isolated vertices of $G_{1,2}(C \cup \beta(C))$
and add a bipartite part of the remaining subgraph.
If this subgraph has more than one connected components,
then a  bipartite part can be chosen in more than two ways
and there are switchings different from~$C$ and~$\beta(C)$.
The process of finding a new code~$C'$ from~$C$ as described above 
is also called \emph{switching}.

\begin{theorem}\label{th:swi}
 The set of codes of rank at most~$+1$ is connected with respect to
the two-coordinate switching.
\end{theorem}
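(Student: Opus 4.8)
The plan is to connect every ternary $1$-perfect code of rank at most $+1$, by a chain of two-coordinate switchings, to one fixed linear code $C_0$, and to observe that each switching step used can be reversed; connectivity of the whole set follows. As $C_0$ I take the code given by Theorem~\ref{th:q3r1} with $\lambda_{\vc\mu}(z_0,\ldots,z_{n'-1})=z_0+\cdots+z_{n'-1}$ for every $\vc\mu\in C^\star$, where I abbreviate $z_j:=x_{3j+1}-x_{3j}$.

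First I would note that two-coordinate switching subsumes the action of $\Aut(H(n,3))$. If $\beta$ is any automorphism of $H(n,3)$ supported on at most two coordinates, then $\beta(C)\subseteq C\cup\beta(C)$ and $\beta(C)$ is again a $1$-perfect code; moreover, since for $q=3$ every automorphism of $H(n,3)$ is an affine map of $\FF_3^n$ and hence preserves ranks, $\beta(C)$ again has rank at most $+1$. So $\beta(C)$ is a two-coordinate switching of $C$, and it is reversible (apply $\beta^{-1}$, which is again supported on at most two coordinates). Writing an arbitrary element of $\Aut(H(n,3))$ as a product of coordinate transpositions and one-coordinate isotopies, we conclude that all codes in one equivalence class of rank at most $+1$ are mutually reachable by reversible two-coordinate switchings. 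By Theorem~\ref{th:q3r1}, it therefore suffices to connect $C_0$ to every code already presented in the standard form $\bigcup_{\vc\mu\in C^\star}K_{\vc\mu}$.

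Fix such a standard-form code $C$ and consider, for $j\in\{0,\ldots,n'-1\}$, the transposition $\alpha_j$ of coordinates $3j$ and $3j+1$, and, for $c\in\{1,2\}$, the isotopy $\vartheta_c$ adding $c$ to the last coordinate. Each of these leaves $\vc\sigma(x_0,\ldots,x_{n-2})$ unchanged, hence maps each $K_{\vc\mu}$ into the set with the same $\vc\sigma$-value. The key observation is that in $G_{1,2}(C\cup\beta(C))$, for $\beta\in\{\alpha_j,\vartheta_c\}$, no edge can join points with different $\vc\sigma$-values: if $d(\vc x,\vc y)\le 2$, then $\vc\sigma(x_0,\ldots,x_{n-2})-\vc\sigma(y_0,\ldots,y_{n-2})$ is $\vc\sigma$ applied to a vector of weight at most $2$, hence has weight at most $2$, and it lies in $C^\star$ (being the difference of two elements of $C^\star$), whose minimum distance is $3$; so it is $\vc 0$. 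Consequently each connected component of $G_{1,2}(C\cup\beta(C))$ is contained in a single set $K_{\vc\mu}\cup\beta(K_{\vc\mu})$, and for every subset $T\subseteq C^\star$ the code
$$ C_T=\bigcup_{\vc\mu\in T}\beta(K_{\vc\mu})\ \cup\ \bigcup_{\vc\mu\notin T}K_{\vc\mu} $$
is a two-coordinate switching of $C$; it is $1$-perfect of rank at most $+1$ by Lemma~\ref{l:q3}, being again in standard form with an altered family of affine quasigroups. These switchings are reversible too, since $\alpha_j$ is an involution and the effect of $\vartheta_c$ is undone by $\vartheta_{3-c}$.

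It remains to read off the effect of the moves on the quasigroup data and to compose them. A direct substitution, using that the $j$-th block equation $x_{3j+2}=\mu_j-x_{3j}-x_{3j+1}$ is symmetric in $x_{3j}$ and $x_{3j+1}$, shows that $\alpha_j$ sends $K_{\vc\mu}$ to the component obtained by replacing $z_j$ with $-z_j$ in $\lambda_{\vc\mu}$, i.e.\ flipping the coefficient of $z_j$ between $1$ and $2$; and $\vartheta_c$ sends $K_{\vc\mu}$ to the component in which $\lambda_{\vc\mu}$ is increased by $c$. Taking $T$ to be singletons and composing such switchings, we can, independently for each $\vc\mu\in C^\star$, turn every coefficient of the affine quasigroup $\lambda_{\vc\mu}$ into $1$ and its constant term into $0$, which carries $C$ to $C_0$. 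The step I expect to be the most delicate is precisely the component analysis of $G_{1,2}(C\cup\beta(C))$ together with checking that $\alpha_j$ and $\vartheta_c$ respect all the block equations defining the $K_{\vc\mu}$, so that each mixed selection indexed by $T$ is genuinely attained in one switching step.
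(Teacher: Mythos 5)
Your proof is correct and takes essentially the same route as the paper's: reduce to the standard form of Theorem~\ref{th:q3r1}, realize elementary transformations of the affine quasigroups $\lambda_{\vc\mu}$ as two-coordinate switchings acting within one block, and absorb the automorphism $\psi$ by decomposing it into coordinate transpositions and single-coordinate isotopies. The only differences are cosmetic: you normalize every code to one fixed linear code using the transposition of coordinates $3j,3j+1$ and an isotopy of the last coordinate, whereas the paper (Lemma~\ref{l:qua-swi} and Corollary~\ref{c:comp-swi}) connects arbitrary pairs of quasigroups via the moves $\gamma_{i,a}$ realized as $\{3j+1,3j+2\}$-switchings, and you make explicit---through the component analysis of $G_{1,2}(C\cup\beta(C))$---that the $\vc\mu$-components can be switched independently, a point the paper leaves implicit.
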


The proof is more or less straightforward
from the corollary of the following lemma.

\begin{lemma}\label{l:qua-swi}
 Every two $t$-ary quasigroups $f$, $f'$ of order~$3$ can be obtained
 from each other by a sequence of transformations 
 $\gamma_{i,a}$, $i\in\{0,\ldots,t-1\}$, $a\in\{0,1,2\}$,
 where $\gamma_{i,a}$ swaps the values of $\{0,1,2\} \backslash \{a\}$
 in the $i$th argument of the function.
\end{lemma}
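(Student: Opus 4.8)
The plan is to prove Lemma~\ref{l:qua-swi} by recalling that every $t$-ary quasigroup of order $3$ is affine, i.e. of the form
$f(x_0,\ldots,x_{t-1}) = a_0 x_0 + \cdots + a_{t-1} x_{t-1} + a$ with $a_i \in \{1,2\}$ and $a \in \{0,1,2\}$, as noted after Theorem~\ref{th:q3r1}. First I would compute the effect of $\gamma_{i,a}$ on the affine representation. Swapping the two values of $\{0,1,2\}\setminus\{a\}$ in argument $x_i$ is an affine permutation of $x_i$ (every permutation of $\mathrm{GF}(3)$ that fixes one point is $x\mapsto -x+\text{const}$ in suitable coordinates), so precomposing $f$ with it sends $a_i x_i$ to $a_i(-x_i+c)$ for the appropriate constant $c$, i.e. it flips the sign of the coefficient $a_i$ (swapping $1\leftrightarrow 2$ in $\{1,2\}$) and possibly changes the additive constant $a$. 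Thus, at the level of the parameter vector $(a_0,\ldots,a_{t-1};a)$, the transformation $\gamma_{i,a}$ toggles the $i$th coefficient between $1$ and $2$ and adds some element of $\mathrm{GF}(3)$ to $a$; and the added constant depends on $a$ (the fixed point of the swap), so for a fixed coordinate $i$ the three operations $\gamma_{i,0},\gamma_{i,1},\gamma_{i,2}$ produce the three distinct shifts of $a$ when combined with the coefficient flip.

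The key step is then a counting/reachability argument. Starting from $f$ with parameters $(a_0,\ldots,a_{t-1};a)$, I would first bring each coefficient $a_i$ to the target value $a_i'$: for each $i$ where $a_i \ne a_i'$, apply one $\gamma_{i,\ast}$ (any choice of the fixed point will do for the coefficient, though it perturbs $a$); this requires at most $t$ transformations and reaches some quasigroup with the correct coefficient vector $(a_0',\ldots,a_{t-1}')$ but possibly wrong constant $b \ne a'$. To fix the constant without disturbing the coefficients, I would apply a pair of transformations on the same coordinate, say $\gamma_{0,s}$ followed by $\gamma_{0,s'}$: two flips on coordinate $0$ restore $a_0'$, while the net additive effect is the difference of the two $a$-dependent shifts, which ranges over all of $\mathrm{GF}(3)$ as $s,s'$ vary. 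Hence any required correction $a' - b$ to the constant is achievable by at most two further transformations, completing the reduction of $f$ to $f'$.

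The main obstacle I expect is the bookkeeping of exactly how the additive constant $a$ transforms under $\gamma_{i,a}$ — specifically verifying that the map $a \mapsto (\text{shift induced on the constant})$ is non-constant, so that two applications on one coordinate can realize every element of $\mathrm{GF}(3)$ as a net shift. This is a short explicit computation: writing the swap fixing $a$ as the affine map $x \mapsto 2x + (a - 2a) = 2x - a = -x-a$ on $\mathrm{GF}(3)$ (since it fixes $a$ and has order $2$), precomposition turns $a_i x_i$ into $a_i(-x_i - a) = (-a_i)x_i - a_i a$, so the constant gains $-a_i a$; after the coefficient has been normalized to $a_i'$, successive flips on coordinate $i$ alternate the working coefficient between $a_i'$ and $-a_i'$ and, choosing fixed points $s$ then $s'$, produce net constant shift $-(-a_i')s' - a_i' s = a_i'(s' - s)$, which indeed runs over all of $\mathrm{GF}(3)$. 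Everything else is a routine termination count: at most $t+2$ transformations suffice. I would then remark that the corollary needed for Theorem~\ref{th:swi} follows because each $\gamma_{i,a}$ applied to a single $\lambda_{\vc\mu}$ in the representation of Theorem~\ref{th:q3r1} corresponds, on the code side, to an isotopy altering only the last coordinate's values on a structured subset, which is realized as a two-coordinate switching.
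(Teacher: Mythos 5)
Your proposal is correct and follows essentially the same route as the paper: both rely on the fact that every $t$-ary quasigroup of order $3$ is affine, observe that $\gamma_{i,a}$ negates the $i$th coefficient while shifting the constant by $-a_i a$, and then adjust the coefficients one at a time and the constant by a pair of swaps on a single coordinate. Your version merely spells out the bookkeeping (the paper uses $\gamma_{i,0}$, which leaves the constant untouched, to flip coefficients, and a pair $\gamma_{0,0}\gamma_{0,1}$ to shift the constant), so there is nothing substantive to add.
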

\begin{proof}
By Proposition~\ref{p:3aff}, every  
 $t$-ary quasigroup $f$ or order~$3$ can be written in the worm~\eqref{eq:3aff}, where
 $a_0,\ldots,a_{t-1}\in\{1,2\}$, $a\in \{0,1,2\}$. 
 To change $a_i$, we can apply $\gamma_{i,0}$. 
 To change $a$, we can apply $\gamma_{0,0}\gamma_{0,1}$.
\end{proof}

\begin{corollary}\label{c:comp-swi}
 Every two different $\vc\mu$-components~$K_{\vc\mu}$ and~$K'_{\vc\mu}$ 
 satisfying, for a given~$\mu$,
 the conclusion of Theorem~\ref{th:q3r1}
 are obtained from each other by a sequence of two-coordinate
 switchings.
\end{corollary}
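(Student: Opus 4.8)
The plan is to combine Theorem~\ref{th:q3r1} with Lemma~\ref{l:qua-swi} so that only one local fact remains to be established: \emph{for each generator $\gamma_{i,a}$ of Lemma~\ref{l:qua-swi} there is an isomorphism $\beta_{i,a}$ of $H(n,3)$, supported on the two coordinates $3i$ and $3i+1$, that maps the $\vc\mu$-component of Theorem~\ref{th:q3r1} with quasigroup $\lambda$ onto the $\vc\mu$-component with quasigroup $\gamma_{i,a}(\lambda)$}. Granting this, let $\lambda$ and $\lambda'$ be the quasigroups attached to $K_{\vc\mu}$ and $K'_{\vc\mu}$; by Theorem~\ref{th:q3r1} the two $\vc\mu$-components share the same linear defining relations and differ only in this quasigroup. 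Using Lemma~\ref{l:qua-swi} write $\lambda'=\gamma^{(k)}\cdots\gamma^{(1)}(\lambda)$ for some generators $\gamma^{(1)},\dots,\gamma^{(k)}$, and let $K^{(\ell)}$ be the $\vc\mu$-component with quasigroup $\gamma^{(\ell)}\cdots\gamma^{(1)}(\lambda)$, so that $K^{(0)}=K_{\vc\mu}$ and $K^{(k)}=K'_{\vc\mu}$. Then $K^{(\ell)}=\beta^{(\ell)}(K^{(\ell-1)})\subseteq K^{(\ell-1)}\cup\beta^{(\ell)}(K^{(\ell-1)})$, so $K^{(\ell)}$ is a two-coordinate switching of $K^{(\ell-1)}$ (a $\beta^{(\ell)}$-switching, in the terminology of Section~\ref{ss:swi}), and the chain $K_{\vc\mu}=K^{(0)},K^{(1)},\dots,K^{(k)}=K'_{\vc\mu}$ is the required sequence.

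To build $\beta_{i,a}$, recall that the quasigroup is evaluated at the differences $z_j=x_{3j+1}-x_{3j}$ and that over $\FF_3$ the transposition $\rho_a$ fixing $a$ and exchanging the other two symbols is the affine map $\rho_a(t)=-t-a$. First take the transposition $\pi$ of the coordinates $3i$ and $3i+1$: it fixes all other coordinates, preserves every block sum $x_{3j}+x_{3j+1}+x_{3j+2}$ (hence every $\mu_j$), leaves $x_{n-1}$ untouched, and turns $z_i$ into $-z_i$; thus $\pi$ realizes $\gamma_{i,0}$. For $a\ne0$ compose $\pi$ with the isotopy $\vc\theta$ that sends $x_{3i}\mapsto x_{3i}-a$ and $x_{3i+1}\mapsto x_{3i+1}+a$ and fixes the values of all other coordinates; since $-a+a=0$, it again preserves every block sum and $x_{n-1}$ while shifting $z_i$ by $-a$, so $\pi$ followed by $\vc\theta$ sends $z_i\mapsto-z_i-a=\rho_a(z_i)$. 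In all cases $\beta_{i,a}$ acts only on the coordinates $3i$ and $3i+1$, so it is a genuine two-coordinate isomorphism; substituting back through $z_i=x_{3i+1}-x_{3i}$ shows that $\beta_{i,a}$ carries the $\vc\mu$-component with quasigroup $\lambda$ onto the one whose quasigroup is $\lambda(z_0,\dots,\rho_a(z_i),\dots)=\gamma_{i,a}(\lambda)$.

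The only step that really has to be verified is this last identification: that the image of the canonical $\vc\mu$-component under $\beta_{i,a}$ is again of the canonical shape of Theorem~\ref{th:q3r1}, with exactly the $\gamma_{i,a}$-transformed output function. This amounts to a routine change of variables, since $\beta_{i,a}$ modifies only the coordinates of block $i$ and preserves both their sum and the coordinate $x_{n-1}$; I therefore do not expect a real obstacle. Note also that $\gamma_{i,a}$ is an involution, so the direction of the substitution is immaterial.
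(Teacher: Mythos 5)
Your proof is correct and follows essentially the same route as the paper: reduce to the generators $\gamma_{i,a}$ via Lemma~\ref{l:qua-swi} and realize each one as a two-coordinate switching given by a transposition of two coordinates inside block $i$ composed with an isotopy adding constants there. The only (immaterial) difference is that you switch coordinates $3i$ and $3i+1$ with shifts $\mp a$, whereas the paper switches $3i+1$ and $3i+2$ with shifts involving $\mu_i$.
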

\begin{proof}
By Theorem~\ref{th:q3r1}, the $\vc\mu$-components~$K_{\vc\mu}$ and~$K'_{\vc\mu}$
are constructed from some $(n-1)/3$-ary quasigroups~$\lambda$ and~$\lambda'$
of order~$3$.
By Lemma~\ref{l:qua-swi}, it is sufficient to prove the claim
for two quasigroups that are obtained from each other by the
transformation~$\gamma_{i,a}$, for some $i\in \{0,\ldots,(n-4)/3\}$
and $a\in\{0,1,2\}$. Without loss of generality, assume $i=0$.
Consider two subcases.

\emph{Subcase $a=0$.} An arbitrary word from~$K_{\vc\mu}$ has the form
\begin{multline*}
  \big(x_0,\ x_0+z_0,\ x_0-z_0+\mu_0, \quad
     x_3,\ x_3+z_1,\ x_3-z_1+\mu_1, \quad\ldots, \\
      x_{n-4},\ x_{n-4}+z_{(n-4)/3},\ x_{n-4}-z_{(n-4)/3}+\mu_{(n-4)/3},
\quad\lambda(z_0,\ldots,z_{(n-4)/3})\big). 
\end{multline*}
After transforming $(z_0,\ldots,z_{(n-4)/3})$ with~$\gamma_{0,0}$,
the value of the $1$st coordinate turns from $x_0+z_0$ to $x_0-z_0$,
and the value of the  $2$nd coordinate turns from $x_0-z_0+\mu_0$ to $x_0+z_0+\mu_0$.
This is the same as permuting these two coordinates
and adding $(0,-\mu_0,\mu_0,0,\ldots,0)$, which is a 
$\{1,2\}$-switching by the definition.

\emph{Subcase $a=1$ (similarly, $a=2$).}
After transforming $(z_0,\ldots,z_{(n-4)/3})$ 
with~$\gamma_{0,1}$,
the value of the $1$st coordinate turns 
from $x_0+z_0$ to $x_0-z_0+2$,
and the value of the  $2$nd coordinate turns 
from $x_0-z_0$ to $x_0+z_0+1$.
This is the same as permuting these two coordinates 
and adding $(0,2-\mu_0,1+\mu_0,0,\ldots,0)$,
which is again a $\{1,2\}$-switching.
\end{proof}

\begin{proof}[Proof of Theorem~\ref{th:swi}]
 Utilizing the characterization in Theorem~\ref{th:q3r1},
 we see that
 for the identity~$\psi$ the claim follows 
 from Corollary~\ref{c:comp-swi}.
 It remains to observe that the action of an arbitrary~$\psi$
 can be represented as a sequence of two-coordinate switchings.
\end{proof}

\subsection{Maximum kernel}\label{s:ker}

In this section, motivated by a question of one of the reviewers,
we consider the structure of a nonlinear ternary $1$-perfect code
with maximum kernel dimension.
The following theorem considers only length-$13$ codes; 
however, the most part of the proof (except the last paragraph)
is applicable to an arbitrary ternary $1$-perfect 
code~$C$ with kernel of size~$|C|/3$.

\begin{theorem}\label{th:mxker}
 There is only one equivalence class of $1$-perfect ternary codes 
 of length~$13$ with kernel of dimension~$9$.
\end{theorem}
\begin{proof}
Let $C = K \cup (\vc a+K) \cup (\vc b+K)$ be a ternary $1$-perfect 
code with kernel~$K$. Since $C$ is nonlinear, its rank
is $\dim(K)+2$, i.e., $+1$.

We claim that $C' = K \cup (\vc a+K) \cup (2\vc a+K)$ 
is also a $1$-perfect code. It is sufficient to show that there are no
two codewords $\vc x$ in $K \cup (\vc a+K)$ and  
$\vc y$ in $(2\vc a+K)$ at distance less than~$3$ from each other.
If $\vc x \in K$, then $\vc x,\vc y \in 2\vc a + C$;
if $\vc x \in (\vc a+K)$, then $\vc x,\vc y \in \vc a + C$.
In both cases, $\vc x$ and~$\vc y$ belong to the same $1$-perfect code
(and, moreover, to different cosets of its kernel),
and hence the distance between them is at least~$3$.

Now, we see that $C$ and $C'$ are $1$-perfect codes with symmetric
difference $(\vc b+K) \cup (2\vc a+K)$.
Moreover, $C'$ is linear.
By the definition of a $1$-perfect code, 
every word from $(\vc b+K)$ 
is at distance~$1$ from $(2\vc a+K)$.
It follows that $(\vc b+K) = \vc e+(2\vc a+K)$
for some weight-$1$ word $\vc e$.

We summarize: the code~$C$ is obtained from some linear
$1$-perfect code~$C'$ by translating an affine subspace of size~$|C'|/3$ with a translation vector of weight~$1$.

Since all linear $1$-perfect codes are equivalent,
we can assume without loss of generality that $C'$
has the form from the conclusion 
of Theorem~\ref{th:q3r1}
with identity~$\psi$ and 
$\lambda_{\vc\mu}(y_0,y_1,\ldots)=y_0+y_1+\ldots$
for all~$\vc\mu$. Moreover, we can assume 
that the nonzero value of~$\vc e$ (say, $e$) 
is in the last coordinate, i.e., $\vc e = (0,\ldots,0,e)$
(here we utilize the well-known fact that for every two coordinates~$i$
and~$j$ there is an automorphism 
of the Hamming code that sends~$i$ to~$j$).

After translating the subset $(2\vc a+K)$
in direction~$\vc e$, we see that the resulting code~$C$
still satisfies the conclusion of Lemma~\ref{l:HK}
and hence the conclusion of Theorem~\ref{th:q3r1}
with identity~$\psi$.
The only difference with~$C'$ is that for~$C$,
we have
$\lambda_{\vc\mu}(y_0,y_1,\ldots)=e+y_0+y_1+\ldots$
for $\mu\in K^*$, where~$K^*$ is some affine subset 
of~$C^*$ of cardinality~$|C^*|/3$.

It remains to observe that all affine $3$-subsets of the 
Hamming $(4,9,3)_3$ code~$C^*$ are equivalent and that
the subcases $e=1$ and $e=2$ lead to equivalent codes:
$$
K \cup (\vc a+K) \cup (2\vc e+2\vc a+K) 
= \vc a + 2\big(K \cup (\vc a+K) \cup (\vc e+2\vc a+K)\big).
$$
\end{proof}


\section{Enumeration of concatenated ternary 1-perfect codes 
of length~13}\label{s:class}
In this section, we describe the computer-aided classification of
the concatenated ternary $1$-perfect codes of length~$13$.
As intermediate steps, of independent interest,
we get the classification of RM-like $(9,3^6,3)_3$ codes, 
collections of such codes (including RM-like partitions) 
that are subsets of the 
all-parity-check code~$M$,
and partitions of $\FF_3^4$ into $1$-perfect codes.

Before we describe the approaches we use on each step of the classification,
we briefly discuss recognizing the equivalence, 
which is a very important and the most time-con\-su\-ming tool
for such classifications.
\subsection{Equivalence and graph isomorphism}\label{ss:iso}
A usual way to work with the equivalence of codes is 
to represent them by graphs in such a way that
two codes are equivalent if and only if the corresponding  
graphs are isomorphic, 
see~[12, \S3.3.2]. 
It is easy to adopt such an approach 
for collections of codes; 
one of the ways
is to represent a collection~$(C_i)_{i=0}^{k-1}$
of codes in~$\FF_q^n$ as a mixed-alphabet code in 
$\FF_q^n\times\{0,\ldots,k-1\}$:
$$ C = \left\{(c_0,\ldots,c_{n-1},i):\ i\in\{0,\ldots,k-1\},\ (c_0,\ldots,c_{n-1})\in C_i \right\}. $$
A standard software that helps to recognize the graph isomorphism is 
\texttt{nauty\&traces}~[13]; 
it is realized as a package that can be used in \texttt{c} or \texttt{c++}
programs. With this package, for a graph one can compute its 
canonically-labeled version, such that two graphs are isomorphic
if and only if the corresponding canonically-labeled graphs are equal to each other.
The same procedure computes the automorphism group of the graph,
which can be used for the numerical validation of the results.
The library suggests two alternatives 
of such procedure, \texttt{nauty} and \texttt{traces}.
According to our experience,
\texttt{traces} worked faster on codes with considered parameters.

\subsection{Classification of RM-like codes of length~9}\label{ss:RM}
Every RM-like code is a subset of a distance-$2$ MDS code, say~$M$.
By Proposition~\ref{p:3aff}, 
in the ternary case such code~$M$ is unique 
up to equivalence, 
for each length,
and we assume without loss of generality that 
\begin{equation}\label{eq:M}
  M = \left \{ (x_0,\ldots,x_{8})\in\mathbb{F}_3^{9} : \ x_0+\ldots+x_{8}=0\right\}.
\end{equation}
By~$\overline M$, we denote the complement of~$M$; i.e.,
$$
\overline M = \left\{ (x_0,\ldots,x_{8})\in\mathbb{F}_3^{9} :
\ x_0+\ldots+x_{8}\in\{1,2\}\right\}. 
$$
Our goal, at this stage, is to classify all RM-code subsets of~$M$ up to equivalence.
Without loss of generality, we take~$\vc{0}$ as a codeword.
We say that a set~$C_i$ of vertices of~$H(9,3)$ is a \emph{partial code} of level~$i$ if
\begin{enumerate}
 \item[(I)] $C_i$ contains~$\vc{0}$,
 \item[(II)] $C_i$ consists of words of weight at most~$i$,
 \item[(III)] every word of weight at most~$i-1$ in~$\overline M$
 is adjacent to exactly one codeword of~$C_i$, and
 \item[(IV)] $C_i$ is a distance-$3$ code.
\end{enumerate}

The classification algorithm we use is based on the straightforward fact 
that by removing the weight-$(i+1)$ codewords from
a {partial code} of level~$i+1$ 
we obtain a {partial code} of level~$i$.

1. We start with the singleton~$\{\vc0\}$, which is a unique \emph{partial code} of level~$1$ and~$2$.

2. Assume that at step~$i$ we have found representatives 
of all equivalence classes of partial codes of level~$i$. 
For each representative~$C_i$,
we can find all partial codes of level~$i+1$ 
that include~$C_i$ in the following way.
\begin{itemize}
 \item Denote by~$W_i$ the set of weight-$i$ words in $\overline M$ that are at distance more than~$1$ 
 from~$C_i$.
 \item Denote by~$R_i$ the set of weight-$(i+1)$ words in~$M$ that are at distance at least~$3$
 from~$C_i$.
 \item Let $X=(X_{\vc{x},\vc{y}})$ be the $\{0,1\}$-matrix whose rows are indexed by elements of~$R_i$
 and columns are indexed by elements of $W_i$ such that ~$\vc{x}$ from~$R_i$ and~$\vc{y}$ from~$W_i$
 are adjacent if and only if $X_{\vc{x},\vc{y}}=1$.
 \item As follows from (III), 
 for every partial code~$C_{i+1}$ of level~$i+1$, the sum of rows of~$X$ indexed by the elements of
  $C_{i+1}\backslash C_{i}$ equals the all-one row. Finding such collections of rows for a $\{0,1\}$-matrix
  is an instance of the well-known exact cover problem, 
  which is usually solved by Donald Knuth's Algorithm X (already realized as a function in many programming languages).
  \item After completing~$C_{i}$ by a solution of the exact cover problem with matrix~$X$, we need to check 
  property (IV); all solutions that satisfy it correspond to partial codes of level~$i+1$, by the definition.
\end{itemize}
Finally, all found continuations are checked for equivalence, and we keep only nonequivalent representatives.

We repeat p.2 for $i=3,4,5,6,7,8,9$ and obtain the following results:

There are $705600$ partial codes of level~$3$; they form $9$ equivalence classes;
the orders of the automorphism groups are 
$864$, $108$, $32$, $24$, $18$, $6$, $6$, $4$, $4$.

\begin{remark}
It is not difficult to observe that every partial code of level~$3$ consists
of the all-zero word, twelve words with three~$1$s, and
twelve words with three~$2$s. The twelve words
from each of the last two groups (to be exact, 
the sets of indices of nonzero coordinates of these words)
form a combinatorial structure known as a Steiner triple system of order~$9$,
SQS$(9)$, see e.g.~\cite{ColMat:Steiner}. 
There are $840$ different SQS$(9)$, and all of them are isomorphic.
There are $840^2=705600$ pairs of SQS$(9)$, 
and $9$ isomorphism classes of such pairs.
\end{remark}

The partial codes of level~$3$ are continued to, respectively, $4$, $4$, $0$, $4$, $0$, $0$, $0$, $0$, $0$  
nonequivalent partial codes of level~$4$, with automorphism group orders 
$864$, $216$, $72$, $72$, $108$, $108$, $36$, $36$, $12$, $12$, $12$, $12$.
Six of these codes, with automorphism group orders $864$, $72$, $108$, $36$, $12$, $12$,
are continued to a partial code of level $5$, $6$, $7$, $8$, and $9$;
at each step
the continuation is unique and preserves the automorphism group of the ``parent'' partial code.
The other $6$ partial codes of level~$4$ are continued uniquely to partial codes of level~$5$, 
but not to partial codes of level~$6$.

\begin{theorem}[computational]\label{th:RMlk}
  There are $1428840$ 
 $(9,3^6,3)_3$ RM-like codes that are subcodes of~$M$.
 $158760$ of them contain the all-zero word; 
 they form $4$ equivalence classes (the corresponding automorphism group orders are $629856$, $78732$, $8748$, $5832$), 
 $6$ monomial equivalence classes 
 (the corresponding monomial automorphism group orders are $864$, $108$, $12$, $72$, $36$, $12$),
 $7$ permutation  equivalence classes
 (the corresponding permutation automorphism group orders are $432$, $54$, $6$, $36$, $18$, $12$, $12$).
\end{theorem}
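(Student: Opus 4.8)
The plan is to use the partial-code classification already carried out for levels $3$ through $9$ and simply read off the RM-like codes as the partial codes of level $9$. Indeed, by Proposition~\ref{p:RMl}, an RM-like subcode of $M$ is precisely a $(9,3^6,3)_3$ code contained in $M$; and a distance-$3$ code $C\subseteq M$ of this size, after translating a fixed codeword to $\vc0$, is exactly a partial code of level $9$ in the sense of properties (I)--(IV), since a $1$-perfect-like covering of $\overline M$ by the neighborhoods of the codewords forces, level by level, the covering condition (III). Conversely every partial code of level $9$ containing $\vc0$ has the right cardinality (it covers all of $\overline M$ exactly once, and $|\overline M|=2\cdot 3^8$, so $|C|=3^8\cdot 2/(9\cdot 2)=3^6$) and minimum distance $3$, hence is RM-like. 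So the $6$ partial codes of level $9$ found by the search are precisely the $6$ monomial-equivalence representatives of RM-like codes containing $\vc0$.

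Next I would count. Each of the $6$ surviving level-$4$ partial codes extends uniquely all the way to level $9$, preserving the automorphism group; so the monomial automorphism group orders of the six codes are the six numbers $864,108,12,72,36,12$ inherited from level $4$ (these are the $|\MAut|$ values because these automorphisms fix $\vc0$, and in $H(9,3)$ every automorphism fixing $\vc0$ is monomial). The number of RM-like codes containing $\vc0$ is then $\sum_i |\MAut(H(9,3))_{\vc0}| / |\MAut(C_i)|$, where the stabilizer of $\vc0$ in $\Aut(H(9,3))$ has order $9!\cdot 2^9$; summing $9!\cdot 2^9$ divided by each of $864,108,12,72,36,12$ gives $158760$. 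For the total count $1428840$, observe that each RM-like code lies in a unique translate of $M$ (namely the coset $\vc c + M$ of any codeword $\vc c$), and within $M$ a given RM-like code has exactly $|C|=3^6$ codewords, each of which can play the role of $\vc0$ after translation; more precisely, the translates of the $158760$ codes containing $\vc0$ by the $3^6$ elements of $M$ yield all RM-like subcodes of $M$, but with multiplicity equal to the number of codewords, so $158760\cdot 3^6 / 3^6$ overcounts — instead one takes $158760$ codes-through-$\vc0$, each translate by $\vc v\in M$ gives a code through $\vc v$, and grouping by which codeword is sent to $\vc0$ shows the total is $158760 \cdot 3^6 / 3^6$... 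I would be careful here: the clean statement is that the $1428840 = 158760\cdot 9$ total arises because $M/\ker$ or, more simply, because translating the $158760$ codes by a transversal of size $9$ of the kernel-periods within $M$ enumerates all of them once; I would verify $1428840 = 158760 \cdot 9$ and confirm $9 = |M|/3^6\cdot\text{(something)}$ by a direct orbit count of the translation action of $M$ on RM-like subcodes of $M$, whose orbits have size $|M|/|\mathrm{stab}|$ with stabilizer the kernel of the code intersected with $M$.

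Finally I would compute the three equivalence counts and their group orders. For full equivalence (isometries of $H(9,3)$, not required to fix $\vc0$): two RM-like codes are equivalent iff the corresponding $6$ monomial classes merge under translation; one checks via \texttt{nauty/traces} on the graph representations (Section~\ref{ss:iso}) that the $6$ monomial classes collapse to $4$ classes, with $|\Aut(C)| = |\MAut(C)|\cdot 3^6$ for each (the factor $3^6=|C|$ coming from translations inside $M$ that stabilize the code up to monomial symmetry), giving $864\cdot 729=629856$, $108\cdot 729=78732$, $12\cdot 729=8748$, $72\cdot 729 + \ldots$ — here I must be careful that the two classes of order $12$ and the classes of orders $72,36$ merge appropriately to produce exactly the listed $629856,78732,8748,5832$; this merging is the one genuinely computational point and I would let the isomorphism software decide it. For permutation equivalence, coordinate permutations do not mix the two monomial classes of order $12$ (they have different permutation-automorphism orders $12$ and $12$ but arise from non-permutation-equivalent codes), and the search returns $7$ permutation classes with the stated $\PAut$ orders $432,54,6,36,18,12,12$; again the precise identification of which monomial classes split or stay is read off from the canonical forms. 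The main obstacle is thus not any inequality or structural lemma — those are immediate from the level-by-level search — but rather the bookkeeping of how the $6$ monomial-equivalence classes distribute into $4$ full-equivalence and $7$ permutation-equivalence classes and the correct orders of the resulting automorphism groups, which is handled by the graph-isomorphism computation described in Section~\ref{ss:iso} and cross-checked against the orbit--stabilizer identities $|\Aut(C)|\cdot(\text{orbit size}) = |\Aut(H(9,3))|$.
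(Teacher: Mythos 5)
Your overall route is the same as the paper's: build partial codes level by level via exact cover, arrive at the $6$ monomially nonequivalent level-$9$ codes with $|\MAut|$ inherited from level $4$, and then recover the counts by orbit--stabilizer. However, there is a concrete error in the step that produces $158760$. You divide $|\mathrm{Stab}_{\Aut(H(9,3))}(\vc{0})|=9!\cdot 2^9$ by the six monomial automorphism group orders; this sum is $9!\cdot2^9\cdot\frac{7}{32}=40642560$, not $158760$. The orbit of an RM-like subcode of $M$ under the full stabilizer of $\vc{0}$ contains many codes that are \emph{not} subcodes of $M$ (they lie in other distance-$2$ MDS codes), so that group is the wrong one for counting subcodes of $M$. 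The correct group is the stabilizer of $\vc{0}$ inside $\Aut(M)$: an automorphism of $H(9,3)$ fixing $\vc{0}$ is a monomial map with per-coordinate scalings in $\{1,2\}$, and it preserves $M=\{\vc{x}:\sum x_i=0\}$ only if all scalings coincide, so this group has order $2\cdot 9!=725760$, and indeed $725760\cdot\frac{7}{32}=158760$. Your derivation of the total $1428840$ is also left unfinished; the clean argument is to double-count pairs $(\vc{c},C)$ with $\vc{c}\in C$: translation by $M$ is transitive on $M$ and preserves the set of RM-like subcodes of $M$, so $N_{\mathrm{total}}\cdot 3^6=3^8\cdot 158760$, i.e., $N_{\mathrm{total}}=9\cdot 158760$.

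A second, smaller slip: the identity $|\Aut(C)|=|\MAut(C)|\cdot 3^6$ holds only when $\Aut(C)$ acts transitively on the codewords (the orbit of $\vc{0}$ under $\Aut(C)$ is a subset of $C$, giving $|\Aut(C)|=|\MAut(C)|\cdot|\mathrm{orbit}(\vc{0})|\le|\MAut(C)|\cdot 3^6$). It does hold for the first three classes ($864\cdot729$, $108\cdot729$, $12\cdot729$), but the fourth class has $|\Aut|=5832$, which is not $729$ times any of the remaining monomial orders $72,36,12$; so transitivity fails there and the blanket formula cannot be used. You partially flag this and defer to the graph-isomorphism computation, which is acceptable for a computational theorem, but the group-theoretic shortcut as stated is not valid.
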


\subsection{Collections of disjoint RM-like codes}\label{ss:RM9}
For the concatenation construction, we need a partition
of the distance-$2$ MDS code~$M$~\eqref{eq:M}
into $9$ RM-like codes.
In this section, we consider the classifications
of collections of disjoint RM-like subcodes of~$M$;
we call such a collection a \emph{$k$-collection}, 
where $k$ is the number of codes in it.
We classify them recursively.
The algorithm is rather straightforward; 
however, the amount of calculations was huge (it took about 30 core-years to finish it),
and the details considered below were essential to make it doable with reasonable computational
resources.

We first define the equivalence for $k$-collections. 
Two collections $(C_i)_{i=0}^{k-1}$ and $(D_i)_{i=0}^{k-1}$ 
of vertex sets of $H(n,q)$ are \emph{equivalent}
if there is an automorphism $\gamma$ of the graph $H(n,q)$
and a permutation~$\tau$
of $\{0,\ldots,{k-1}\}$ such that $\gamma(C_i)=D_{\tau(i)}$, 
$i=0,\ldots,{k-1}$.
If, additionally, $\tau({k-1})={k-1}$,
then we will say that
$(C_i)_{i=0}^{k-1}$ and $(D_i)_{i=0}^{k-1}$
are \emph{strongly equivalent}.
The set or all pairs $(\gamma,\tau)$ such that
$\gamma(C_i)=D_{\tau(i)}$, $i=0,\ldots,{k-1}$,
forms the \emph{automorphism group} 
of $(C_i)_{i=0}^{k-1}$.

From Section~\ref{ss:RM} we know that the number of equivalence classes of RM-like sub-codes of~$M$
is~$4$. For every $k$-collection $(C_i)_{i=0}^{k-1}$, we define its \emph{type} as the sequence
$t_0t_1\ldots t_{k-1}$ where $C_i$ belongs to the $t_i$th equivalence class, 
$t_i\in\{0,1,2,3\}$, $i=0,1,\ldots,k-1$.
The type is \emph{sorted} if $t_0\le t_1\le \ldots \le t_{k-1}$.
Obviously, every equivalence class of $k$-collections
has a representative of sorted type,
and this sorted type is uniquely defined for the class.
Trivially, removing the last code from a $k$-collection 
$(C_i)_{i=0}^{k-1}$, $k>1$,
we obtain a $(k-1)$-collection $(C_i)_{i=0}^{k-2}$; 
thus, we will say that 
$(C_i)_{i=0}^{k-1}$ is 
a \emph{continuation} of $(C_i)_{i=0}^{k-2}$.
It is also clear that if two $k$-collections
are strongly equivalent, then they are continuations
of equivalent $(k-1)$-collections.

For a given sorted type $t_0t_1\ldots t_{k-1}$, 
we classify all $k$-collections of this type
up to equivalence in two steps.

\begin{itemize}
 \item{(I)} At first, for each representative $(C_i)_{i=0}^{k-2}$
       of $(k-1)$-collections 
       of type $t_0\ldots t_{k-2}$, we construct all possible 
       continuations of type $t_0\ldots t_{k-2}t_{k-1}$. For this,
       we consider all RM sub-codes of $M$ from the $t_{k-1}$th equivalence class
       (there are $7560$, $60480$, $544320$, and $816480$ such codes 
        for $t_{k-1}=0,1,2,3$, respectively).
        Those codes who are disjoint with all $C_i$, $i=0,\ldots,k-2$, 
        are used for the role of $C_{k-1}$ to form a continuation~$(C_i)_{i=0}^{k-1}$
        of~$(C_i)_{i=0}^{k-2}$.
        The resulting $k$-collections are checked for the strong equivalence,
        and we keep only representatives of strong equivalence classes.
        This step can be done separately for each initial $(k-1)$-collection,
        which allows to process different $(k-1)$-collections on different 
        machines with relatively small (several gigabytes) amount of memory.
\item{(II)} Next, all representatives  of strong equivalence classes kept at step (I)
 for all different initial $(k-1)$-collections of the same type are checked for equivalence and 
 representatives of equivalence classes are collected. Because of the huge amount of resulting
 representatives, this step is processed on a machine with large amount of memory (more than 160~Gb).
 One of benefits of the two-step approach, apart from the rational use of computational resources,
 is that comparing for equivalence, especially for big values of $k$ ($6$--$9$), 
 takes much more time than comparing for strong equivalence, 
 and the precalculation made at step (I)
 minimizes the amount of such operations.
\end{itemize}

 $1$-collections are essentially RM-like codes, 
 which are classified in Section~\ref{ss:RM9}.
 With the two-step algorithm described above, 
 nonequivalent $k$-collections are classified
 subsequently for $k=2,\ldots,9$.
 \begin{theorem}[computational]\label{th:RM9}
  There are 
$4$, $131$, $10956$, $118388$, $501915$, $945965$, $755066$, $314833$, and $65436$  
  equivalence classes of $k$-collections of disjoint 
  RM-like subcodes of the distance-$2$ MDS code~$M$~\eqref{eq:M} for
  $k=1$, $2$, $3$, $4$, $5$, $6$, $7$, $8$, $9$, respectively.
  The distribution of equivalence classes of $9$-collections 
  (RM-like partitions of~$M$) by type is the following:\newline
  \mbox{}\!\!\!\!\!
\begin{tabular}{l@{\ }l@{\ }l@{\ }l@{\ }l}
 $000000000{:}\ 6$,     &
$000000011{:}\ 11$,     &
$000000111{:}\ 6$,      &
$000000222{:}\ 20$,     &
$000000333{:}\ 41$,     \\
$000001111{:}\ 26$,     &
$000011111{:}\ 11$,     &
$000011222{:}\ 107$,    &
$000011333{:}\ 173$,    &
$000111111{:}\ 66$,     \\
$000111222{:}\ 41$,     &
$000111333{:}\ 70$,     &
$000222222{:}\ 347$,    &
$000222333{:}\ 990$,    &
$000333333{:}\ 885$,    \\
$001111111{:}\ 24$,     &
$001111222{:}\ 199$,    &
$001111333{:}\ 381$,    &
$011111111{:}\ 51$,     &
$011111222{:}\ 112$,    \\
$011111333{:}\ 208$,    &
$011222222{:}\ 1205$,   &
$011222333{:}\ 3493$,   &
$011333333{:}\ 3006$,   &
$111111111{:}\ 26$,     \\
$111111222{:}\ 99$,     &
$111111333{:}\ 237$,    &
$111222222{:}\ 381$,    &
$111222333{:}\ 1180$,   &
$111333333{:}\ 1126$,   \\
$222222222{:}\ 3228$,   &
$222222333{:}\ 14356$,  &
$222333333{:}\ 21405$,  &
$333333333{:}\ 11919$   &
\end{tabular}
(we skip the sorted types that are not represented, e.g.,
$000000022{:}~0$).
 \end{theorem}

\subsection{1-perfect partitions of length 4}\label{ss:p9}
There are $72$ $1$-perfect $(4,9,3)_3$ codes;
all of them are equivalent to the $3$-ary Hamming code 
of length~$4$.
Straightforward computations show that from these $72$ codes,
one can choose $9$ pairwise disjoint codes in $104$ ways.
\begin{theorem}[computational]\label{th:p4}
 There are exactly two equivalence classes of partitions of $\FF_3^4$ into $1$-perfect codes.
 Each of the $8$ partitions from the smallest class consists 
 of the cosets of the same Hamming code, 
 and the order of its automorphism group is~$384$.
 Each of the $96$ remaining partitions consists of cosets 
 of two different Hamming codes, in the quantity of~$6$ and~$3$,
  and the automorphism group order is~$32$.
\end{theorem}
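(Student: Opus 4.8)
\emph{Proof proposal (for Theorem~\ref{th:p4}).}
The plan is to carry out an exhaustive finite search in the very small space $\FF_3^4$, in the same spirit as the searches of Sections~\ref{ss:RM}--\ref{ss:RM9} but elementary enough that it could also be completed, or at least cross‑checked, by hand.

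First I would list all $1$-perfect $(4,3^{2},3)_3$ codes. Since the ternary Hamming code is unique up to equivalence and every translation is an isometry of $H(4,3)$, each $1$-perfect code of length $4$ is a coset of a linear Hamming code; there are exactly $8$ such Hamming codes (equivalently, $8$ two‑dimensional MDS subspaces of $\FF_3^4$, one for each systematic generator matrix $\bigl(\begin{smallmatrix}1&0&a&b\\0&1&c&d\end{smallmatrix}\bigr)$ with $a,b,c,d\in\{1,2\}$ and $ad\ne bc$), and cosets of distinct Hamming codes are distinct, so we recover the $72=8\cdot 9$ codes mentioned just before the theorem. Two cosets of the same Hamming code are disjoint iff they are distinct; two cosets of distinct Hamming codes $H_i,H_j$ with $H_i\cap H_j=\{\vc0\}$ are never disjoint (then $H_i+H_j=\FF_3^4$, so every coset of $H_i$ meets every coset of $H_j$). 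Hence a partition can only involve cosets of at most two Hamming codes, and only of two whose intersection is $1$-dimensional; a short analysis of the $1$-dimensional‑intersection case then forces the $6{+}3$ pattern ($6$ cosets of $H_1$ together with $3$ cosets of $H_2$ filling out a coset of the $3$-dimensional space $H_1+H_2$).

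Next I would enumerate the $9$-subsets of the $72$ codes that are pairwise disjoint; since $9\cdot 9=3^4$, every such subset is automatically a partition of $\FF_3^4$, so these are exactly the maximum cliques of the disjointness graph, and a direct search returns $104$ of them. Then I would classify these $104$ partitions up to the equivalence of Section~\ref{ss:RM9} by the standard reduction to graph isomorphism: encode a partition $(C_i)_{i=0}^{8}$ as the mixed‑alphabet code in $\FF_3^4\times\{0,\ldots,8\}$ described in Section~\ref{ss:iso}, pass to the associated graph, and compute canonical forms and automorphism groups with \texttt{nauty}/\texttt{traces}. This yields exactly two equivalence classes, of sizes $8$ and $96$; transitivity of $\Aut(H(4,3))$ on the $8$ Hamming codes already accounts for the class of coset partitions. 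Inspecting one representative of each class then gives the structural descriptions ($9$ cosets of a single Hamming code in the first case, the $6{+}3$ structure in the second) together with the stated automorphism group orders $384$ and $32$.

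The computation itself is negligible, so there is no computational bottleneck; the only points needing care are making the clique enumeration provably exhaustive and choosing the code‑to‑graph encoding so that graph isomorphism faithfully reflects code equivalence (and, optionally, cross‑checking the two classes and the counts $8$ and $96$ against the purely combinatorial description of partitions by cosets of one or two Hamming codes derived above).
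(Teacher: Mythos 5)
Your proposal is correct and follows essentially the same route as the paper: list the $72$ codes, extract the $104$ maximum cliques of the disjointness graph, and sort them into two equivalence classes via the graph-isomorphism encoding of Section~\ref{ss:iso}. The combinatorial derivation you add (every code is a coset of one of the $8$ linear Hamming codes, complementary Hamming codes never have disjoint cosets, and the $1$-dimensional-intersection case forces the $6{+}3$ pattern inside the three cosets of the $3$-dimensional subspace $H_1+H_2$) is a genuine bonus over the paper, which only reports the outcome of a ``straightforward computation''; it correctly reproduces $8+96=104$, although the assertion that at most two Hamming codes can occur does not follow immediately from the pairwise analysis and needs one more line to exclude three pairwise $1$-dimensionally intersecting codes (a counting argument on how the nine blocks distribute over the three cosets of $H_1+H_2$ settles it, since each putative third-code block meets every coset of $H_1+H_2$ in exactly three points). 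One caution about the final step of your plan: the setwise stabilizer of the coset partition in $\Aut(H(4,3))$ has order $31104/8=3888$ and that of a mixed partition has order $31104/96=324$, so a direct computation of the automorphism group as defined in Section~\ref{ss:RM9} will not literally return the values $384$ and $32$ quoted in the statement; before claiming to reproduce those numbers you should pin down which normalization of the automorphism group they refer to.
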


\subsection{Concatenated codes}\label{ss:cc}

In this section, we describe the final steps of the classification
of concatenated $3$-ary $1$-perfect codes of length~$13$.
As reported in Sections~\ref{ss:RM9} and~\ref{ss:p9},
we have classified up to equivalence 
the partitions of the distance-$2$ MDS code~$M$ 
into RM-like $(9,3^6,3)_3$ codes and the partitions
of $\FF_3^4$ into $1$-perfect $(4,9,3)_3$ codes.
The third ingredient of the concatenation construction is a permutation
of $9$ codes. There are $9!=362880$ different permutation, which form
the symmetric group $\mathrm{Sym}(9)$, 
and this~$9!$ is the number of different concatenated codes 
that can be obtained
from given partitions of~$M$ and~$\FF_3^4$.
In the following subsection,
using the knowledge about the automorphism groups of the used
RM-like and $1$-perfect partitions, 
we certify that some of these codes
are guaranteedly equivalent; this essentially reduces
the number of considered codes.

\subsubsection{Double-cosets}\label{sss:dcoset}
The following fact is well known; in particular,
similar arguments were used in~\cite{Phelps:2000}
for the classification
of concatenated binary codes.

\begin{lemma}\label{l:dcoset}
Assume that 
  $\bar C = (C_0,\ldots,C_{k-1})$ and 
  $\bar P = (P_0,\ldots,P_{k-1})$ 
  are collections of mutually disjoint
  codes in~$H(n',q)$ and~$H(n'',q)$, respectively.
  Assume that $\alpha$ is a permutation of $\{0,\ldots,{k-1}\}$
  and we have two automorphisms
  $(\pi',\vc\theta',\tau')$
  and $(\pi'',\vc\theta'',\tau'')$
  of~$\bar C$
  and~$\bar P$ respectively.
  Then the concatenated codes 
  $$ \bigcup_{i=0}^{k-1} C_i P_{\alpha(i)}
  \qquad
  \mbox{and}
  \quad
  \bigcup_{i=0}^{k-1} 
  C_i P_{\tau''(\alpha(\tau'(i)))}
  $$
  are equivalent.
 \end{lemma}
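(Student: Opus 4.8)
The plan is to exhibit an explicit automorphism of $H(n,q)$, $n=n'+n''$, that sends the first concatenated code onto the second. Since $(\pi',\vc\theta',\tau')$ is an automorphism of the collection $\bar C$, we have $(\pi',\vc\theta')(C_i)=C_{\tau'(i)}$ for all $i$; likewise $(\pi'',\vc\theta'')(P_j)=P_{\tau''(j)}$. I would first form the ``product'' automorphism $\gamma=(\pi,\vc\theta)$ of $H(n,q)$ that acts as $(\pi',\vc\theta')$ on the first $n'$ coordinates and as $(\pi'',\vc\theta'')$ on the last $n''$ coordinates; this is legitimate because a coordinate permutation that does not mix the two blocks, together with a block-wise isotopy, is again an automorphism of the Hamming graph, and it acts on a concatenation coordinatewise: $\gamma(\vc c\vc p)=(\pi',\vc\theta')(\vc c)\,(\pi'',\vc\theta'')(\vc p)$.

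Next I would track where $\gamma$ sends each piece $C_iP_{\alpha(i)}$. Using the two displayed equalities,
\[
 \gamma\big(C_i P_{\alpha(i)}\big)
 = (\pi',\vc\theta')(C_i)\;(\pi'',\vc\theta'')(P_{\alpha(i)})
 = C_{\tau'(i)} P_{\tau''(\alpha(i))}.
\]
Hence
\[
 \gamma\Big(\bigcup_{i=0}^{k-1} C_i P_{\alpha(i)}\Big)
 = \bigcup_{i=0}^{k-1} C_{\tau'(i)} P_{\tau''(\alpha(i))}.
\]
Now reindex the union by substituting $i\mapsto (\tau')^{-1}(i)$, which is just a relabeling of the summation index over the full set $\{0,\ldots,k-1\}$; this turns the right-hand side into $\bigcup_{i} C_i P_{\tau''(\alpha((\tau')^{-1}(i)))}$. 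This is \emph{almost} the target code, but with $(\tau')^{-1}$ in place of $\tau'$. The cleanest fix is to observe that $\tau'$ was an arbitrary element of the automorphism group of $\bar C$, so we may simply start the argument with the automorphism $(\pi',\vc\theta',\tau')$ chosen so that its permutation part is $(\tau')^{-1}$ (the inverse of an automorphism of a collection is again one); after that substitution the union becomes exactly $\bigcup_i C_i P_{\tau''(\alpha(\tau'(i)))}$, as required. I would phrase the lemma's proof with this inverse already absorbed, so no apology is needed in the writeup.

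I do not expect a genuine obstacle here; the only point needing care is the bookkeeping with the three permutations $\tau'$, $\alpha$, $\tau''$ and making sure the composition $\tau''\circ\alpha\circ\tau'$ comes out in the stated order rather than reversed — which is exactly the inverse subtlety noted above, and is handled by choosing the representative automorphism appropriately. A secondary minor point worth one sentence is that the disjointness of the $C_iP_{\alpha(i)}$ (which makes both sides honest codes and lets us match the pieces unambiguously) follows from the disjointness of the $C_i$'s together with the distance-preserving nature of $\gamma$; I would mention this in passing but not belabor it.
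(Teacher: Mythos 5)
Your proposal is correct and follows essentially the same route as the paper's proof: both construct the block-wise product automorphism of $H(n'+n'',q)$ from the two given collection automorphisms and then reindex the union, with the only cosmetic difference being that the paper starts from the target code and uses $(\pi'^{-1},\vc\theta'^{-1})$ on the first block directly, while you apply the forward map and absorb the resulting $(\tau')^{-1}$ by replacing the automorphism of $\bar C$ with its inverse. The ``inverse subtlety'' you flag is exactly the bookkeeping the paper's chain of substitutions $j=\tau'(i)$, $l=\alpha(i)$ handles, so no gap remains.
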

\begin{proof}
   \begin{multline*}
  \bigcup_{i=0}^{k-1} 
  C_i \times P_{\tau''(\alpha(\tau'(i)))}    
  \stackrel{j=\tau'(i)} =
  \bigcup_{j=0}^{k-1} 
  C_{\tau'^{-1} (j)} \times P_{\tau''(\alpha(j))}
  = 
  \bigcup_{j=0}^{k-1} 
  \pi'^{-1}(\vc\theta'^{-1}(C_j)) \times P_{\tau''(\alpha(j))} \\
   \stackrel{l=\alpha(i)} = 
   \bigcup_{l=0}^{k-1} 
  \pi'^{-1}(\vc\theta'^{-1}(C_{\alpha^{-1}(l)})) \times P_{\tau''(l)}
  =\bigcup_{l=0}^{k-1} 
  \pi'^{-1}(\vc\theta'^{-1}(C_{\alpha^{-1}(l)})) 
  \times 
  \vc\theta''(\pi''(P_{l}))\\
  =\bigcup_{j=0}^{k-1} 
  \pi'^{-1}(\vc\theta'^{-1}(C_{j})) 
  \times 
  \vc\theta''(\pi''(P_{\alpha(j)}))
  =\vc\theta\big(\pi\big( \bigcup_{j=0}^{k-1} 
  C_{j}
  \times 
   P_{\alpha(j)}\big)\big)
   \end{multline*}
for some $(\pi,\vc\theta)$, composed from
$(\pi'^{-1},\pi'^{-1}(\vc\theta'^{-1}))$ 
and $(\pi'',\vc\theta'')$ acting on the corresponding coordinates.
\end{proof}

Hence, for given partitions $\bar C = (C_0,\ldots,C_8)$ 
  and 
  $\bar P = (P_0,\ldots,P_8)$,
  we can restrict our search by considering only permutations
that are representatives of the double-cosets from 
$T(\bar P) \backslash \mathrm{Sym}(9) / T(\bar C) $, where
$T(\bar D) = \{\tau:\ (\pi,\vc\theta,\tau)\in\Aut(\bar D)\mbox{ for some  $\pi$, $\vc\theta$}\}$.

\begin{corollary}\label{c:dcoset}
 For given partitions $\bar C = (C_0,\ldots,C_8)$ 
  and 
  $\bar P = (P_0,\ldots,P_8)$ all permutations~$\tau$
  from the same double-coset in 
  $T(\bar P) \backslash \mathrm{Sym}(9) / T(\bar C) $ 
  result in equivalent concatenated codes.
\end{corollary}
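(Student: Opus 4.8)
The plan is to deduce Corollary~\ref{c:dcoset} directly from Lemma~\ref{l:dcoset}, so almost all of the work is already done; what remains is to unpack what it means for two permutations to lie in the same double-coset of $T(\bar P)\backslash \mathrm{Sym}(9) / T(\bar C)$ and to match this up with the hypothesis of the lemma. First I would fix two permutations $\tau$ and $\tau_1$ lying in the same double-coset, which by definition means there are $\tau'\in T(\bar C)$ and $\tau''\in T(\bar P)$ with $\tau_1 = \tau''\circ\tau\circ\tau'$. The goal is to show that the concatenated codes $\bigcup_{i=0}^{8} C_i P_{\tau(i)}$ and $\bigcup_{i=0}^{8} C_i P_{\tau_1(i)}$ are equivalent.

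Next I would observe that $\tau'\in T(\bar C)$ means, by the definition of $T$, that there exist a coordinate permutation $\pi'$ and an isotopy $\vc\theta'$ with $(\pi',\vc\theta',\tau')\in\Aut(\bar C)$; similarly $\tau''\in T(\bar P)$ gives $(\pi'',\vc\theta'',\tau'')\in\Aut(\bar P)$. Setting $\alpha=\tau$, Lemma~\ref{l:dcoset} applies verbatim: it asserts that $\bigcup_{i=0}^{8} C_i P_{\tau(i)}$ and $\bigcup_{i=0}^{8} C_i P_{\tau''(\tau(\tau'(i)))}$ are equivalent, and $\tau''(\tau(\tau'(i))) = \tau_1(i)$ by construction. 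This is precisely the claim. Since equivalence of codes is an equivalence relation (transitive and symmetric), it follows that all permutations in a single double-coset yield mutually equivalent concatenated codes.

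I would present this as a two-sentence proof: quote the definition of the double-coset relation to extract $\tau' \in T(\bar C)$ and $\tau'' \in T(\bar P)$, pull out the corresponding $(\pi',\vc\theta')$ and $(\pi'',\vc\theta'')$ from the automorphism groups, then invoke Lemma~\ref{l:dcoset} with $\alpha = \tau$. The only thing to be slightly careful about is a bookkeeping point: the lemma is stated for a single pair of automorphisms producing a single ``shifted'' permutation $\tau''\circ\alpha\circ\tau'$, whereas the corollary wants any two double-coset representatives related; this is handled by noting that ``$\tau_1$ and $\tau$ lie in the same double-coset'' unwinds to exactly an equation of the form $\tau_1 = \tau''\circ\tau\circ\tau'$, which is the form the lemma consumes. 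There is no real obstacle here — the corollary is a pure restatement of the lemma in double-coset language — so the ``hard part,'' such as it is, is simply making the correspondence between the set-theoretic composition in the lemma and the group-theoretic double-coset notation transparent to the reader.
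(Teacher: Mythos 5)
Your proposal is correct and matches the paper's treatment: the corollary is given there without a separate proof precisely because it is the direct unwinding of Lemma~\ref{l:dcoset}, with $\tau_1=\tau''\circ\tau\circ\tau'$ extracted from the double-coset relation and the automorphisms $(\pi',\vc\theta',\tau')$, $(\pi'',\vc\theta'',\tau'')$ supplied by the definition of $T(\bar C)$ and $T(\bar P)$. Your bookkeeping of the composition order against the notation $T(\bar P)\backslash\mathrm{Sym}(9)/T(\bar C)$ is also consistent with the lemma's conclusion.
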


The automorphism groups
of the $65435$ nonequivalent partitions of~$M$ 
and two non\-equiv\-a\-lent partitions
of~$\FF_3^4$
are found in the way described in Section~\ref{ss:iso}.
Using GAP~\cite{GAP},
representatives of all
double-cosets were found
in several hours 
(to fasten the process, we group partitions with 
the same automorphism group and run the double-coset 
calculation once for each such group).
In such a way, we obtain $93278251$
concatenated $1$-perfect $(13,3^{10},3)_3$ codes.
This amount is too huge to check the nonequivalence
using the approach described in Section~\ref{ss:iso}
(
it
takes from less than~$1$ second to several hours for one code, 
depending on its symmetric properties).
However, as we will see below, 
more than~$99.9\%$ of these codes are guaranteedly nonequivalent,
and it remains to process the other~$0.1\%$.

\subsubsection{Uni-concatenated
and multi-concatenated codes}

After permuting the coordinates, a concatenated
$(13,3^{10},3)_3$
code $P$ can lose the property to be concatenated.
However, if the coordinate permutation $\pi$ 
fixes the partition
of the coordinates into two groups, 
$\{0,\ldots,8\}$ and $\{9,10,11,12\}$, 
then the resulting code $\pi(P)$ will 
be surely concatenated.
Indeed, the action of such a permutation 
on the concatenated code can be treated 
as the actions of two coordinate permutations 
on the length-$9$ and length-$4$ 
codes $C_i$ and $P_i$ in the construction~\eqref{eq:P}.
If a coordinate permutation $\pi$ changes the partition
$(\{0,\ldots,8\},\{9,10,11,12\})$,
and $\pi(P)$ is still concatenated,
then the concatenation representation of $\pi(P)$
is not derived from 
 the concatenation representation of $P$;
 we can say in this case that $C$ 
 has more than one concatenation structure,
 or for short, that it is \emph{multi-concatenated}.
 Concatenated codes that are not multi-concatenated
 are called \emph{uni-concatenated}.
 
The equivalence between 
uni-concatenated codes can be recognized 
in an easier way than the equivalence between arbitrary codes.
The following lemma is straightforward.
\begin{lemma}
If two uni-concatenated codes
$$
P= \bigcup_{i=0}^{8} C_i P_{\tau(i)}
\qquad\mbox{and}\quad
D= \bigcup_{i=0}^{8} A_i B_{\gamma(i)}
$$
are equivalent, then
$(C_i)_{i=0}^8$ is equivalent to~$(A_i)_{i=0}^8$
and 
$(P_i)_{i=0}^8$ is equivalent to~$(B_i)_{i=0}^8$.
\end{lemma}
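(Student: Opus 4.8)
The plan is to show that any automorphism of $H(13,3)$ carrying $P$ to $D$ must preserve the coordinate bipartition $(\{0,\ldots,8\},\{9,\ldots,12\})$; once this is established, comparing the two concatenation structures of $D$ is immediate. Concretely, I would fix $\phi=(\pi,\vc\theta)$ with $\phi(P)=D$ and transport the concatenation structure of $P$ through $\phi$. Since $\vc\theta$ acts coordinatewise and $\pi$ only relabels coordinates, for a codeword $\vc c\vc p$ of $P$ with $\vc c\in C_i$ on coordinates $0,\ldots,8$ and $\vc p\in P_{\tau(i)}$ on coordinates $9,\ldots,12$, the symbols of $\phi(\vc c\vc p)$ on the set $T:=\pi(\{0,\ldots,8\})$ depend only on $\vc c$ and those on $T^{\mathrm c}:=\pi(\{9,\ldots,12\})$ only on $\vc p$. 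Hence $D=\bigcup_i C'_i\,P'_{\tau(i)}$, a union of products along the bipartition $(T,T^{\mathrm c})$, where $C'_i$, $P'_i$ are the images of $C_i$, $P_i$ under the isometries induced by $\phi$; because being a distance-$2$ MDS code, an RM-like code, or a $1$-perfect code is a combinatorial property preserved by isometries, $(C'_i)$ is a partition of a distance-$2$ MDS code into RM-like codes and $(P'_i)$ a partition of the length-$4$ Hamming space into $1$-perfect codes (after identifying the coordinate set $T$ with $\{0,\ldots,8\}$). Thus $D$ carries a concatenation structure along $(T,T^{\mathrm c})$.

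Next I would use uni-concatenatedness to force $T=\{0,\ldots,8\}$. Assume not. As $|T|=9\neq4$, pick a coordinate permutation $\rho$ of $\{0,\ldots,12\}$ with $\rho(T)=\{0,\ldots,8\}$, hence $\rho(T^{\mathrm c})=\{9,\ldots,12\}$; then $\rho(D)$ is concatenated in the sense of \eqref{eq:P}. However $\rho$ cannot fix $\{0,\ldots,8\}$ setwise, since then it would also fix $\{9,\ldots,12\}$ and force $T=\rho^{-1}(\{0,\ldots,8\})=\{0,\ldots,8\}$; so $\rho$ changes the standard bipartition, which by definition makes $D$ multi-concatenated, a contradiction. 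Therefore $\pi$ preserves $\{0,\ldots,8\}$ and $\{9,\ldots,12\}$ setwise, so $\phi$ restricts to automorphisms $\phi'$ of $H(9,3)$ and $\phi''$ of $H(4,3)$ on the two blocks and $D=\bigcup_i \phi'(C_i)\,\phi''(P_{\tau(i)})$.

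Finally I would recover the partitions from $D$ via its fibers over the first nine coordinates: for $\vc x\in\FF_3^9$ put $D_{\vc x}=\{\vc y\in\FF_3^4:\vc x\vc y\in D\}$. From $D=\bigcup_i\phi'(C_i)\phi''(P_{\tau(i)})$ one reads $D_{\vc x}=\phi''(P_{\tau(i)})$ when $\vc x\in\phi'(C_i)$ and $D_{\vc x}=\emptyset$ otherwise, and from $D=\bigcup_iA_iB_{\gamma(i)}$ one reads $D_{\vc x}=B_{\gamma(i)}$ when $\vc x\in A_i$ and $\emptyset$ otherwise. Since $\tau$ and $\gamma$ are permutations and $(\phi''(P_j))$, $(B_j)$ are partitions of $\FF_3^4$, the nonempty fibers of $D$ form both the set $\{\phi''(P_j):j\}$ of nine distinct codes and the set $\{B_j:j\}$; hence $\{\phi''(P_j):j\}=\{B_j:j\}$, giving $(P_j)_{j=0}^8\sim(\phi''(P_j))_{j=0}^8\sim(B_j)_{j=0}^8$. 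Likewise, for each nonempty fiber $B$ the set $\{\vc x:D_{\vc x}=B\}$ is simultaneously one of the $A_i$ and one of the $\phi'(C_i)$, so $\{A_i:i\}=\{\phi'(C_i):i\}$ and $(C_i)_{i=0}^8\sim(\phi'(C_i))_{i=0}^8\sim(A_i)_{i=0}^8$. I expect the main obstacle to be the bipartition-fixing step: one must phrase the transported structure precisely enough to invoke the definition of multi-concatenated and verify that the auxiliary permutation $\rho$ genuinely moves the block $\{0,\ldots,8\}$, which is exactly where $9\neq4$ is used; the fiber argument afterward is routine bookkeeping.
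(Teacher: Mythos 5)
Your proof is correct and is essentially the argument the paper has in mind: the lemma is stated there without proof (``It is straightforward that\dots''), and the two ingredients you supply --- using uni-concatenatedness to force the equivalence to preserve the coordinate bipartition $(\{0,\ldots,8\},\{9,\ldots,12\})$, and then recovering the two partitions as the fibers $D_{\vc{x}}=\{\vc{y}:\vc{x}\vc{y}\in D\}$ and their level sets --- are exactly the facts the paper relies on in the proof of the following lemma (``By the definition of uni-concatenated codes, $\pi$ fixes the partition\dots'' and the identities of the form $C_i=\{\vc{c}:\vc{c}\vc{p}\in D\}$). Your handling of the one delicate point, that a second concatenation structure along a bipartition $(T,T^{\mathrm c})$ with $T\ne\{0,\ldots,8\}$ would exhibit $D$ as multi-concatenated via a permutation $\rho$ with $\rho(T)=\{0,\ldots,8\}$, matches the paper's definition and closes the gap cleanly.
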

Since, in our classification, we use only one representative
from each equivalence class of RM-like partitions and $1$-perfect partitions,
we can obtain two equivalent uni-concatenated codes only if the ingredient
partitions are the same in the both concatenations.

\begin{lemma}
Two uni-concatenated codes
\begin{equation} \label{eq:PP}
P= \bigcup_{i=0}^{8} C_i P_{\tau(i)}
\qquad\mbox{and}\quad
D= \bigcup_{i=0}^{8} C_i P_{\gamma(i)}
\end{equation}
are equivalent if and only if $\tau$ and~$\gamma$
are in the same double-coset from 
$T(\bar P) \backslash \mathrm{Sym}(9) / T(\bar C)$.
\end{lemma}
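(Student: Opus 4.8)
The plan is to take an arbitrary automorphism $\phi=(\pi,\vc\theta)$ of $H(13,3)$ with $\phi(P)=D$ and read off what it must do to the concatenation data. The first, and I expect the hardest, step is to show that $\pi$ preserves the partition of the coordinate set into the ``length-$9$ part'' $\{0,\ldots,8\}$ and the ``length-$4$ part'' $\{9,\ldots,12\}$. For this I would note that, since an isotopy acts coordinatewise and only $\pi$ can move coordinates around, applying $\phi^{-1}$ to the representation $D=\bigcup_{i}C_iP_{\gamma(i)}$ produces a representation of $P=\phi^{-1}(D)$ that is again of concatenated type, but now with respect to the coordinate partition $\{\,\pi^{-1}\{0,\ldots,8\},\ \pi^{-1}\{9,\ldots,12\}\,\}$. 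Since $P$ is uni-concatenated this must be the standard partition, and as its two blocks have different sizes, $\pi(\{0,\ldots,8\})=\{0,\ldots,8\}$. Consequently $\phi$ factors as $\phi=\phi'\times\phi''$, where $\phi'=(\pi',\vc\theta')$ is an automorphism of $H(9,3)$ acting on coordinates $0,\ldots,8$ and $\phi''=(\pi'',\vc\theta'')$ is an automorphism of $H(4,3)$ acting on coordinates $9,\ldots,12$.

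The second step is to see that $\phi'$ and $\phi''$ preserve the ingredient partitions $\bar C$ and $\bar P$. The observation here is that a concatenated code determines its ingredients through the ``fibre'' map: for a length-$13$ code $R=\bigcup_i A_iB_{\sigma(i)}$ in standard concatenated form, put $F_R(\vc a)=\{\vc b\in\FF_3^4:\ \vc a\vc b\in R\}$; then the set $\{\vc a:\ F_R(\vc a)\neq\emptyset\}$ is the MDS code $\bigcup_i A_i$, two words of this MDS code have the same image under $F_R$ precisely when they lie in a common block $A_i$ (using that $\sigma$ is a bijection and the $B_j$ are pairwise disjoint and nonempty), and the family of nonempty images is exactly $\{B_0,\ldots,B_8\}$. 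Applying this to both $P$ and $D$, and using the identity $F_D(\vc a)=\phi''\big(F_P(\phi'^{-1}(\vc a))\big)$, which is immediate from $D=(\phi'\times\phi'')(P)$, one obtains that $\phi'$ permutes the blocks of $\bar C$ and $\phi''$ permutes the blocks of $\bar P$; writing $\phi'(C_i)=C_{\tau'(i)}$ and $\phi''(P_j)=P_{\tau''(j)}$ we get $(\pi',\vc\theta',\tau')\in\Aut(\bar C)$ and $(\pi'',\vc\theta'',\tau'')\in\Aut(\bar P)$, i.e.\ $\tau'\in T(\bar C)$ and $\tau''\in T(\bar P)$.

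Finally I would compute the fibre over a word $\vc a\in C_i$ in two ways: it equals $P_{\gamma(i)}$ by the definition of $D$, while, since $\phi'^{-1}(\vc a)\in C_{\tau'^{-1}(i)}$, it also equals $\phi''\big(F_P(\phi'^{-1}(\vc a))\big)=\phi''(P_{\tau(\tau'^{-1}(i))})=P_{\tau''(\tau(\tau'^{-1}(i)))}$. Since the blocks $P_j$ are distinct, this forces $\gamma=\tau''\circ\tau\circ\tau'^{-1}$, and because $T(\bar C)$ and $T(\bar P)$ are subgroups of $\mathrm{Sym}(9)$, this puts $\gamma$ into $T(\bar P)\,\tau\,T(\bar C)$, i.e.\ into the same double coset of $T(\bar P)\backslash\mathrm{Sym}(9)/T(\bar C)$ as $\tau$, as required. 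The routine content is in the second and third steps, which are just bookkeeping with the fibre map; the genuine obstacle is the first step, where the uni-concatenated hypothesis is really needed — it is what prevents $\phi$ from interchanging or mixing the length-$9$ and length-$4$ coordinate blocks, so that the comparison of concatenation data makes sense at all.
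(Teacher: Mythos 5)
Your proposal is correct and follows essentially the same route as the paper's proof: deduce from uni-concatenatedness that the coordinate permutation preserves the blocks $\{0,\ldots,8\}$ and $\{9,\ldots,12\}$, show that the two restricted automorphisms induce permutations $\tau'\in T(\bar C)$ and $\tau''\in T(\bar P)$ of the ingredient partitions, and conclude $\gamma=\tau''\tau\tau'^{-1}$. The only difference is presentational — you identify the blocks via the full fibre map $F_R$, whereas the paper does the same identification using fibres over specially chosen test words (the weight-$\le 1$ word of each $P_i$, and the codewords at distance $1$ from a fixed word outside $M$).
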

\begin{proof}
 Assume that $D$ and~$P$ are equivalent, i.e.,  $D = \alpha(P)$, 
 where $\alpha = (\pi,\vc\theta)$
 for some coordinate permutation
 $\pi=(\pi(0),\pi(1),\ldots,\pi(12))$ 
 and isotopy $\vc\theta=(\theta_0,\ldots,\theta_{12})$.
 By the definition of uni-concatenated codes, 
 $\pi$ fixes the partition $(\{0,\ldots,8\},\{9,10,11,12\})$.
 Hence, $\pi'=(\pi(0),\pi(1),\ldots,\pi(8))$ 
 and $\pi''=(\pi(9)-9,\pi(10)-9,\pi(11)-9,\pi(12)-9)$ 
 are valid permutations of $(0,\ldots,8)$ and $(0,1,2,3)$,
 respectively. 
 Then, denoting $\vc\theta'=(\theta_0,\ldots,\theta_{8})$,
 $\vc\theta''=(\theta_9,\ldots,\theta_{12})$, 
 $\alpha' = (\pi',\vc\theta')$,
 $\alpha'' = (\pi'',\vc\theta'')$,
 we find
 \begin{equation} \label{eq:DD}
  D = \alpha(P)
 = \bigcup_{i=0}^{8} \alpha'(C_i) \alpha''(P_{\tau(i)}).
 \end{equation}
  
(*) \emph{We state that 
there is a permutation~$\beta$ in~$T(\bar C)$
such that $\alpha'(C_i) = C_{\beta(i)}$,
$i=0,\ldots,8$.}
Denote by~$\vc{p}_i$
the word of weight at most~$1$ 
in~$P_i$, $i=0,\ldots,8$.
It follows from~\eqref{eq:PP} that
$$
C_i = 
\{ \vc{c}\in\FF_3^9:\ 
\vc{c}\vc{p}_{\gamma(i)} \in D\},
$$
\begin{equation} \label{eq:fas}
 C_{\gamma^{-1}(i)} = 
\{ \vc{c}\in\FF_3^9:\ 
\vc{c}\vc{p}_{i} \in D\}.
\end{equation}
Denote by $\vc{r}_i$
the word of weight at most~$1$ 
in~$\alpha''(P_i)$, $i=0,\ldots,8$.
It follows from~\eqref{eq:DD} that
$$
\alpha'(C_i) = 
\{ \vc{c}\in\FF_3^9:\ 
\vc{c}\vc{r}_{\tau(i)} \in D\}.
$$
Since 
$\{p_i\}_{i=0}^8 = \{r_i\}_{i=0}^8$,
we have $r_i = p_{\rho(i)}$ 
for some permutation~$\rho$, 
and the last equation
turns to
$$
\alpha'(C_i) = 
\{ \vc{c}\in\FF_3^9:\ 
\vc{c}\vc{p}_{\rho(\tau(i))} \in D\},
$$
$$
\alpha'(C_{\rho^{-1}(\tau^{-1}(i))}) = 
\{ \vc{c}\in\FF_3^9:\ 
\vc{c}\vc{p}_{i} \in D\}.
$$
Comparing with~\eqref{eq:fas},
we find
$$
C_{\gamma^{-1}(i)} =
\alpha'(C_{\rho^{-1}(\tau^{-1}(i))}),
$$
$$
C_{\gamma^{-1}(\tau(\rho(i)))} =
\alpha'(C_{i}),
$$
and so (*) holds with
$\gamma^{-1} \tau \rho $, 
which is in~$T(\bar C)$ 
by the definition of~$T(\bar C)$.

(**) \emph{We state that 
there is a permutation~$\lambda$ in~$T(\bar P)$
such that $\alpha''(P_i) = P_{\lambda(i)}$,
$i=0,\ldots,8$.} The proof is similar to~(*).
Choose a word~$\vc{o}$ in 
$\FF_3^9 \backslash M$, 
where $M=\cup_{i=0}^8 C_i$.
By Proposition~\ref{p:RMl},
for each~$i$ from $\{0,\ldots,8\}$
there is a unique~$\vc{c}_i$ 
in~$C_i$
at distance~$1$ from~$\vc{o}$.
From~\eqref{eq:PP} we find
\begin{equation}\label{eq:fs}
 P_{\gamma(i)} = 
 \{\vc{p}\in\FF_3^4:\ \vc{c}_i\vc{p}
 \in D \}.
\end{equation}
It is easy to see that 
$\cup_{i=0}^8 
C_i= \cup_{i=0}^8 \alpha'(C_i)$,
and so for some permutation~$\rho$ 
we have 
$c_{\rho(i)}\in\alpha'(C_i)$, $i=0,\ldots.,8$.
From~\eqref{eq:DD} we find
$$
 \alpha''(P_{\tau(i)}) = 
 \{\vc{p}\in\FF_3^4:\ \vc{c}_{\rho(i)}\vc{p}
 \in D \},
$$
\begin{equation}\label{eq:fe}
 \alpha''(P_{\tau(\rho^{-1}(i))}) = 
 \{\vc{p}\in\FF_3^4:\ \vc{c}_{i}\vc{p}
 \in D \}.
\end{equation}
From~\eqref{eq:fs} and~\eqref{eq:fe} we conclude that
(**) holds with $\lambda=\gamma\rho\tau^{-1}$.

Now, from~\eqref{eq:PP} and~\eqref{eq:DD},
we have
$$
\bigcup_{i=0}^8 C_iP_{\gamma(i)}
=
\bigcup_{i=0}^8 C_{\beta(i)}P_{\lambda(\tau(i))}
\stackrel{j=\beta(i)}=
\bigcup_{j=0}^8 C_jP_{\lambda(\tau(\beta^{-1}(j)))}
$$
with~$\beta$ from~$T(\bar C)$ and $\lambda$
from~$T(\bar P)$.
We see that $\gamma=\lambda\tau\beta^{-1}$,
which proves the ``only if'' statement.
The ``if'' statement is straightforward.
\end{proof}

So, among the $93278251$ different codes obtained
as shown in the end of Section~\ref{sss:dcoset},
only multi-concatenated 
codes can be equivalent. 
Most of those codes have rank~$12$ and are uni-concatenated
by the following lemma.
\begin{lemma}
A concatenated $(13,3^{10},3)_3$ 
code has rank at most~$12$.
A multi-concatenated $(13,3^{10},3)_3$ 
code has rank at most~$11$.
\end{lemma}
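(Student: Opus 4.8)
The plan is to bound the rank of a concatenated code $P = \bigcup_{i=0}^{8} C_i P_{\tau(i)}$ by computing the dimension of its affine span directly from the structure of the summands. Recall that each $C_i$ is an RM-like $(9,3^6,3)_3$ code, a subcode of the distance-$2$ MDS code $M$ defined by $x_0+\dots+x_8=0$, and each $P_i$ is a $1$-perfect $(4,9,3)_3$ code, hence a coset of the ternary Hamming code of length $4$ (which has dimension $2$, span dimension $2$, and affine span the whole of $\FF_3^4$ only after translation — actually each $P_i$ being a $2$-dimensional affine subspace spans a $2$-dimensional affine flat). The key observation is that the first $9$ coordinates of every codeword of $P$ lie in $M$, which is an $8$-dimensional affine (in fact linear) subspace of $\FF_3^9$; so the projection of $\mathrm{aff}(P)$ onto the first $9$ coordinates has dimension at most $8$. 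Since the last $4$ coordinates contribute at most $4$ more to the dimension, we immediately get $\mathrm{rank}(P) \le 8 + 4 = 12$, proving the first assertion. (One should double-check that $12$ is attained, but that is not needed for the statement.)

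For the second assertion, the plan is to show that if $P$ has rank exactly $12$, then $P$ is uni-concatenated, i.e., the coordinate bipartition $(\{0,\dots,8\},\{9,\dots,12\})$ is canonically recoverable from $P$ alone. First I would argue that rank $12$ forces the projection of $\mathrm{aff}(P)$ onto the first $9$ coordinates to be exactly $M$ and the projection onto the last $4$ coordinates to be all of $\FF_3^4$; moreover the affine span $\mathrm{aff}(P)$ is then exactly $M \times \FF_3^4$, an $8+4=12$-dimensional flat. The crucial point is that this flat $\mathrm{aff}(P)$ has a distinguished structure: it is the product of an MDS-type hyperplane in a $9$-dimensional factor and a full $4$-dimensional factor. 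Any coordinate permutation $\pi$ preserving $P$ preserves $\mathrm{aff}(P) = M \times \FF_3^4$, so I would show that $\pi$ must preserve the bipartition — because the "defect" of the span (the single linear relation among the first $9$ coordinates, and the absence of any relation among the last $4$) pins down which block of size $9$ carries the relation. Concretely, $\mathrm{aff}(P)$, viewed as a linear code after translation, is the kernel of the single check equation $x_0+\dots+x_8=0$; the support of that parity check is exactly $\{0,\dots,8\}$, and any automorphism of the flat must fix this support set, hence fix the bipartition. Therefore $P$ admits no concatenation structure with a different coordinate split, i.e., $P$ is uni-concatenated; equivalently, a multi-concatenated code must have rank at most $11$.

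\textbf{The main obstacle} I anticipate is making the rank-$12$-implies-product-span step fully rigorous: one must verify that when rank equals $12$ the affine span genuinely splits as $M \times \FF_3^4$ rather than being some $12$-dimensional flat only partially aligned with the bipartition. This follows from the fact that the span is contained in $M \times \FF_3^4$ (dimension $12$) and has dimension $12$, so equality is forced — a short argument, but it is the linear-algebraic heart of the proof and should be stated carefully. A secondary point to handle cleanly is the translation: the summands $P_i$ are cosets, not subspaces, so $\mathrm{aff}(P)$ is an affine flat; after fixing a basepoint codeword of $P$ one works with the linear span, and the single parity check on the first nine coordinates becomes a genuine linear functional whose support is the object an automorphism must preserve. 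With these two points in place the lemma follows, and I would present it in exactly this order: (1) span $\subseteq M \times \FF_3^4$ giving rank $\le 12$; (2) rank $=12$ forces span $= M \times \FF_3^4$; (3) automorphisms of this flat preserve the support of its defining check, hence the bipartition, hence rank $12$ codes are uni-concatenated; (4) contrapositive: multi-concatenated implies rank $\le 11$.
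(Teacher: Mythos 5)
Your bound $\mathrm{rank}(P)\le 12$ is correct and is essentially the paper's argument, rephrased via projections instead of duality: the first nine coordinates of every codeword lie in the distance-$2$ MDS code $M$, an $8$-dimensional flat, so (after translating a codeword to $\vc{0}$) the linear span of $P$ is orthogonal to a nonzero word supported on the first nine coordinates.

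For the second assertion there is a genuine inferential gap in your step (3). You argue that any \emph{automorphism} of the flat $\mathrm{aff}(P)=M\times\FF_3^4$ must preserve the support $\{0,\ldots,8\}$ of the defining check, and conclude that ``$P$ admits no concatenation structure with a different coordinate split.'' But being multi-concatenated does not require an automorphism of $P$ (or of its span) carrying one bipartition to the other; it only requires that $P$ be expressible in the form \eqref{eq:P} with respect to two different $9{+}4$ splits of the coordinates, and two such representations need not be related by any symmetry of $P$. So the automorphism statement, while true, does not by itself rule out a second concatenation structure. The repair is short and is exactly what the paper does: \emph{every} concatenation structure of $P$ with $9$-block $A$ forces the translated $P$ to be orthogonal to a word with support exactly $A$, because the union of the nine length-$9$ component codes is a distance-$2$ MDS code, and every such code in $\FF_3^9$ is a coset of the hyperplane orthogonal to a word with all entries in $\{1,2\}$. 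If $P$ is multi-concatenated, two distinct $9$-blocks give two check words with distinct supports, hence linearly independent, hence $\mathrm{rank}(P)\le 13-2=11$. Equivalently, in your contrapositive form: if $\mathrm{rank}(P)=12$, the dual of the translated span is one-dimensional, so the support of a nonzero dual word --- and with it the $9$-block of any concatenation structure --- is uniquely determined. Your remark that the defect of the span ``pins down which block of size $9$ carries the relation'' is precisely this argument; it is the automorphism detour that should be deleted.
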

\begin{proof}
Let a $(13,3^{10},3)_3$ code $P$
be represented in the form~\eqref{eq:P}.
Assume without loss of generality that 
$\vc0\in P$.
The union $M=\bigcup_{i=0}^8 C_i$ is a distance-$2$ MDS codes.
Such a code in $\FF_3^9$ is unique up to equivalence, and it is orthogonal
to a word from~$1$s and~$2$s. It follows
that $P$ is orthogonal to a word with nonzeros
in the first~$9$ coordinates and zeros in the last $4$ coordinates.
Hence, the rank of~$P$ is less than~$13$.
If the code is multi-concatenated,
then similarly it is orthogonal to another word with another set of nonzero positions.
Hence, the rank does not exceed~$13-2$.
\end{proof}
For the remaining $1164331$ codes
of rank less than~$12$,
the multi-concatenated
property can be checked relatively fast,
and we found that the majority
of them are uni-concatenated.
Recognizing equivalence
among the  
$74464$ multi-concatenated codes 
(it took about $4.5$ core-years),
$37540$ equivalence classes were found. 
The final results are described in the next section.

\subsubsection{Results}\label{s:res}

\begin{theorem}[computational]\label{th:p13}
 There are exactly $93241327$ equivalence classes
 of concatenated ternary $1$-perfect codes of length~$13$.
\end{theorem}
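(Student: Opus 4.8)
The plan is to assemble the ingredients prepared above. By Lemma~\ref{l:rom} with $m=3$, every concatenated $(13,3^{10},3)_3$ code has the form $\bigcup_{i=0}^{8}C_iP_{\tau(i)}$ for a partition $\bar C=(C_0,\ldots,C_8)$ of the distance-$2$ MDS code $M$ of~\eqref{eq:M} into RM-like $(9,3^6,3)_3$ codes, a partition $\bar P=(P_0,\ldots,P_8)$ of $\FF_3^4$ into $1$-perfect $(4,9,3)_3$ codes, and a permutation $\tau\in\mathrm{Sym}(9)$. Since a coordinate automorphism applied to the first nine coordinates (respectively, to the last four) carries $\bar C$ (respectively, $\bar P$) to an arbitrary equivalent partition, I would first replace $\bar C$ and $\bar P$ by the representatives classified in Theorems~\ref{th:RM9} and~\ref{th:p4}; then, by Corollary~\ref{c:dcoset}, it suffices to keep for each such pair $(\bar C,\bar P)$ one permutation $\tau$ from each double coset of $T(\bar P)\backslash\mathrm{Sym}(9)/T(\bar C)$. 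Computing these double cosets from the automorphism groups of the representative partitions (obtained as in Section~\ref{ss:iso}) produces a list $\mathcal L$ of $93278251$ concatenated codes which, by construction, meets every equivalence class.

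It remains to find the coincidences inside $\mathcal L$. By the two lemmas on uni-concatenated codes in Section~\ref{ss:cc}, if two uni-concatenated members of $\mathcal L$ are equivalent then they are built from the same representative pair $(\bar C,\bar P)$ and from permutations lying in a single double coset, hence they coincide; moreover, as observed there, being uni-concatenated is preserved under equivalence among concatenated codes, so no uni-concatenated member of $\mathcal L$ is equivalent to a multi-concatenated one. Consequently every genuine equivalence inside $\mathcal L$ is between two multi-concatenated codes. By the rank lemma of Section~\ref{ss:cc} a multi-concatenated $(13,3^{10},3)_3$ code has rank at most $11$, and rank is an equivalence invariant for $q=3$ (as noted in the introduction); hence the $93278251-1164331=92113920$ codes of $\mathcal L$ of rank $12$ are pairwise inequivalent, and on the remaining $1164331$ codes of rank at most $11$ one tests directly for a second concatenation structure, which singles out $74464$ multi-concatenated codes.

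Finally I would classify those $74464$ codes up to equivalence by the graph-isomorphism method of Section~\ref{ss:iso}: encode each code by a graph and canonicalise with \texttt{nauty}/\texttt{traces}. This yields $37540$ equivalence classes. Adding the $93278251-74464$ uni-concatenated members of $\mathcal L$, each forming its own class, gives the total $(93278251-74464)+37540=93241327$.

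The combinatorial part above is routine; the real obstacle is computational. Its bottleneck is the input to the first step --- the classification of the RM-like partitions of $M$ in Theorem~\ref{th:RM9}, the roughly thirty core-year computation --- followed, in the last step, by the canonical-form computation over the $74464$ multi-concatenated codes, about four and a half core-years, with individual codes occasionally taking hours because of large symmetry. Since several earlier enumerations cited in the introduction turned out to contain arithmetic or bookkeeping errors, I would back the final count with a numerical cross-check by orbit counting, using the automorphism-group orders that the canonicalisation returns as a by-product.
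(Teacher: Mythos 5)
Your proposal follows essentially the same route as the paper: build the $93278251$ candidates from the classified partitions of Theorems~\ref{th:RM9} and~\ref{th:p4} via double-coset representatives (Corollary~\ref{c:dcoset}), argue through the uni-/multi-concatenated lemmas that only multi-concatenated codes can collide, filter by the rank-at-most-$11$ condition to isolate the $74464$ multi-concatenated codes, and canonicalise those to get $37540$ classes, yielding $93278251-74464+37540=93241327$. The reasoning and all intermediate counts agree with the paper's, so no further comment is needed.
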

\begin{table}[h]
\mbox{}\hfill
\begin{tabular}{r|c@{\ \,}c@{\ \,}c@{\ \,}c@{\ }c@{\ }c@{\ }c@{\ }c@{\ }c@{\ }c@{\ }c|c}
dim(kernel) & 0&1&2&3&4&5&6&7&8&9&10&0--10 \\ \hline
rank 10 & -- & -- & -- & -- & -- & -- & -- & -- & -- & -- & 1 & 1 \\
rank 11
&
--
&
--
&
--
&
--
&
693021
&
447241
&
23418
&
634
&
15
&
1 
&
--
&
1164330
\\
rank 12
&
\color{gry}\textbf0
&
\color{gry}\textbf0
&
\color{gry}\textbf0
&
193689
&
70784858
&
20371138
&
719384
&
7919
&
8
&
--
&
--&92076996
\\
rank 13
&
\color{gry}\textbf0
&
\color{gry}\textbf0
&
\color{gry}\textbf0
&
0
&
0
&
0
&
0
&
0&
--
&
--
&
--
&0
\\ \hline
rank 10--13
&
\color{gry}\textbf0
&
\color{gry}\textbf0
&
\color{gry}\textbf0 &
193689&
71477879&
20818379&
742802&
8553&
23&
1&1&93241327
\end{tabular}
\hfill\mbox{}
 \caption{The number of equivalence classes of concatenated
 ternary $1$-perfect codes of length $13$ for each admissible rank and
 kernel dimension.}\label{t:rk}
\end{table}
The distribution of equivalence classes according
to the rank and the dimension of the kernel is shown in Table~\ref{t:rk}.
The mark ``--'' in the table denotes that codes with the corresponding
parameters do not exist even without the restriction to
be concatenated.
In all such cases, there is a theoretical explanation:
\begin{itemize}
 \item a code of rank~$10$ is linear and has kernel dimension~$10$,
 and vice versa;
 \item by Corollary~\ref{c:r-k}, a code of rank~$11$
       has kernel dimension at least~$4$;
 \item the following argument is a special case of~\cite[Proposition~5.1]{PheVil:q-kernel}:
if a ternary code~$C$ has the kernel of size~$|C|/3$ 
(in our case, kernel dimension~$9$), 
then the affine span of~$C$ 
has size $3\cdot |C|$ (i.e., rank~$11$ in our case);
 \item codes of rank~$13$ and kernel dimension~$8$
 do not exist because of the nonexistence of 
 a full-rank tiling of~$\FF_3^5$~\cite{OstSza:2007}
 (for the connection between tilings and $1$-perfect codes, 
 see~\cite{Krotov:3tiling}).
\end{itemize}
Taking into account recently discovered 
length~$13$ perfect ternary codes of full rank
and kernel dimension from~$3$ to~$7$~\cite{Krotov:3tiling},
only the existence of
$1$-perfect $(13,3^{10},3)_3$ codes
of kernel dimension less than~$3$
remains open (in Table~\ref{t:rk},
the corresponding values are grayed).
In particular, 
we see that with concatenation, for $(13,3^{10},3)_3$ codes
of rank~$11$, one can obtain any kernel dimension from~$4$
to~$9$. 
In contrast, 
by the fixed-coordinate switching from the Hamming code,
only 
kernel dimensions~$8$ and~$9$ 
 can be obtained
for these parameters, see~\cite[Table~1]{PheVil:q-kernel}.
Examples of $3$-ary length-$13$ 
perfect codes 
for each known values 
of the rank
and the kernel dimension
(including non-concatenated rank-$13$ codes) are available in~\cite{Perfect-related}.

The distribution of equivalence classes according
to the order of the automorphism group is shown in Table~\ref{t:a}. 
Note that the order of the automorphism group was calculated directly 
(see Section~\ref{ss:iso}) for
multi-concatenated codes,
while for
uni-concatenated codes it was found
from the automorphism group orders of the partitions~$\bar C$ and~$\bar P$ 
and the size of the corresponding
double-coset.

\begin{table}
$$
\begin{tabular}{|@{\ }r@{\ }|@{\ }l@{\ }|}
\hline $|\Aut|$ & \# \\ \hline
27& 49195\\
54& 24928\\
81& 60630474\\
108& 1887\\
162& 3120437\\
216& 46\\
243& 24257914\\
324& 24277\\
486& 1588122\\
648& 308\\
\hline
\end{tabular}\
\begin{tabular}{|@{\ }r@{\ }|@{\ }l@{\ }|}
\hline $|\Aut|$ & \#  \\ \hline
729& 3034912\\
972& 24487\\
1296& 1\\
1458& 222834\\
1944& 439\\
2187& 202868\\
2916& 10047\\
3888& 3\\
4374& 30442\\
5832& 311\\
\hline
\end{tabular}\
\begin{tabular}{|@{\ }r@{\ }|@{\ }l@{\ }|}
\hline $|\Aut|$ & \#  \\ \hline
6561& 8666\\
8748& 2601\\
11664& 3\\
13122& 4521\\
17496& 141\\
19683& 167\\
26244& 634\\
34992& 4\\
39366& 348\\
52488& 52\\
\hline
\end{tabular}\
\begin{tabular}{|@{\ }r@{\ }|@{\ }l@{\ }|}
\hline $|\Aut|$ & \#  \\ \hline
59049& 9\\
69984& 1\\
78732& 135\\
104976& 6\\
118098& 31\\
157464& 13\\
209952& 2\\
236196& 21\\
314928& 1\\
354294& 14\\
\hline
\end{tabular}\
\begin{tabular}{|@{\ }r@{\ }|@{\ }l@{\ }|@{\ }l@{\ }|@{\ }r@{\ }|}
\hline $|\Aut|$ & \# & R & K  \\ \hline
472392& 5  & 11 & 7,8 \\
708588& 10 & 11 & 7,8 \\
1062882& 1 & 12 & 8 \\
1417176& 3 & 11 & 7,8 \\
1889568& 2 & 11 & 8 \\
2834352& 1 & 11 & 8 \\
4251528& 1 & 11 & 8 \\
6377292& 1 & 11 & 9 \\ 
8503056& 1 & 11 & 8  \\
663238368& 1 & 10 & 10\\ \hline
\end{tabular}
$$
 \caption{The number of equivalence classes of concatenated
 ternary $1$-perfect codes of length $13$ for each admissible order 
 of the automorphism group (for some codes, the rank~R and the kernel dimension~K are shown).}\label{t:a}
\end{table}

We finalize this section with 
two particular questions
regarding characteristics of unrestricted $(13,3^{10},3)_3$ codes.
Note that if the answer to the first question
is ``no'', then the second one has answer~``$27$''.

\begin{problem}
 Do there exist ternary $1$-perfect codes of length~$13$, rank~$12$ or~$13$, and
with kernel dimension~$2$, $1$, $0$?
What is the minimum number of automorphisms 
of a ternary $1$-perfect code of length~$13$?
\end{problem}

\section{Conclusion}
In this paper, we studied ternary $1$-perfect codes,
mainly focusing on the classification results.
The two main results of the paper illustrate the two main approaches in 
constructing nonlinear $1$-perfect codes, the switching approach and the concatenation.
(It should be noted that there are also algebraic ways to construct $1$-perfect codes;
for example, one can construct ternary $\mathbb{Z}_3\mathbb{Z}_9$-linear perfect codes as shown in~\cite{Wu:additive}.)
We 
theoretically characterized $1$-perfect codes of rank~$+1$ of any admissible length
and
obtained a computer-aided enumeration 
of the equivalence classes 
of concatenated $1$-perfect 
codes of length~$13$.
The rest of this section contains concluding remarks
that concern related questions for further investigation.

Our characterization of ternary $1$-perfect codes of rank~$+1$ is in the spirit of similar results
for binary $1$-perfect codes of rank~$+2$
in~\cite{AvgHedSol:class}.
Based on the connection
between $4$-ary length-$n$
and binary length-$3n$ perfect codes
(for example, by concatenation~\cite{Zin1976:GCC},
see, e.g.,~\cite[Remark~2]{SKS:drg} for the concrete mapping),
one can hope that the
$4$-ary $1$-perfect codes of rank~$+1$
can also be
characterized;
this remains actual as an objective for future research.
A~variant of that problem is to find a characterization of $4$-ary $1$-perfect codes
of small ($+1$ or $+2$) $2$-rank, where $2$-rank is the dimension
of the affine span over the subfield~$\FF_2$ 
of~$\FF_4$. 
In contrast to the unique linear Hamming code,
there are nonequivalent additive (i.e., linear over~$\FF_2$, or, equivalently, of $2$-rank~$+0$) $4$-ary $1$-perfect codes 
of the same length~\cite{Lindstrom69}, 
which provides additional difficulties to the characterization
of $4$-ary $1$-perfect codes of small $2$-rank.
A~similar question can also be considered for $1$-perfect
codes in Doob spaces~\cite{SHK:additive},
which have much in common with the $4$-ary Hamming space.


The evaluation of the number of equivalence classes of $1$-perfect codes of length~$13$
and rank~$+1$ shows that their number (more than $20$ millions) 
is too large to enumerate them computationally 
using the straightforward approach.
However, our experience with concatenated codes shows 
that combining theoretical
and computational approaches can help to enumerate 
much larger number of equivalence classes of $(13,3^{10},3)_3$ codes.
So, we hope that with developing the theory, together with improving 
the graph isomorphism software and growing the performance of computers,
the enumeration of codes of limited rank or even all $1$-perfect 
$(13,3^{10},3)_3$ codes might be possible.
For studying ternary \mbox{$1$-per}\-fect codes of larger lengths, only theoretical results
can be applied, and among the interesting problems we mention the problem
of characterizing all admissible pairs (rank, kernel dimension) of ternary $1$-perfect codes, which was done for binary codes in~\cite{ASH:RankKernel}.
Another challenge is the problem 
of existence of an $(n=\frac{q^m-1}{q-1}-1,q^{n-m},3)_q$ code that is not 
a shortened $1$-perfect code. Such codes were found for $q=4$~\cite{SWK:2022},
but the ternary case, including the parameters
$(12,3^9,3)_3$, remains unsolved.
Finally, agreeing that the binary case is the most interesting
among $q$-ary $1$-perfect codes,
we believe that the ternary \mbox{$1$-per}\-fect codes also deserve the separate study.

\section*{Data availability}
The dataset containing the results
of the classifications described in Sections~\ref{ss:RM},
\ref{ss:RM9},
\ref{ss:p9}, and~\ref{ss:cc}
is available in the IEEE DataPort repository~\cite{Perfect-related}.

\section*{Acknowledgements}
The authors thank Vladimir Potapov and Merc\`e Villanueva for useful discussions and
the Siberian Supercomputer Center
(SSCC ICMMG SB RAS)
for provided computational resources.
%

\begin{thebibliography}{10}

\bibitem{AssmKey92}
E.~F. Assmus, Jr. and J.~D. Key.
\newblock {\em Designs and Their Codes}.
\newblock Cambridge University Press, Cambridge, 1992.
\newblock \DOI{10.1017/CBO9781316529836}.

\bibitem{AvgHedSol:class}
S.~V. Avgustinovich, O.~Heden, and F.~I. Solov'eva.
\newblock The classification of some perfect codes.
\newblock {\em \href{http://link.springer.com/journal/10623}{Des. Codes
  Cryptography}}, 31(3):313--318, 2004.
\newblock \DOI{10.1023/B:DESI.0000015891.01562.c1}.

\bibitem{ASH:RankKernel}
S.~V. Avgustinovich, F.~I. Solov'eva, and O.~Heden.
\newblock On ranks and kernels problem of perfect codes.
\newblock {\em \href{http://link.springer.com/journal/11122}{Probl. Inf.
  Transm.}}, 39(4):341--345, 2003.
\newblock \DOI{10.1023/B:PRIT.0000011272.10614.8c} translated from \em
  \href{http://www.mathnet.ru/php/journal.phtml?jrnid=ppi\&option_lang=eng}{Probl.
  Peredachi Inf.}\em , 39(4):30--34, 2003.

\bibitem{CLVZ:tiling}
G.~Cohen, S.~Litsyn, A.~Vardy, and G.~Z\'emor.
\newblock Tilings of binary spaces.
\newblock {\em \href{http://epubs.siam.org/journal/sjdmec}{SIAM J. Discrete
  Math.}}, 9(3):393--412, 1996.
\newblock \DOI{10.1137/S0895480195280137}.

\bibitem{CHLL}
G.~D. Cohen, I.~Honkala, S.~Litsyn, and A.~Lobstein.
\newblock {\em Covering Codes}, volume~54 of {\em North-Holland Mathematical
  Library}.
\newblock North-Holland Publ. Co., Amsterdam, 1997.

\bibitem{ColMat:Steiner}
C.~J. Colbourn and R.~Mathon.
\newblock Steiner systems.
\newblock In C.~J. Colbourn and J.~H. Dinitz, editors, {\em Handbook of
  Combinatorial Designs}, Discrete Mathematics and Its Applications, pages
  102--108. Chapman \& Hall/CRC, Boca Raton, London, New York, second edition,
  2006.

\bibitem{Dinitz:06}
M.~Dinitz.
\newblock Full rank tilings of finite abelian groups.
\newblock {\em \href{http://epubs.siam.org/journal/sjdmec}{SIAM J. Discrete
  Math.}}, 20(1):160--170, 2006.
\newblock \DOI{10.1137/S0895480104445794}.

\bibitem{EV:98}
T.~Etzion and A.~Vardy.
\newblock On perfect codes and tilings: Problems and solutions.
\newblock {\em \href{http://epubs.siam.org/journal/sjdmec}{SIAM J. Discrete
  Math.}}, 11(2):205--223,  1998.
\newblock \DOI{10.1137/S0895480196309171}.

\bibitem{Heden:77}
O.~Heden.
\newblock A new construction of group and nongroup perfect codes.
\newblock {\em
  \href{http://www.sciencedirect.com/science/journal/00199958}{Inf. Control}},
  34(4):314--323,  1977.
\newblock \DOI{10.1016/S0019-9958(77)90372-2}.

\bibitem{HK:q-ary}
O.~Heden and D.~S. Krotov.
\newblock On the structure of non-full-rank perfect $q$-ary codes.
\newblock {\em \href{http://aimsciences.org/journals/amc/}{Adv. Math.
  Commun.}}, 5(2):149--156, 2011.
\newblock \DOI{10.3934/amc.2011.5.149}.

\bibitem{KPCh:2019}
J.~L. Kim, J.~Park, and S.~Choi.
\newblock Steganographic schemes from perfect codes on {C}ayley graphs.
\newblock {\em \href{http://link.springer.com/journal/10623}{Des. Codes
  Cryptography}}, 87(10):1573--7586, 2019.
\newblock \DOI{10.1007/s10623-019-00624-x}.

\bibitem{Perfect-related}
D.~Krotov.
\newblock Perfect and related codes (dataset). IEEE Dataport. 2022. 
\url{https://dx.doi.org/10.21227/w856-4b70}

\bibitem{Kro:OA13}
D.~S. Krotov.
\newblock On the {OA}(1536,13,2,7) and related orthogonal arrays.
\newblock {\em
  \href{http://www.sciencedirect.com/science/journal/0012365X}{Discrete
  Math.}}, 343:111659/1--11, 2020.
\newblock \DOI{10.1016/j.disc.2019.111659}.

\bibitem{Krotov:3tiling}
D.~S. Krotov.
\newblock Projective tilings and full-rank perfect codes.
\newblock {\em \href{http://link.springer.com/journal/10623}{Des. Codes
  Cryptography}}, 2022.
\newblock Submitted.

\bibitem{KroPot:4}
D.~S. Krotov and V.~N. Potapov.
\newblock $n$-{A}ry quasigroups of order $4$.
\newblock {\em \href{http://epubs.siam.org/journal/sjdmec}{SIAM J. Discrete
  Math.}}, 23(2):561--570, 2009.
\newblock \DOI{10.1137/070697331}.

\bibitem{KroPot:swi}
D.~S. Krotov and V.~N. Potapov.
\newblock On switching equivalence of $n$-ary quasigroups of order $4$ and
  perfect binary codes.
\newblock {\em \href{http://link.springer.com/journal/11122}{Probl. Inf.
  Transm.}}, 46(3):219--224, 2010.
\newblock \DOI{10.1134/S0032946010030026}
translated from \em
  \href{http://www.mathnet.ru/php/journal.phtml?jrnid=ppi\&option_lang=eng}{Probl.
  Peredachi Inf.}\em , 46(3):22--28, 2010.

\bibitem{LaywineMullen}
C.~F. Laywine and G.~L. Mullen.
\newblock {\em Discrete Mathematics Using Latin Squares}.
\newblock Wiley, New York, 1998.


\bibitem{Lindstrom69}
B.~Lindstr{\"o}m.
\newblock On group and nongroup perfect codes in {$q$} symbols.
\newblock {\em \href{http://www.mscand.dk}{Math. Scand.}}, 25:149--158, 1969.

\bibitem{Los06}
A.~V. Los'.
\newblock Construction of perfect $q$-ary codes by switchings of simple
  components.
\newblock {\em \href{http://link.springer.com/journal/11122}{Probl. Inf.
  Transm.}}, 42(1):30--37, 2006.
\newblock \DOI{10.1134/S0032946006010030} translated from
  \href{http://www.mathnet.ru/php/journal.phtml?jrnid=ppi\&option_lang=eng}{Probl.
  Peredachi Inf.} 42(1):34--42, 2006.

\bibitem{Mal2010:q}
S.~A. Malyugin.
\newblock On nonsystematic perfect codes over finite fields.
\newblock {\em
  \href{http://link.springer.com/journal/volumesAndIssues/11754}{J. Appl. Ind.
  Math.}}, 4(2):218--230, 2010.
\newblock \DOI{10.1134/S1990478910020110} translated from
  \href{http://www.mathnet.ru/php/journal.phtml?jrnid=da\&option_lang=eng}{Diskretn.
  Anal. Issled. Oper.}, Ser.~1 16(1):44--63, 2009.

\bibitem{Mollard:84:une}
M.~Mollard.
\newblock Une nouvelle famille de $3$-codes parfaits sur {GF$(q)$}.
\newblock {\em
  \href{http://www.sciencedirect.com/science/journal/0012365X}{Discrete
  Math.}}, 49(2):209--212, 1984.
\newblock \DOI{10.1016/0012-365X(84)90121-3}.

\bibitem{OstSza:2007}
P.~J.~R. {\"O}sterg{\aa}rd and S.~Szab\'o.
\newblock Elementary $p$-groups with the {R}\'edei property.
\newblock {\em International Journal of Algebra and Computation},
  17(1):171--178, 2007.
\newblock \DOI{10.1142/S0218196707003536}.

\bibitem{OstVar:2004}
P.~J.~R. \"Osterg{\aa}rd and A.~Vardy.
\newblock Resolving the existence of full-rank tilings of binary hamming
  spaces.
\newblock {\em \href{http://epubs.siam.org/journal/sjdmec}{SIAM J. Discrete
  Math.}}, 18(2):382--387, 2004.
\newblock \DOI{10.1137/S0895480104435804}.

\bibitem{OstPot:15}
P.~R.~J. {\"O}sterg{\aa}rd and O.~Pottonen.
\newblock The perfect binary one-error-correcting codes of length $15$: Part
  {I}---classification.
\newblock {\em
  \href{http://ieeexplore.ieee.org/xpl/RecentIssue.jsp?punumber=18}{IEEE Trans.
  Inf. Theory}}, 55(10):4657--4660, 2009.
\newblock \DOI{10.1109/TIT.2009.2027525}.

\bibitem{OPP:15}
P.~R.~J. {\"O}sterg{\aa}rd, O.~Pottonen, and K.~T. Phelps.
\newblock The perfect binary one-error-correcting codes of length $15$: Part
  {II}---properties.
\newblock {\em
  \href{http://ieeexplore.ieee.org/xpl/RecentIssue.jsp?punumber=18}{IEEE Trans.
  Inf. Theory}}, 56(6):2571--2582, 2010.
\newblock \DOI{10.1109/TIT.2010.2046197}.

\bibitem{Phelps:83}
K.~T. Phelps.
\newblock A combinatorial construction of perfect codes.
\newblock {\em \href{http://epubs.siam.org/journal/sjamdu}{SIAM J. Algebraic
  Discrete Methods}}, 4(3):398--403, 1983.
\newblock \DOI{10.1137/0604040}.

\bibitem{Phelps:2000}
K.~T. Phelps.
\newblock An enumeration of 1-perfect binary codes.
\newblock {\em \href{http://ajc.maths.uq.edu.au}{Australas. J. Comb.}},
  21:287--298, 2000.

\bibitem{PheVil:q-kernel}
K.~T. Phelps, J.~Rif\`a, and M.~Villanueva.
\newblock Kernels and $p$-kernels of $p^r$-ary $1$-perfect codes.
\newblock {\em \href{http://link.springer.com/journal/10623}{Des. Codes
  Cryptography}}, 37(2):243--261, Nov. 2005.
\newblock \DOI{10.1007/s10623-004-3989-x}.

\bibitem{PheVil:2002:q}
K.~T. Phelps and M.~Villanueva.
\newblock Ranks of $q$-ary $1$-perfect codes.
\newblock {\em \href{http://link.springer.com/journal/10623}{Des. Codes
  Cryptography}}, 27(1--2):139--144, 2002.
\newblock \DOI{10.1023/A:1016510804974}.

\bibitem{RRR:2011}
H.~Rif\`a-Pous, J.~Rif\`a, and L.~Ronquillo.
\newblock {$Z_2Z_4$}-additive perfect codes in steganography.
\newblock {\em \href{http://aimsciences.org/journals/amc/}{Adv. Math.
  Commun.}}, 5(3):425--433, 2011.
\newblock \DOI{10.3934/amc.2011.5.425}.

\bibitem{Romanov:2019}
A.~M. Romanov.
\newblock On non-full-rank perfect codes over finite fields.
\newblock {\em \href{http://link.springer.com/journal/10623}{Des. Codes
  Cryptography}}, 87(5):995--1003, 2019.
\newblock \DOI{10.1007/s10623-018-0506-1}.

\bibitem{Romanov:concat}
A.~M. Romanov.
\newblock On perfect and {R}eed--{M}uller codes over finite fields.
\newblock {\em \href{http://link.springer.com/journal/11122}{Probl. Inf.
  Transm.}}, 57(3):199--211, 2021.
\newblock \DOI{10.1134/S0032946021030017}, translated from
  \href{http://www.mathnet.ru/php/journal.phtml?jrnid=ppi\&option_lang=eng}{Probl.
  Peredachi Inf.} 57(3):3--16, 2021. 

\bibitem{Romanov:2022}
A.~M. Romanov.
\newblock On the number of q-ary quasi-perfect codes with covering radius 2.
\newblock {\em \href{http://link.springer.com/journal/10623}{Des. Codes
  Cryptography}}, 90(8):1713--1719,  2022.
\newblock \DOI{10.1007/s10623-022-01070-y}.

\bibitem{Schonheim68}
J.~Sch{\"o}nheim.
\newblock On linear and nonlinear single-error-correcting $q$-ary perfect
  codes.
\newblock {\em
  \href{http://www.sciencedirect.com/science/journal/00199958}{Inf. Control}},
  12(1):23--26, 1968.
\newblock \DOI{10.1016/S0019-9958(68)90167-8}.


\bibitem{SHK:additive}
M.~Shi, D.~Huang, and D.~S. Krotov.
\newblock Additive perfect codes in {D}oob graphs.
\newblock {\em \href{http://link.springer.com/journal/10623}{Des. Codes
  Cryptography}}, 87(8):1857--1869, 2019.
\newblock \DOI{10.1007/s10623-018-0586-y}.

\bibitem{SKS:drg}
M.~Shi, D.~S. Krotov, and P.~Sol\'e.
\newblock A new distance-regular graph of diameter $3$ on $1024$ vertices.
\newblock {\em \href{http://link.springer.com/journal/10623}{Des. Codes
  Cryptography}}, 87(9):2091--2101, 2019.
\newblock \DOI{10.1007/s10623-019-00609-w}.


\bibitem{Wu:additive}
M.~Shi, R.~Wu, and D.~S. Krotov.
\newblock On {$Z_p Z_{p^k}$}-additive codes and their duality.
\newblock {\em
  \href{http://ieeexplore.ieee.org/xpl/RecentIssue.jsp?punumber=18}{IEEE Trans.
  Inf. Theory}}, 65(6):3841--3847, 2019.
\newblock \DOI{10.1109/TIT.2018.2883759}.

\bibitem{SWK:2022}
M.~Shi, R.~Wu, and D.~S. Krotov.
\newblock On $q$-ary shortened-$1$-perfect-like codes.
\newblock {\em
  \href{http://ieeexplore.ieee.org/xpl/RecentIssue.jsp?punumber=18}{IEEE Trans.
  Inf. Theory}}, 68(11):7100--7106,  2022.
\newblock \DOI{10.1109/TIT.2022.3187004}.

\bibitem{Sol:81}
F.~I. Solov'eva.
\newblock On binary nongroup codes.
\newblock In {\em Methods of Discrete Analysis in the Study of {B}oolean
  Functions and Graphs}, volume~37 of {\em Metody Diskret. Analiz.}, pages
  65--76. Institute of Mathematics, Novosibirsk, 1981.
\newblock In Russian.

\bibitem{Szabo:2008}
S.~Szab\'o.
\newblock Nonfull-rank factorizations of elementary $3$-groups.
\newblock \href{http://www.ieja.net}{\em Int. Electron. J. Algebra}, 3:96--102,
  2008.

\bibitem{GAP}
{The GAP~Group}.
\newblock {GAP -- Groups, Algorithms, and Programming, Version 4.6.4}, 2013.
\newblock \url{http://www.gap-system.org}.

\bibitem{TraVar:2003}
A.~Trachtenberg and A.~Vardy.
\newblock Full-rank tilings of {$F_2^8$} do not exist.
\newblock {\em \href{http://epubs.siam.org/journal/sjdmec}{SIAM J. Discrete
  Math.}}, 16(3):390--392, 2003.
\newblock \DOI{10.1137/S0895480102410213}.

\bibitem{Vas:nongroup_perfect.de}
J.~L. Wasiljew.
\newblock {\"U}ber dicht gepackte nichtgruppen-codes.
\newblock In {\em Probleme der Kybernetik}, volume~8, pages 375--378.
  Akademie-Verlag, 1965.
\newblock Translated from Problemy Kibernetiki 8:337--339, 1962.

\bibitem{ZhangLi:2008}
W.~Zhang and Sh. Li.
\newblock A coding problem in steganography.
\newblock {\em \href{http://link.springer.com/journal/10623}{Des. Codes
  Cryptography}}, 46(1):67--81,  2008.
\newblock \DOI{10.1007/s10623-007-9135-9}.

\bibitem{Zin1976:GCC}
V.~A. Zinoviev.
\newblock Generalized concatenated codes.
\newblock {\em \href{http://link.springer.com/journal/11122}{Probl. Inf.
  Transm.}}, 12(1):2--9, 1976.
\newblock Translated from \em Probl. Peredachi Inf.\em, 12(1):5--15, 1976.

\bibitem{ZZ:2002}
V.~A. Zinoviev and D.~V. Zinoviev.
\newblock Binary extended perfect codes of length 16 by the generalized
  concatenated construction.
\newblock {\em \href{http://link.springer.com/journal/11122}{Probl. Inf.
  Transm.}}, 38(4):296--322, 2002.
\newblock \DOI{10.1023/A:1022049912988} translated from
  \href{http://www.mathnet.ru/php/journal.phtml?jrnid=ppi\&option_lang=eng}{Probl.
  Peredachi Inf.} 38(4):56--84, 2002.

\bibitem{ZZ:2004}
V.~A. Zinoviev and D.~V. Zinoviev.
\newblock Binary perfect codes of length 15 by the generalized concatenated
  construction.
\newblock {\em \href{http://link.springer.com/journal/11122}{Probl. Inf.
  Transm.}}, 40(1):25--36, 2004.
\newblock \DOI{10.1023/A:1022049912988} translated from
  \href{http://www.mathnet.ru/php/journal.phtml?jrnid=ppi\&option_lang=eng}{Probl.
  Peredachi Inf.} 40(1):27--39, 2004.

\bibitem{ZZ:2006:l16r14}
V.~A. Zinoviev and D.~V. Zinoviev.
\newblock Binary extended perfect codes of length 16 and rank 14.
\newblock {\em \href{http://link.springer.com/journal/11122}{Probl. Inf.
  Transm.}}, 42(2):123--138, 2006.
\newblock \DOI{10.1134/S0032946006020062} translated from
  \href{http://www.mathnet.ru/php/journal.phtml?jrnid=ppi\&option_lang=eng}{Probl.
  Peredachi Inf.} 42(2):63--80, 2006.

\end{thebibliography}

\providecommand\href[2]{#2} \providecommand\url[1]{\href{#1}{#1}}
  \def\DOI#1{{\small {DOI}:
  \href{http://dx.doi.org/#1}{#1}}}\def\DOIURL#1#2{{\small{DOI}:
  \href{http://dx.doi.org/#2}{#1}}}

\end{document}